\theoremstyle{plain}
\newtheorem{thm}{Theorem}[section]
\newtheorem{lemma}[thm]{Lemma}
\newtheorem{cor}[thm]{Corollary}
\newtheorem{prop}[thm]{Proposition}
\newtheorem{conj}[thm]{Conjecture}
\theoremstyle{definition}
\newtheorem{defn}[thm]{Definition}
\newtheorem{remark}[thm]{Remark}
\numberwithin{equation}{section}
\title{Conjectures for the Integral Moments and Ratios of L-functions in Even characteristic}
\author{J. MacMillan}
\date{}
\newcommand\blfootnote[1]{%
  \begingroup
  \renewcommand\thefootnote{}\footnote{#1}%
  \addtocounter{footnote}{-1}%
  \endgroup
  }
\begin{document}
\maketitle
\blfootnote{2010 Mathematics Subject Classification: Primary 11M38; Secondary 11M06, 11G20 \\Date: September 30, 2021\\
Key Words: finite Fields, function fields, even characteristic, conjectures for moments of dirichlet L-functions, ratios conjecture, one level density}
\par\noindent
ABSTRACT: In this paper, we extend to the function field setting the heuristics developed by Conrey, Farmer, Keating, Rubinstein and Snaith for the integral moments of L-functions. Also, we adapt to function field setting  the heuristics first developed by Conrey, Farmer and Zirnbauer to the study of mean values of ratios of L-functions. Specifically, we obtain an asymptotic formula for the integral moments and ratios of the quadratic Dirichlet L-functions $L(s,\chi_u)$ over the rational function field $\mathbb{F}_q(T)$, when $q$ is a power of 2 and over a given family. As an application, we calculate the one-level density for the zeros of these L-functions. 
\section{Introduction}
An important and well-studied problem in Analytic Number Theory is to understand the asymptotic behaviour of moments of families of L-functions. In the case of the Riemann-zeta function, the problem is to understand the asymptotic behaviour of 
\begin{equation}
    M_k(T)=\int_0^T\left|\zeta\left(\frac{1}{2}+it\right)\right|^{2k}dt,
\end{equation}
as $T\rightarrow\infty$. In this context, Hardy and Littlewood \cite{Hardy1916} established that
\begin{equation*}
    M_1(T)\sim T\log T
\end{equation*}
and Ingham \cite{Ingham1926} established
\begin{equation*}
    M_2(T)\sim\frac{1}{2\pi^2}T\log^4T.
\end{equation*}
For $k\geq 3$, the problem is still open. However it is believed that
\begin{equation*}
    M_k(T)\sim c_kT(\log T)^{k^2}
\end{equation*}
for a positive constant $c_k$. Conrey and Ghosh \cite{ConreyGhosh1992} presented the constant $c_k$ in a more explicit form and Keating and Snaith \cite{Keating2000a} conjectured a precise formula for $c_k$ for $\Re(s)>-\frac{1}{2}$ based on the analogy with the characteristic polynomials of random matrices. Conrey, Farmer, Keating, Rubinstein and Snaith \cite{Conrey2008} described an algorithm for obtaining explicit expressions for lower terms for the conjectured full asymptotics of the moments of the Riemann zeta function.   \\
\par\noindent
In a recent series of papers, Conrey and Keating \cite{ConreyKeatingMomentsofzetaI,ConreyKeatingMomentsofzetaII,ConreyKeatingMomentsofzetaIII,ConreyKeatingMomentsofzetaIV,ConreyKeatingMomentsofzetaV} examined the calculations of the $2k$th moment and shifted moments of the Riemann zeta function on the critical line from a number theoretic perspective. Their method was to approximate $\zeta(s)^k$ by a long Dirichlet polynomial and then to compute the mean square of the Dirichlet polynomial. In particular, the series of papers takes care of the off diagonal terms and explain their role towards the contribution of the main terms in the asymptotic formula. \\
\par\noindent
For the family of Dirichlet L-functions $L(s,\chi_d)$ with $\chi_d$ a real primitive Dirichlet character modulo $d$ defined by the Jacobian symbol $\chi_d(n)=\left(\frac{d}{n}\right)$, the problem is to understand the asymptotic behaviour of 
\begin{equation}
    \sum_{0<d\leq D} L\left(\frac{1}{2},\chi_d\right)^k,
\end{equation}
summing over fundamental discriminants $d$, as $D\rightarrow\infty$. In this context, Jutila \cite{Jutila1981} proved that
\begin{equation}
    \sum_{0<d\leq D}L\left(\frac{1}{2},\chi_d\right)=\frac{P(1)}{4\zeta(2)}D\left[\log\left(\frac{D}{\pi}\right)+\frac{\Gamma'}{\Gamma}\left(\frac{1}{4}\right)+4\gamma-1+4\frac{P'}{P}(1)\right]+O(D^{\frac{3}{4}+\epsilon}),
\end{equation}
and 
\begin{equation}
    \sum_{0<d\leq D}L\left(\frac{1}{2},\chi_d\right)^2=\frac{c}{\zeta(2)}D\log^3D+O(D(\log D)^{\frac{5}{2}+\epsilon}),
\end{equation}
for all $\epsilon>0$ with
\begin{equation*}
    P(s)=\prod_p\left(1-\frac{1}{p^s(p+1)}\right)
\end{equation*}
and
\begin{equation*}
    c=\frac{1}{48}\prod_p\left(1-\frac{4p^2-3p+1}{p^4+p^3}\right).
\end{equation*}
When restricting $d$ to be odd, square-free and positive, so that $\chi_{8d}$ are real, primitive characters with conductor $8d$ and with $\chi_{8d}(-1)=1$, Soundarajan \cite{Soundararajan2000NonvanishingS=1/2} proved that there exists polynomials $P_2$ and $P_3$ of degree 3 and 6 respectively such that
\begin{equation*}
    \sum_{0<d\leq D}L\left(\frac{1}{2},\chi_{8d}\right)^2=DP_2(\log D)+O(D^{\frac{5}{6}+\epsilon})
\end{equation*}
and 
\begin{equation}\label{eq:1.5}
    \sum_{0<d\leq D}L\left(\frac{1}{2},\chi_{8d}\right)^3=DP_3(\log D)+O(D^{\frac{11}{12}+\epsilon}),
\end{equation}
where the sum is over fundamental discriminants $8d$. Using multiple Dirichlet series Diaconu, Goldfeld and Hoffstein  \cite{Diaconu2004} improved the error term of (\ref{eq:1.5}) to $D^{\theta+\epsilon}$ for some explicit constant $\theta$. Furthermore Diaconu and Whitehead \cite{Diaconu2018} proved, for the third moment, that there exists a secondary main term of size $D^{\frac{3}{4}}$, whilst showing that the error term is of the order $O(D^{\frac{2}{3}+\delta})$ for every $\delta>0$. For the fourth moment Shen \cite{Shen2021} proved, under the assumption of the Generalised Riemann hypothesis for $L(s,\chi_d)$ for all fundamental discriminants $d$ and the Riemann hypothesis for $\zeta(s)$, that
\begin{equation}
    \sum_{\substack{0<d\leq D\\(d,2)=1}}L\left(\frac{1}{2},\chi_{8d}\right)^4=CD(\log D)^{10}+O(D(\log D)^{\frac{39}{4}+\epsilon}),
\end{equation}
for some positive constant $C$.\\
\par\noindent
For the sum over fundamental discriminants $d$, it is conjectured that
\begin{equation*}
    \sum_{0<d\leq D}L\left(\frac{1}{2},\chi_{d}\right)^k\sim C_kD(\log D)^{\frac{k(k+1)}{2}}.
\end{equation*}
 Extending their approach to the zeta-function moments, Keating and Snaith \cite{Keating2000} conjectured a precise value for $C_k$.\\
 \par \noindent
 Conrey, Farmer, Keating, Rubinstein and Snaith \cite{Conrey2005} presented a new heuristic to conjecture all principal lower order terms. For the family of quadratic Dirichlet L-functions, they conjectured the following.
 \begin{conj}
 Let $X_d(s)=|d|^{\frac{1}{2}-s}X(s,a)$, where $a=0$ if $d>0$ and $a=1$ if $d<0$ and
 \begin{equation*}
     X(s,a)=\pi^{s-\frac{1}{2}}\frac{\Gamma\left(\frac{1+a-s}{2}\right)}{\Gamma\left(\frac{s+a}{2}\right)}.
 \end{equation*}
 That is $X_d(s)$ is the factor in the functional equation for the quadratic Dirichlet L-function
 \begin{equation*}
     L(s,\chi_d)=\epsilon_dX_d(s)L(1-s,\chi_d).
 \end{equation*}
 Summing over fundamental discriminant $d$
 \begin{equation}
     \sum_{d}L\left(\frac{1}{2},\chi_d\right)^k=\sum_dQ_k(\log|d|)(1+o(1)),
 \end{equation}
 where $Q_k$ is the polynomial of degree $\frac{1}{2}k(k+1)$ given by the $k$-fold residue
 \begin{equation}
     Q_k(x)=\frac{(-1)^{\frac{k(k-1)}{2}}2^k}{k!}\frac{1}{(2\pi i)^k}\oint\dotsc\oint\frac{G(z_1,\dotsc,z_k)\Delta(z_1^2,\dotsc,z_k^2)^2}{\prod_{j=1}^kz_j^{2k-1}}e^{\frac{x}{2}\sum_{j=1}^kz_j}dz_1\dotsc dz_k,
 \end{equation}
 with
 \begin{equation*}
     G(z_1,\dotsc,z_k)=A_k(z_1,\dotsc,z_k)\prod_{j=1}^kX\left(\frac{1}{2}+z_j,a\right)^{-\frac{1}{2}}\prod_{1\leq i\leq j\leq k}\zeta(1+z_i+z_j),
 \end{equation*}
 $\Delta(z_1,\dotsc,z_k)$ is the Vandermonde determinant given by
 \begin{equation}\label{eq:1.9}
     \Delta(z_1,\dotsc,z_k)=\prod_{1\leq i<j\leq k}(z_j-z_i),
 \end{equation}
 and $A_k$ is the Euler product, absolutely convergent for $|\Re(z_j)|<\frac{1}{2}$ defined by
 \begin{align*}
     A_k(z_1,\dotsc,z_k)&=\prod_p\prod_{1\leq i\leq j\leq k}\left(1-\frac{1}{p^{1+z_i+z_j}}\right)\\
     &\times \left(\frac{1}{2}\left(\prod_{j=1}^k\left(1-\frac{1}{p^{\frac{1}{2}+z_j}}\right)^{-1}+\prod_{j=1}^k\left(1+\frac{1}{p^{\frac{1}{2}+z_j}}\right)^{-1}\right)+\frac{1}{p}\right)\left(1+\frac{1}{p}\right)^{-1}.
 \end{align*}
 \end{conj}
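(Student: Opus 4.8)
Since this is a conjecture, the aim is to reproduce it via the recipe of Conrey, Farmer, Keating, Rubinstein and Snaith rather than to give a rigorous proof; the terms one discards en route are precisely what stands in the way of a theorem. The plan is to work throughout with the shifted moment
\[
\sum_{d}\prod_{j=1}^k L\!\left(\tfrac12+z_j,\chi_d\right)
\]
and to recover $\sum_d Q_k(\log|d|)$ at the end by letting $z_1,\dots,z_k\to 0$ and reading off the residue at the origin. First I would replace each factor $L(\tfrac12+z_j,\chi_d)$ by its approximate functional equation, i.e.\ a main Dirichlet polynomial plus a dual sum generated by $L(s,\chi_d)=\epsilon_d X_d(s)L(1-s,\chi_d)$, so that the dual factor carries the weight $X_d(\tfrac12+z_j)$ and the reflected shift $-z_j$.

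Next I would multiply out the product of these $k$ ``main $+$ dual'' factors into $2^k$ pieces, indexed by the subset $S\subseteq\{1,\dots,k\}$ of factors taken in dual form, and interchange this finite sum with the sum over the family. For fixed $S$ the inner object is an average of $\chi_d(n)$ over fundamental discriminants $d\le D$, and the decisive arithmetic input is that on average this behaves like the indicator of $n$ being a perfect square, times a positive density; the non-square contributions form the off-diagonal error. Summing the resulting Dirichlet series over squares and collecting local factors prime by prime gives, for the piece $S=\emptyset$, the product $\prod_{1\le i\le j\le k}\zeta(1+z_i+z_j)$ times an absolutely convergent Euler product which one verifies, factor by factor, to be exactly the $A_k(z_1,\dots,z_k)$ of the statement; each of the remaining $2^k-1$ pieces produces the same shape with the shifts $z_j$, $j\in S$, negated and an extra factor $\prod_{j\in S}X_d(\tfrac12+z_j)$. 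Replacing the surviving arithmetic sums by their leading asymptotics — the one step the recipe does not attempt to justify — turns the family average into $\sum_d$ times a fixed meromorphic function of $z_1,\dots,z_k$.

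The final step is purely formal. The sum of the $2^k$ pieces is a symmetric sum over the sign changes of $(z_1,\dots,z_k)$, and the combinatorial lemma underlying the CFKRS formalism identifies it with the $k$-fold contour integral
\[
Q_k(x)=\frac{(-1)^{\frac{k(k-1)}{2}}2^k}{k!}\frac{1}{(2\pi i)^k}\oint\cdots\oint\frac{G(z_1,\dots,z_k)\,\Delta(z_1^2,\dots,z_k^2)^2}{\prod_{j=1}^k z_j^{2k-1}}\,e^{\frac{x}{2}\sum_{j=1}^k z_j}\,dz_1\cdots dz_k ,
\]
where the Vandermonde-squared kernel $\Delta(z_1^2,\dots,z_k^2)^2/\prod_j z_j^{2k-1}$ together with the constant $2^k/k!$ encodes the symmetry group of order $2^k k!$ generated by permutations and sign changes of the shifts, and the factor $e^{\frac x2\sum_j z_j}$ records the conductor dependence through $\prod_j X_d(\tfrac12+z_j)^{-1/2}=|d|^{\frac12\sum_j z_j}\prod_j X(\tfrac12+z_j,a)^{-1/2}$, which accounts for the $\prod_j X(\tfrac12+z_j,a)^{-1/2}$ inside $G$. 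Since the integrand has a pole of order $\tfrac12 k(k+1)$ at the origin coming from the factors $\zeta(1+z_i+z_j)$, the residue is a polynomial in $x=\log|d|$ of that degree, which is the $Q_k$ of the statement.

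The main obstacle is the step already flagged: averaging $\chi_d(n)$ over the family separates into the perfect-square diagonal, which produces the stated main term, and an off-diagonal remainder whose cancellation is unknown in general (it has been established only for small $k$), and likewise the tails of the $k$ approximate functional equations must be shown negligible. As neither can be controlled unconditionally, the argument yields only the heuristic identity $\sum_d L(\tfrac12,\chi_d)^k=\sum_d Q_k(\log|d|)(1+o(1))$, which is exactly what the conjecture asserts.
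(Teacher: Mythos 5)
Your sketch follows exactly the CFKRS recipe that this conjecture comes from and that the paper itself reproduces (in its even-characteristic function-field adaptation in Section 4): approximate functional equation, expansion into $2^k$ sign-pattern pieces, replacement of the character average by its diagonal (perfect-square) expected value to produce the Euler product $A_k$ times $\prod_{1\le i\le j\le k}\zeta(1+z_i+z_j)$, and the contour-integral lemma converting the sum over sign changes into the $k$-fold residue $Q_k$. The approach and all key ingredients match the paper's treatment, and you correctly flag that the unjustified steps (off-diagonal cancellation and the tails of the approximate functional equations) are what keep this a conjecture rather than a theorem.
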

 \par\noindent
 Conrey, Farmer and Zirnbauer \cite{Conrey2008a} presented a generalisation of the heuristic arguments used in \cite{Conrey2005} leading to conjectures for the ratios of products of L-functions. These conjectures are very useful since it is possible to obtain from them all $n$-level correlations of zeros with lower order terms. For the family of Dirichlet L-functions they produced the following conjecture.
 \begin{conj}
 Let $\mathcal{D}^+=\{L(s,\chi_d):d>0\}$ to be symplectic family of L-functions associated with the quadratic character $\chi_d$, and suppose that the real parts of $\alpha_k$ and $\gamma_q$ are positive. Then
 \begin{align*}
     &\sum_{0<d\leq D}\frac{\prod_{k=1}^KL\left(\frac{1}{2}+\alpha_k,\chi_d\right)}{\prod_{q=1}^QL\left(\frac{1}{2}+\gamma_q,\chi_d\right)},\\
     &=\sum_{0<d\leq D}\sum_{\epsilon\in\{-1,1\}^K}\left(\frac{|d|}{\pi}\right)^{\frac{1}{2}\sum_{k=1}^K(\epsilon_k\alpha_k-\alpha_k)}\\
     &\times\prod_{k=1}^Kg_+\left(\frac{1}{2}+\frac{\alpha_k-\epsilon_k\alpha_k}{2}\right)Y_{\mathfrak{D}^+}(\epsilon_1\alpha_1,\dotsc,\epsilon_K\alpha_K;\gamma)A_{\mathfrak{D^+}}(\epsilon_1\alpha_1,\dotsc,\epsilon_K\alpha_K;\gamma)+o(D),
\end{align*}
where 
\begin{equation*}
    g_+(s)=\frac{\Gamma\left(\frac{1-s}{2}\right)}{\Gamma\left(\frac{s}{2}\right)},
\end{equation*}
\begin{equation*}
    Y_{\mathfrak{D^+}}(\alpha;\gamma)=\frac{\prod_{1\leq j\leq k\leq K}\zeta(1+\alpha_j+\alpha_k)\prod_{1\leq q<r\leq Q}\zeta(1+\gamma_q+\gamma_r)}{\prod_{k=1}^K\prod_{q=1}^Q\zeta(1+\alpha_k+\gamma_q)}
\end{equation*}
and
\begin{align*}
    A_{\mathfrak{D}^+}(\alpha;\gamma)&=\prod_p\frac{\prod_{1\leq j\leq k\leq K}\left(1-\frac{1}{p^{1+\alpha_j+\alpha_k}}\right)\prod_{1\leq q<r\leq Q}\left(1-\frac{1}{p^{1+\gamma_q+\gamma_r}}\right)}{\prod_{k=1}^K\prod_{q=1}^Q\left(1-\frac{1}{p^{1+\alpha_k+\gamma_q}}\right)}\\
    &\times\left(1+\left(1+\frac{1}{p}\right)^{-1}\sum_{0<\sum_ka_k+\sum_qc_q\text{ is even}}\frac{\prod_{q=1}^Q\mu(p^{c_q})}{p^{\sum_ka_k\left(\frac{1}{2}+\alpha_k\right)+\sum_qc_q\left(\frac{1}{2}+\gamma_q\right)}}\right).
\end{align*}
\end{conj}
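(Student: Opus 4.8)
Since the statement is a conjecture rather than a theorem, what I would present is not a proof but the heuristic ``recipe'' of Conrey, Farmer and Zirnbauer that produces the right-hand side; the plan has three main stages, which I record together with the step that blocks a rigorous argument.

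\textbf{Set-up.} First I would use the functional equation $L(s,\chi_d)=\epsilon_dX_d(s)L(1-s,\chi_d)$, with $\epsilon_d=1$ for the family $\mathcal{D}^+$, to write each numerator factor through its approximate functional equation as a Dirichlet polynomial of shift $\alpha_k$ plus $X_d\left(\tfrac12+\alpha_k\right)$ times a Dirichlet polynomial of shift $-\alpha_k$; simultaneously I would expand each denominator factor as the M\"obius series $L\left(\tfrac12+\gamma_q,\chi_d\right)^{-1}=\sum_n\mu(n)\chi_d(n)n^{-1/2-\gamma_q}$.

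\textbf{The $2^K$ pieces and the family average.} Multiplying out the $K$ numerator factors yields $2^K$ terms indexed by $\epsilon\in\{-1,1\}^K$: on slot $k$, $\epsilon_k=+1$ keeps the shift $\alpha_k$ while $\epsilon_k=-1$ flips it to $-\alpha_k$ and carries the archimedean factor, whose average over $|d|\le D$ is $\left(|d|/\pi\right)^{-\alpha_k}g_+\left(\tfrac12+\alpha_k\right)$ up to lower order --- this is the source of the prefactor $\left(|d|/\pi\right)^{\frac12\sum_k(\epsilon_k\alpha_k-\alpha_k)}\prod_kg_+\left(\tfrac12+\tfrac{\alpha_k-\epsilon_k\alpha_k}{2}\right)$. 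Each term then leaves a multiple Dirichlet series in $m_1,\dots,m_K,n_1,\dots,n_Q$ weighted by $\chi_d(m_1\cdots m_Kn_1\cdots n_Q)$, with the effective shift on slot $k$ now $\epsilon_k\alpha_k$. Averaging over fundamental discriminants, this character sum is negligible unless $m_1\cdots m_Kn_1\cdots n_Q$ is a perfect square, and for such arguments the symplectic-family average of $\chi_d$ equals $\prod_{p\mid m_1\cdots n_Q}\left(1+\tfrac1p\right)^{-1}$ up to lower order. Keeping only these ``diagonal'' terms produces one arithmetic main term for each $\epsilon$.

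\textbf{Identifying $Y$ and $A$, and the obstacle.} Next I would factor each diagonal series into an Euler product: its ``generic'' local part is $\prod_{1\le j\le k\le K}\left(1-p^{-1-\epsilon_j\alpha_j-\epsilon_k\alpha_k}\right)^{-1}\prod_{1\le q<r\le Q}\left(1-p^{-1-\gamma_q-\gamma_r}\right)^{-1}\prod_{k,q}\left(1-p^{-1-\epsilon_k\alpha_k-\gamma_q}\right)$, which assembles over all primes into the quotient of zeta values $Y_{\mathfrak{D}^+}(\epsilon_1\alpha_1,\dots,\epsilon_K\alpha_K;\gamma)$; the residual factor is absolutely convergent near the origin, and I would verify term by term that its local factor is exactly the bracket in $A_{\mathfrak{D}^+}$, matching the sum over exponent vectors $(a_k,c_q)$ with $\sum_ka_k+\sum_qc_q$ even against the local square-condition imposed by the family average. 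Summing the $2^K$ pieces and then over $0<d\le D$ gives the asserted identity with error $o(D)$. The step that resists rigour is precisely the passage to the diagonal: one must show that the off-diagonal character sums, the tails of the approximate functional equations, and the cross terms between the two halves of each functional equation all contribute only $o(D)$ after summation over $d$ --- the recipe simply discards them, and making this unconditional is open even for small $K$ and $Q$. In lieu of a proof I would offer the standard consistency checks: the $Q=0$ specialisation recovers Conjecture 1.1, hence the symplectic random-matrix prediction and the moment polynomial $Q_k$, and in the function-field setting of this paper the diagonal can in several cases be isolated exactly, upgrading the heuristic to a theorem.
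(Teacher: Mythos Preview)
Your proposal is correct and matches the standard CFZ heuristic, but note that the paper itself offers no derivation of this statement: Conjecture~1.2 is quoted in the Introduction as background from Conrey, Farmer and Zirnbauer, with no accompanying argument. The recipe you outline---approximate functional equation for the numerator, M\"obius expansion for the denominator, the $2^K$ sign-pieces, the diagonal family average replacing $\chi_d(n)$ by $\prod_{p\mid n}(1+1/p)^{-1}$ on squares, and the factorisation into $Y_{\mathfrak{D}^+}\cdot A_{\mathfrak{D}^+}$---is exactly the procedure the paper carries out in Section~7 for its own function-field analogue (Conjecture~3.2), so your sketch aligns with the paper's methodology even though no proof of this particular classical conjecture appears there.
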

\par\noindent
Using the Ratios conjecture 1.2 Conrey and Snaith \cite{Conrey2007} computed the one-level density for zeros of quadratic Dirichlet L-functions complete with lower order terms. Namely, they considered the one-level density
\begin{equation*}
    S_1(f):=\sum_{d\leq D}\sum_{\gamma_d}f(\gamma_d),
\end{equation*}
where $f(z)$ is holomorphic throughout the strip $|\Im(z)|<2$ is real on the real line, $f(x)\ll\frac{1}{(1+x^2)}$ as $x\rightarrow\infty$ and $\gamma_d$ denotes the ordinate of a generic zero of $L(s,\chi_d)$ on the half line. For the family of Dirichlet L-functions they obtained the following result.
\begin{thm}
Assuming the Ratios conjecture 1.2 and $f$ satisfying the conditions given above, we have,
\begin{align*}
    S_1(f):&=\sum_{d\leq D}\sum_{\gamma_d}f(\gamma_d)=\frac{1}{2\pi}\int_{-\infty}^{\infty}f(t)\sum_{d\leq D}\Bigg(\log\frac{d}{\pi}+\frac{1}{2}\frac{\Gamma'}{\Gamma}\left(\frac{1}{4}+\frac{it}{2}\right)+\frac{1}{2}\frac{\Gamma}{\Gamma}\left(\frac{1}{4}-\frac{it}{2}\right)\\
    &+2\Bigg(\frac{\zeta'(1+2it)}{\zeta(1+2it)}+A_{\mathfrak{D}^+}'(it;it)-\left(\frac{d}{\pi}\right)^{-it}\frac{\Gamma\left(\frac{1}{4}-\frac{it}{2}\right)}{\Gamma\left(\frac{1}{4}+\frac{it}{2}\right)}\zeta(1-2it)A_{\mathfrak{D}^+}(-it,it)\Bigg)\Bigg)dt+o(D),
\end{align*}
where
\begin{equation*}
    A_{\mathfrak{D}^+}(-r;r)=\prod_p\left(1-\frac{1}{(p+1)p^{1-2r}}-\frac{1}{p+1}\right)\left(1-\frac{1}{p}\right)^{-1}
\end{equation*}
and
\begin{equation*}
    A'_{\mathfrak{D}^+}(r;r)=\sum_p\frac{\log p}{(p+1)(p^{1+2r}-1)}.
\end{equation*}
\end{thm}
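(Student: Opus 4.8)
The plan is to reproduce the argument of Conrey and Snaith \cite{Conrey2007}: express the sum over zeros as a contour integral of the logarithmic derivative $\frac{L'}{L}(s,\chi_d)$, use the functional equation to collapse it so that only a family average of $\frac{L'}{L}$ remains, and evaluate that average with the Ratios Conjecture~1.2. Throughout one may take $f$ to be even, since $L(s,\chi_d)$ is self-dual and its nontrivial zeros occur in conjugate-symmetric pairs.

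First I would use the argument principle on a rectangle enclosing the critical strip, with vertical sides $\Re(s)=c$ and $\Re(s)=1-c$ for $c$ slightly larger than $1$, and push the horizontal sides to $\pm i\infty$ using the holomorphy and decay of $f$, to obtain
\begin{equation*}
\sum_{\gamma_d}f(\gamma_d)=\frac{1}{2\pi i}\left(\int_{(c)}-\int_{(1-c)}\right)f\left(\frac{s-\tfrac12}{i}\right)\frac{L'}{L}(s,\chi_d)\,ds-f(i/2),
\end{equation*}
the last term being the contribution of the trivial zero at $s=0$. In the integral over $\Re(s)=1-c$ I would substitute $s\mapsto1-s$, use the evenness of $f$, and apply the functional equation in the form $\frac{L'}{L}(1-s,\chi_d)=\frac{X_d'}{X_d}(s)-\frac{L'}{L}(s,\chi_d)$, which reflects it onto $\Re(s)=c$ and gives
\begin{equation*}
\sum_{\gamma_d}f(\gamma_d)=-\frac{1}{2\pi i}\int_{(c)}f\left(\frac{s-\tfrac12}{i}\right)\frac{X_d'}{X_d}(s)\,ds+\frac{2}{2\pi i}\int_{(c)}f\left(\frac{s-\tfrac12}{i}\right)\frac{L'}{L}(s,\chi_d)\,ds-f(i/2).
\end{equation*}
Moving the first integral to the critical line will introduce a term $f(-i/2)$ from the simple pole of $\frac{X_d'}{X_d}$ at $s=1$, which cancels the trivial-zero term for even $f$; and since $X_d(s)=|d|^{1/2-s}X(s,0)$ one has $-\frac{X_d'}{X_d}(\tfrac12+it)=\log\frac{d}{\pi}+\tfrac12\frac{\Gamma'}{\Gamma}(\tfrac14+\tfrac{it}{2})+\tfrac12\frac{\Gamma'}{\Gamma}(\tfrac14-\tfrac{it}{2})$. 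Summing over $d\le D$ then yields the first line of the claimed integrand.

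For the remaining integral I would sum over $d\le D$ first and apply the Ratios conjecture. Writing $\frac{L'}{L}(\tfrac12+r,\chi_d)=\frac{\partial}{\partial\alpha}\frac{L(\tfrac12+\alpha,\chi_d)}{L(\tfrac12+\gamma,\chi_d)}\big|_{\alpha=\gamma=r}$ and invoking Conjecture~1.2 with $K=Q=1$ — for which $Y_{\mathfrak{D}^+}(\alpha;\gamma)=\zeta(1+2\alpha)/\zeta(1+\alpha+\gamma)$ and $g_+(\tfrac12)=1$ — then differentiating in $\alpha$ and setting $\alpha=\gamma=r$, one obtains
\begin{equation*}
\sum_{d\le D}\frac{L'}{L}\left(\tfrac12+r,\chi_d\right)=\sum_{d\le D}\left(\frac{\zeta'(1+2r)}{\zeta(1+2r)}+A_{\mathfrak{D}^+}'(r;r)-\left(\frac{d}{\pi}\right)^{-r}\frac{\Gamma(\tfrac14-\tfrac r2)}{\Gamma(\tfrac14+\tfrac r2)}\zeta(1-2r)A_{\mathfrak{D}^+}(-r;r)\right)+o(D)
\end{equation*}
for $\Re(r)>0$. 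Two facts make this differentiation clean: $A_{\mathfrak{D}^+}(r;r)=1$ (the inner Euler sum collapses on the diagonal), and $1/\zeta(1-\alpha+\gamma)=(\gamma-\alpha)+O((\gamma-\alpha)^2)$ near $\alpha=\gamma$, so that in the $\epsilon=-1$ term only the derivative hitting this simple zero survives, producing the last summand. Inserting this into the second contour integral above, shifting the line of integration from $\Re(r)>0$ down to $\Re(r)=0$, and matching $\tfrac1\pi=\tfrac1{2\pi}\cdot2$, then produces the $\frac{\zeta'}{\zeta}$-, $A_{\mathfrak{D}^+}'$- and $A_{\mathfrak{D}^+}(-r;r)$-terms of the claimed integrand with $r=it$. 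This shift is legitimate: the apparent pole of the bracket at $r=\tfrac12$ is removable — the simple pole of $\Gamma(\tfrac14-\tfrac r2)$ is cancelled by a simple zero of $A_{\mathfrak{D}^+}(-r;r)$ there — while on the line $\Re(r)=0$ the simple poles of $\zeta'(1+2r)/\zeta(1+2r)$ and of $\zeta(1-2r)$ at $r=0$ cancel (their residues agree since $A_{\mathfrak{D}^+}(0;0)=1$), so the limiting integrand is regular. Finally I would verify the closed forms: taking $\partial_\alpha\log$ of the local factors of $A_{\mathfrak{D}^+}$ at $\alpha=\gamma=r$ gives $A_{\mathfrak{D}^+}'(r;r)=\sum_p\frac{\log p}{(p+1)(p^{1+2r}-1)}$, and substituting $\alpha=-r$, $\gamma=r$ and summing the geometric series in $(a_1,c_1)$ (with $\mu(p^{c_1})$ supported on $c_1\in\{0,1\}$) collapses each local factor, giving $A_{\mathfrak{D}^+}(-r;r)=\prod_p\big(1-\frac{1}{(p+1)p^{1-2r}}-\frac{1}{p+1}\big)\big(1-\tfrac1p\big)^{-1}$.

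The hard part will be the passage from the half-plane $\Re(r)>0$, where Conjecture~1.2 is asserted, across $r=0$ to the critical line. This requires knowing that the Ratios formula continues analytically past its stated range, making the pole cancellation at $r=0$ precise so that the limiting integrand is genuinely regular, and controlling the horizontal connecting segments and the tails of the contour integral using only the decay hypothesis $f(x)\ll(1+x^2)^{-1}$, so that all error terms stay $o(D)$. Everything else — the argument-principle identity, the functional-equation fold, the differentiation of the Ratios formula, and the Euler-product bookkeeping — is essentially formal once Conjecture~1.2 is granted.
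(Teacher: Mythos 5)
Your proposal is correct and follows essentially the same route as the source: the paper itself only quotes this theorem from Conrey and Snaith \cite{Conrey2007}, but reproduces exactly this argument-principle/functional-equation-fold/differentiated-Ratios derivation in its function-field analogue (Section 8, Theorems 8.2--8.3), and your treatment of the $\epsilon=-1$ term via the simple zero of $1/\zeta(1-\alpha+\gamma)$, the cancellation of the $r=0$ poles using $A_{\mathfrak{D}^+}(0;0)=1$, and the closed forms of $A_{\mathfrak{D}^+}(-r;r)$ and $A'_{\mathfrak{D}^+}(r;r)$ all check out. The only slip is cosmetic: the vertical sides of the rectangle should be at $\Re(s)=c$ with $c$ slightly larger than $\tfrac12$ (as in the paper's $\tfrac12+\tfrac{1}{\log(q^{2g+1})}<c<\tfrac34$), not larger than $1$, though your version still works since you compensate for the trivial zero at $s=0$ with the $f(i/2)$ term.
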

\section{Dirichlet L-Functions in Function Fields}
\subsection{Background in $\mathbb{F}_q[T]$}
We first introduce the notion which will be used throughout this article and then provide some background information on Dirichlet L-functions in function fields in both the odd and even characteristic cases. Most of the facts in this subsection are proved in \cite{Rosen2002}. Let $k:=\mathbb{F}_q(T)$ be the rational function field and $\infty=\left(\frac{1}{T}\right)$ be the infinite prime of $k$. We denote $\mathbb{A}^+$ to be the set of all monic polynomials in $\mathbb{A}:=\mathbb{F}_q[T]$ and $\mathbb{A}^+_n$ and $\mathbb{A}^+_{\leq n}$ the sets of all monic polynomials of degree $n$ and degree at most $n$ in $\mathbb{F}_q[T]$ respectively. For $f\in\mathbb{F}_q[T]$, we denote its norm by $|f|=q^{\text{deg}(f)}$ and $sgn(f)$ be its leading coefficient. Similarly, we let $\mu(f)$ and $\phi(f)$ denote the M\"obius function and the Euler-$\phi$ funcion in $\mathbb{F}_q[T]$ respectively. The letter $P$ denotes a monic irreducible polynomial over $\mathbb{F}_q[T]$ and we let $\mathcal{P}$ denote the set of all monic irreducible polynomials in $\mathbb{F}_q[T]$.\\
\par\noindent
For $\Re(s)>1$, the zeta function of $\mathbb{A}$, denoted by $\zeta_{\mathbb{A}}(s)$ is defined by the infinite series
\begin{equation}
    \zeta_{\mathbb{A}}(s):=\sum_{f\in\mathbb{A}^+}\frac{1}{|f|^s}=\prod_P\left(1-|P|^{-s}\right)^{-1}.
\end{equation}
There are $q^n$ monic polynomials of degree $n$, therefore 
\begin{equation*}
\zeta_{\mathbb{A}}(s)=(1-q^{1-s})^{-1}.
\end{equation*}
\subsection{Quadratic Function Field in Odd Characteristic}
For this subsection, let $q$ be odd and $q\equiv 1(\text{mod }4)$. In this setting, when $D\in\mathbb{F}_q[T]$ is square-free, the quadratic Dirichlet character $\chi_D$ is defined by the quadratic residue symbol for $\mathbb{F}_q[T]$ and the L-function corresponding to $\chi_D$ is defined for $\Re(s)>1$ by the infinite series
\begin{equation}
    L(s,\chi_D):=\sum_{f\in\mathbb{A}^+}\frac{\chi_D(f)}{|f|^s}.
\end{equation}
Let $\mathcal{H}_n$ denote the set of all monic, square-free polynomials of degree $n$ in $\mathbb{A}$. In this case Andrade and Keating \cite{Andrade2012} computed an asymptotic formula for the first moment of the family of L-functions associated to the quadratic character $\chi_D$, with $D\in\mathcal{H}_{2g+1}$. In particular, they proved that
\begin{equation}\label{eq:2.3}
    \sum_{D\in\mathcal{H}_{2g+1}}L\left(\frac{1}{2},\chi_D\right)=\frac{P(1)}{2\zeta_{\mathbb{A}}(2)}|D|\left[\log_q|D|+1+\frac{4}{\log q}\frac{P'}{P}(1)\right]+O\left(|D|^{\frac{3}{4}+\frac{1}{2}\log_q2}\right),
\end{equation}
where
\begin{equation*}
    P(s)=\prod_P\left(1-\frac{1}{|P|^s(|P|+1)}\right).
\end{equation*}
Andrade and Keating \cite{Andrade2014} also adapted the recipe of \cite{Conrey2005,Conrey2008a} to the function field setting to conjecture the integral moments and ratios of quadratic Dirichlet L-functions in function fields. Their conjecture reads.
\begin{conj}
Suppose that $q$ odd is the fixed cardinality of the finite field $\mathbb{F}_q$ and let $\mathcal{X}_D(s)=|D|^{\frac{1}{2}-s}X(s)$ and 
\begin{equation*}
    X(s)=q^{-\frac{1}{2}+s}.
\end{equation*}
That is $\mathcal{X}_D(s)$ is the factor in the functional equation 
\begin{equation*}
    L(s,\chi_D)=\mathcal{X}_D(s)L(1-s,\chi_D).
\end{equation*}
Summing over fundamental discriminants $D\in\mathcal{H}_{2g+1}$, we have
\begin{equation}
    \sum_{D\in\mathcal{H}_{2g+1}}L\left(\frac{1}{2},\chi_D\right)^k=\sum_{D\in\mathcal{H}_{2g+1}}Q_k(\log_q|D|)(1+o(1)),
\end{equation}
where $Q_k$ is the polynomial of degree $\frac{1}{2}k(k+1)$ given by the $k$-fold residue 
\begin{align*}
    Q_k(x)=\frac{(-1)^{\frac{k(k-1)}{2}}2^k}{k!}\frac{1}{(2\pi i)^k}\oint\dotsc\oint\frac{G(z_1,\dotsc,z_k)\Delta(z_1^2,\dotsc,z_k^2)^2}{\prod_{j=1}^kz_j^{2k-1}}q^{\frac{x}{2}\sum_{j=1}^kz_j}dz_1\dotsc dz_k,
\end{align*}
where $\Delta(z_1,\dotsc,z_k)$ is the Vandermonde determinant defined in (\ref{eq:1.9}),
\begin{equation*}
    G(z_1,\dotsc,z_k)=A\left(\frac{1}{2};z_1,\dotsc,z_k\right)\prod_{j=1}^kX\left(\frac{1}{2}+z_j\right)^{-\frac{1}{2}}\prod_{1\leq i\leq j\leq k}\zeta_{\mathbb{A}}(1+z_i+z_j)
\end{equation*}
and $A\left(\frac{1}{2};z_1,\dotsc,z_k\right)$ is the Euler product, absolutely convergent for $|\Re(z_j)|<\frac{1}{2}$ defined by
\begin{align*}
    A\left(\frac{1}{2};z_1,\dotsc,z_k\right)&=\prod_P\prod_{1\leq i\leq j\leq k}\left(1-\frac{1}{|P|^{1+z_i+z_j}}\right)\\
    &\times \left(\frac{1}{2}\left(\prod_{j=1}^k\left(1-\frac{1}{|P|^{\frac{1}{2}+z_j}}\right)^{-1}+\prod_{j=1}^k\left(1+\frac{1}{|P|^{\frac{1}{2}+z_j}}\right)^{-1}\right)+\frac{1}{|P|}\right)\left(1+\frac{1}{|P|}\right)^{-1}. 
\end{align*}
\end{conj}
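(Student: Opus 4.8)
This assertion is a conjecture rather than a theorem; what one actually presents is the heuristic recipe of \cite{Conrey2005} (in its ratios variant \cite{Conrey2008a}) transplanted to $\mathbb{F}_q[T]$, which pins down the right-hand side modulo one non-rigorous step, with an unconditional proof available at present only for the first few $k$. The plan is as follows. First I would replace the $k$th moment by the shifted sum $\sum_{D\in\mathcal{H}_{2g+1}}\prod_{j=1}^{k}L\!\left(\tfrac12+z_j,\chi_D\right)$, which specialises to the moment at $z_1=\dots=z_k=0$ and is better suited to the symmetrisation below. For each $D\in\mathcal{H}_{2g+1}$ the function $L(s,\chi_D)$ is a polynomial in $q^{-s}$ of degree $2g$ obeying $L(s,\chi_D)=\mathcal{X}_D(s)L(1-s,\chi_D)$, so by the approximate functional equation each factor $L(\tfrac12+z_j,\chi_D)$ is a sum of two finite Dirichlet polynomials, which the recipe completes to the full Dirichlet series $\sum_{f\in\mathbb{A}^+}\chi_D(f)|f|^{-1/2-z_j}$ and $\mathcal{X}_D(\tfrac12+z_j)\sum_{f\in\mathbb{A}^+}\chi_D(f)|f|^{-1/2+z_j}$, respectively. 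Multiplying out over $j$ then expresses the shifted moment as a sum of $2^{k}$ terms indexed by $\epsilon\in\{-1,1\}^{k}$, the $\epsilon$-term carrying $\prod_{j:\epsilon_j=-1}\mathcal{X}_D(\tfrac12+z_j)$ together with a $k$-fold Dirichlet series in which $z_j$ has been replaced by $\epsilon_j z_j$.

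Next, for each fixed $\epsilon$, I would interchange the sum over $D$ with the Dirichlet series and use the orthogonality of this (symplectic) family: the average of $\chi_D(f)$ over $D\in\mathcal{H}_{2g+1}$ has a main term only when $f$ is a perfect square, in which case it is $|\mathcal{H}_{2g+1}|\,\zeta_{\mathbb{A}}(2)^{-1}\prod_{P\mid f}\left(1+|P|^{-1}\right)^{-1}(1+o(1))$, and I would discard the contributions of the non-square $f$. The surviving ``diagonal'' sum — over $k$-tuples $(f_1,\dots,f_k)$ with $f_1\cdots f_k$ a perfect square, of $\prod_j|f_j|^{-1/2-\epsilon_j z_j}\prod_{P\mid f_1\cdots f_k}\left(1+|P|^{-1}\right)^{-1}$ — is multiplicative, and imposing the even-total-exponent condition prime by prime through the identity $\sum_{a_1+\dots+a_k\ \text{even}}\prod_j x_j^{a_j}=\tfrac12\!\left(\prod_j(1-x_j)^{-1}+\prod_j(1+x_j)^{-1}\right)$ reproduces exactly the local factor occurring in $A(\tfrac12;\cdot)$. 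Removing from the Euler product its divergent part $\prod_{i\le j}\left(1-|P|^{-1-\epsilon_i z_i-\epsilon_j z_j}\right)^{-1}$, which globally is $\prod_{i\le j}\zeta_{\mathbb{A}}(1+\epsilon_i z_i+\epsilon_j z_j)$, leaves precisely the absolutely convergent product $A(\tfrac12;\epsilon_1 z_1,\dots,\epsilon_k z_k)$. Hence the $\epsilon$-term equals, up to $(1+o(1))$, the quantity $|\mathcal{H}_{2g+1}|\,\zeta_{\mathbb{A}}(2)^{-1}\left(\prod_{j:\epsilon_j=-1}\mathcal{X}_D(\tfrac12+z_j)\right)\prod_{i\le j}\zeta_{\mathbb{A}}(1+\epsilon_i z_i+\epsilon_j z_j)\,A(\tfrac12;\epsilon_1 z_1,\dots,\epsilon_k z_k)$.

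Finally I would sum over $\epsilon\in\{-1,1\}^{k}$. Writing $\mathcal{X}_D(\tfrac12+z_j)=|D|^{-z_j}X(\tfrac12+z_j)$ and factoring out the symmetric quantity $\prod_j X(\tfrac12+z_j)^{-1/2}$ turns the sum over $\epsilon$ into a sign-symmetric combinatorial sum of the type handled by the contour-integral lemma of \cite{Conrey2005}: the simple poles of $\prod_{i\le j}\zeta_{\mathbb{A}}(1+z_i+z_j)$ along the diagonals $z_i+z_j=0$, together with the vanishing of $\Delta(z_1^2,\dots,z_k^2)^2$ along $z_i=\pm z_j$, assemble the $2^{k}$ terms into the single $k$-fold residue $\frac{1}{(2\pi i)^k}\oint\!\cdots\!\oint\frac{G(z_1,\dots,z_k)\,\Delta(z_1^2,\dots,z_k^2)^2}{\prod_{j=1}^{k}z_j^{2k-1}}\,q^{\frac{x}{2}\sum_{j=1}^{k} z_j}\,dz_1\dots dz_k$, with $x=\log_q|D|$ and $G$ as in the statement, the combinatorial prefactor working out to $\frac{(-1)^{k(k-1)/2}2^k}{k!}$. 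A degree count confirms the shape of $Q_k$: the simple poles contribute $\prod_{i\le j}(z_i+z_j)^{-1}=2^{-k}\prod_j z_j^{-1}\prod_{i<j}(z_i+z_j)^{-1}$, so after combining with $\Delta(z_1^2,\dots,z_k^2)^2=\prod_{i<j}(z_i-z_j)^2(z_i+z_j)^2$ and with $\prod_j z_j^{-(2k-1)}$ one must extract the coefficient of $\prod_j z_j^{2k-1}$ from a numerator of degree $\tfrac{3k(k-1)}{2}$ multiplied by $q^{\frac{x}{2}\sum_j z_j}$, forcing the exponential to supply its degree-$\tfrac12 k(k+1)$ part; hence $Q_k$ is a polynomial in $\log_q|D|$ of degree $\tfrac12 k(k+1)$, matching the $\mathrm{USp}$ random-matrix prediction, and for $k=1$ one recovers \eqref{eq:2.3}. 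Summing over $D\in\mathcal{H}_{2g+1}$ yields the asserted formula.

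The step I expect to be the real obstacle — and the reason this remains a conjecture — is the discarding of the off-diagonal terms: one must show that $\sum_{D\in\mathcal{H}_{2g+1}}\chi_D(f)$ is negligible on average over non-square $f$ in the ranges that arise once the Dirichlet pieces are taken of their full length, which requires Poisson summation over $\mathbb{F}_q[T]$ together with large-sieve-type estimates and is currently within reach only for small $k$. A subsidiary technical point is justifying that the completion of the finite Dirichlet polynomials furnished by the exact functional equation to full Dirichlet series affects only lower-order terms. Both are taken on heuristic grounds here; the contribution of the present paper is to run this same recipe when $q$ is a power of $2$, where $\chi_D$, the functional equation, and the family $\mathcal{H}_{2g+1}$ must be replaced by their Artin--Schreier counterparts.
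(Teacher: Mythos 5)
Your proposal is correct and follows essentially the same CFKRS recipe that the paper itself runs (for its even-characteristic analogue in Section 4): shifted moments, approximate functional equation and completion to full Dirichlet series, orthogonality retaining only the square terms, Euler-product factorisation of the diagonal with the even-exponent identity, extraction of the $\zeta_{\mathbb{A}}$ factors to define $A\left(\frac{1}{2};\cdot\right)$, and the contour-integral lemma assembling the $2^k$ sign terms into the $k$-fold residue. The only blemish is a harmless normalisation slip in the orthogonality step: for square $f$ the main term of $\sum_{D\in\mathcal{H}_{2g+1}}\chi_D(f)$ is $q^{2g+1}\zeta_{\mathbb{A}}(2)^{-1}\prod_{P\mid f}\left(1+|P|^{-1}\right)^{-1}$, i.e.\ $|\mathcal{H}_{2g+1}|\prod_{P\mid f}\left(1+|P|^{-1}\right)^{-1}$, so writing both $|\mathcal{H}_{2g+1}|$ and $\zeta_{\mathbb{A}}(2)^{-1}$ double-counts that density factor.
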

\begin{conj}
Suppose that $\alpha_k$ and $\gamma_q$ are positive and $q$ odd is the fixed cardinality of the finite field $\mathbb{F}_q$. Let $\mathfrak{D}=\{L(s,\chi_D):D\in\mathcal{H}_{2g+1}\}$ to be the family of L-functions associated with the quadratic character $\chi_D$. Then, using the same notions as in previous conjecture, we have
\begin{align*}
    &\sum_{D\in\mathcal{H}_{2g+1}}\frac{\prod_{k=1}^KL\left(\frac{1}{2}+\alpha_k,\chi_D\right)}{\prod_{q=1}^QL\left(\frac{1}{2}+\gamma_q,\chi_D\right)}\\
    &=\sum_{D\in\mathcal{H}_{2g+1}}\sum_{\epsilon\in\{-1,1\}^K}|D|^{\frac{1}{2}\sum_{k=1}^K(\epsilon_k\alpha_k-\alpha_k)}\prod_{k=1}^KX\left(\frac{1}{2}+\frac{\alpha_k-\epsilon_k\alpha_k}{2}\right)\\
    &\times Y_{\mathfrak{D}}(\epsilon_1\alpha_1,\dotsc,\epsilon_K\alpha_K;\gamma)A_{\mathfrak{D}}(\epsilon_1\alpha_1,\dotsc,\epsilon_K\alpha_K;\gamma)+o(|D|),
\end{align*}
where 
\begin{align*}
    A_{\mathfrak{D}}(\alpha;\gamma)&=\prod\frac{\prod_{1\leq j\leq k\leq K}\left(1-\frac{1}{|P|^{1+\alpha_j+\alpha_k}}\right)\prod_{1\leq q<r\leq Q}\left(1-\frac{1}{|P|^{1+\gamma_q+\gamma_r}}\right)}{\prod_{k=1}^K\prod_{q=1}^Q\left(1-\frac{1}{|P|^{1+\alpha_k+\gamma_q}}\right)}\\
    &\times \left(1+\left(1+\frac{1}{|P|}\right)^{-1}\sum_{0<\sum_ka_k+\sum_qc_q\text{ is even}}\frac{\prod_{q=1}^Q\mu(P^{c_q})}{|P|^{\sum_ka_k\left(\frac{1}{2}+\alpha_k\right)+\sum_qc_q\left(\frac{1}{2}+\gamma_q\right)}}\right)
\end{align*}
and
\begin{equation*}
    Y_{\mathfrak{D}}(\alpha;\gamma)=\frac{\prod_{1\leq j\leq k\leq K}\zeta_{\mathbb{A}}(1+\alpha_j+\alpha_k)\prod_{1\leq q<r\leq Q}\zeta_{\mathbb{A}}(1+\gamma_q+\gamma_r)}{\prod_{k=1}^K\prod_{q=1}^Q\zeta_{\mathbb{A}}(1+\alpha_k+\gamma_q)}.
\end{equation*}
If we let
\begin{align*}
    H_{D,|D|,\alpha,\gamma}(w)=|D|^{\frac{1}{2}\sum_{k=1}^Kw_k}\prod_{k=1}^KX\left(\frac{1}{2}+\frac{\alpha_k-w_k}{2}\right)Y_{\mathfrak{D}}(w_1,\dotsc,w_K;\gamma)A_{\mathfrak{D}}(\epsilon_1\alpha_1,\dotsc,\epsilon_K\alpha_K;\gamma),
\end{align*}
then the conjecture may be formulated as
\begin{align*}
    &\sum_{D\in\mathcal{H}_{2g+1}}\frac{\prod_{k=1}^KL\left(\frac{1}{2}+\alpha_k,\chi_D\right)}{\prod_{q=1}^QL\left(\frac{1}{2}+\gamma_q,\chi_D\right)}=\sum_{D\in\mathcal{H}_{2g+1}}|D|^{-\frac{1}{2}\sum_{k=1}^K\alpha_k}\sum_{\epsilon\in\{-1,1\}^K}H_{D,|D|,\alpha,\gamma}(\epsilon_1\alpha_1,\dotsc,\epsilon_K\alpha_K)+o(|D|).
\end{align*}
\end{conj}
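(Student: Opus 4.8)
Since this statement is a conjecture produced by the ratios \emph{recipe} of Conrey, Farmer and Zirnbauer \cite{Conrey2008a}, transplanted to $\mathbb{F}_q[T]$ as in Andrade and Keating \cite{Andrade2014}, the plan is not to give a rigorous proof but to carry out that recipe faithfully and verify that it reproduces the displayed right-hand side. First I would replace each numerator factor $L(\tfrac{1}{2}+\alpha_k,\chi_D)$ by its functional-equation decomposition $L(\tfrac{1}{2}+\alpha_k,\chi_D)=\sum_f \chi_D(f)|f|^{-1/2-\alpha_k}+\mathcal{X}_D(\tfrac{1}{2}+\alpha_k)\sum_f \chi_D(f)|f|^{-1/2+\alpha_k}$, the monic $f$ ranging over the relevant degrees (exact here, since $L(s,\chi_D)$ is a polynomial in $q^{-s}$), and expand each denominator factor as the Dirichlet series $L(\tfrac{1}{2}+\gamma_q,\chi_D)^{-1}=\sum_h \mu(h)\chi_D(h)|h|^{-1/2-\gamma_q}$. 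Multiplying out the $K$ numerator decompositions yields $2^K$ terms indexed by $\epsilon\in\{-1,1\}^K$: the $k$-th factor is taken from its ``principal'' piece when $\epsilon_k=1$ and from its ``dual'' piece when $\epsilon_k=-1$, and writing $\mathcal{X}_D(s)=|D|^{1/2-s}X(s)$ the attendant factor is exactly $|D|^{\frac{1}{2}\sum_k(\epsilon_k\alpha_k-\alpha_k)}\prod_k X(\tfrac{1}{2}+\tfrac{\alpha_k-\epsilon_k\alpha_k}{2})$, accounting for all the explicit prefactors in the statement.

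Next I would interchange the order of summation, so that each $\epsilon$-term becomes a multiple Dirichlet sum over $f_1,\dots,f_K,h_1,\dots,h_Q$ whose coefficient is the family average $\sum_{D\in\mathcal{H}_{2g+1}}\chi_D(f_1\cdots f_K\,h_1\cdots h_Q)$. The heart of the recipe is the orthogonality of quadratic symbols over square-free moduli: as $g\to\infty$ this average is asymptotically $(\#\mathcal{H}_{2g+1})\prod_{P\mid N}(1+|P|^{-1})^{-1}$ when $N:=f_1\cdots f_K h_1\cdots h_Q$ is a perfect square and is negligible otherwise. Discarding the non-square contributions and --- this is the non-rigorous step --- extending the surviving sums over perfect squares to infinity, one obtains a sum multiplicative in $N$, hence an Euler product over the monic irreducibles $P$; reading the ``$N$ is a square'' condition one prime at a time turns the $P$-local factor into $1$ plus $(1+|P|^{-1})^{-1}$ times a sum over exponent vectors $(a_k),(c_q)$ with $0<\sum_k a_k+\sum_q c_q$ even, weighted by $\prod_q\mu(P^{c_q})$ and by $|P|^{-\sum_k a_k(1/2+\epsilon_k\alpha_k)-\sum_q c_q(1/2+\gamma_q)}$.

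Then I would extract the polar part. The only non-absolutely-convergent constituents of this Euler product are the factors $(1-|P|^{-(1+\alpha_j+\alpha_k)})^{-1}$ and $(1-|P|^{-(1+\gamma_q+\gamma_r)})^{-1}$ coming from pairs within the numerator and within the denominator, together with the factors $(1-|P|^{-(1+\alpha_k+\gamma_q)})$ from crossing pairs; pulling these out globally produces precisely $Y_{\mathfrak{D}}(\epsilon_1\alpha_1,\dots,\epsilon_K\alpha_K;\gamma)$ and leaves an Euler product converging absolutely for $|\Re(\alpha_k)|,|\Re(\gamma_q)|<\tfrac{1}{2}$, which is term-by-term the stated $A_{\mathfrak{D}}(\epsilon_1\alpha_1,\dots,\epsilon_K\alpha_K;\gamma)$. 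Summing the $2^K$ contributions, then over $D\in\mathcal{H}_{2g+1}$, and absorbing everything discarded into $o(|D|)$ gives the asserted identity; the reformulation in terms of $H_{D,|D|,\alpha,\gamma}$ is purely cosmetic, being the substitution $w_k=\epsilon_k\alpha_k$.

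The genuinely hard --- with present technology, impossible --- step is the one flagged in the second paragraph: dropping the off-diagonal non-square terms and replacing finitely many terms by the full infinite Dirichlet series are only heuristically valid, which is why the result must be stated as a conjecture rather than a theorem. The one piece of honest mathematical content is the Euler-product bookkeeping of the third paragraph, which has to be executed carefully enough that the local factors land on exactly the displayed $A_{\mathfrak{D}}$; a convenient sanity check is that the entire computation reduces, prime by prime, to the Conrey--Farmer--Zirnbauer local calculation over $\mathbb{Q}$ with $\zeta$ replaced by $\zeta_{\mathbb{A}}$ and $p$ by $|P|$.
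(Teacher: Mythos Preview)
Your proposal is correct and follows the ratios recipe exactly as intended. The paper itself does not derive Conjecture~2.2 --- it is quoted as background from Andrade and Keating \cite{Andrade2014} --- but the paper carries out precisely the recipe you describe in Section~7 for the even-characteristic analogue (Conjecture~3.2): approximate functional equation in the numerator, M\"obius expansion in the denominator, averaging $\chi_D$ over the family via the ``only squares survive'' heuristic (the analogue of your orthogonality step is Lemma~4.6), writing the diagonal as an Euler product $G_{\mathcal{U}}$, and factoring out the polar part $Y_{\mathcal{U}}$ to leave the absolutely convergent $A_{\mathcal{U}}$, with the $\mathcal{X}_u$-factors repackaged via Lemma~7.1 into the displayed $X(\tfrac{1}{2}+\tfrac{\alpha_k-\epsilon_k\alpha_k}{2})$ form. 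Your account matches this step for step.
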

\par\noindent
Using the Ratios conjecture Andrade and Keating obtained the one level density for the zeros of this family of quadratic Dirichlet L-functions. Bui and Florea \cite{Bui2018} studied the one level density and the pair correlation of the zeros of this family of quadratic Dirichlet L-functions. This allowed them to obtain non-vanishing results and lower bounds for the proportion of simple zeros. Also, for some restricted test functions, they compute some lower order terms which is not detected by the Ratios conjecture. In a more recent paper, Bui, Florea and Keating \cite{Bui2021} used the Ratios conjecture to compute the one and two level densities of zeros of quadratic Dirichlet L-functions in function fields. \\
\par\noindent
Florea \cite{Florea2017} improved the asymptotic formula obtained by Andrade and Keating (\ref{eq:2.3}) by obtaining a secondary main term of size $gq^{\frac{2g+1}{3}}$ whilst bounding the error term by $q^{\frac{g}{2}(1+\epsilon)}$. In particular she proved that
\begin{equation}
    \sum_{D\in\mathcal{H}_{2g+1}}L\left(\frac{1}{2},\chi_D\right)=\frac{P(1)}{2\zeta_{\mathbb{A}}(2)}q^{2g+1}\left[(2g+1)+1+\frac{4}{\log q}\frac{P'}{P}(1)\right]+q^{\frac{2g+1}{3}}R(2g+1)+O(q^{\frac{g}{2}(1+\epsilon)}),
\end{equation}
where $R$ is a polynomial of degree 1 that can be explicitly calculated. In this setting Florea \cite{Florea2017a,Florea2017c} also proved that
\begin{align}
    \sum_{D\in\mathcal{H}_{2g+1}}L\left(\frac{1}{2},\chi_D\right)^2&=q^{2g+1}R_2(2g+1)+O(q^{g(1+\epsilon)}),\\
    \sum_{D\in\mathcal{H}_{2g+1}}L\left(\frac{1}{2},\chi_D\right)^3&=q^{2g+1}R_3(2g+1)+O(q^{\frac{3g}{2}(1+\epsilon)})
\end{align}
and
\begin{equation}
    \sum_{D\in\mathcal{H}_{2g+1}}L\left(\frac{1}{2},\chi_D\right)^4=q^{2g+1}(a_{10}g^{10}+a_9g^9+a_8g^8)+O(q^{2g+1}g^{7+\frac{1}{2}+\epsilon}),
\end{equation}
where $R_2$ and $R_3$ are polynomials of degree 3 and 6 respectively and $a_{10}, a_9$ and $a_8$ are arithmetic factors. Florea also showed that the asymptotic formulas agreed with Conjecture 2.1. For the third moment, Dicanou \cite{Diaconu2019} proved the existence of a secondary main term of size $q^{\frac{3}{4}(2g+1)}$ whilst bounding the error term by $q^{(2g+1)\left(\frac{2}{3}+\delta\right)}$ for every small $\delta>0$. \\
\par\noindent
In a recent paper, Andrade, Jung and Shamesaldeen \cite{AndradeJungShamesaldeen2018} conjectured the integral moments and ratios of quadratic Dirichlet L-functions over monic irreducible polynomials in $\mathbb{F}_q[T]$ and showed that their conjecture agrees with the asymptotic formulas obtained by Andrade and Keating \cite{Andrade2013} and Bui and Florea \cite{Bui2020}.
\subsection{Quadratic Function Field in Even Characteristic}
For the remainder of this article, we assume that $q$ is a power of 2. Any separable quadratic extension $K$ of $k$ is of the form $K=K_u:=k(x_u)$, where $x_u$ is a zero of $X^2+X+u=0$ for some $u\in k$. Two elements $u,v\in k$ are equivalent if $K_u=K_v$. Furthermore, they are also equivalent if and only if $u+v=\rho(w)$ for $w\in k$, where $\rho:k\rightarrow k$ is an additive homomorphism defined by $\rho(x)=x^2+x$ (for more information, see \cite{Hasse1934,Hu2010}). For $\xi\in\mathbb{F}_q\backslash\rho(\mathbb{F}_q)$, the following Lemma is due to Y. Li, but a proof is given in \cite{Bae2018}.
\begin{lemma}[\cite{Bae2018},Lemma 2.2]
Any separable quadratic extension $K$ of $k$ is of the form $K=K_u$, where $u\in k$ can be uniquely normalised to satisfy the following conditions:
\begin{equation}\label{eq:2.9}
    u=\sum_{i=1}^m\sum_{j=1}^{e_i}\frac{Q_{i,j}}{P_i^{2j-1}}+\sum_{\ell=1}^n\alpha_{\ell}T^{2\ell-1}+\alpha,
\end{equation}
where $P_i\in\mathcal{P}$ are distinct, $Q_{i,j}\in\mathbb{A}$ with deg$(Q_{i,j})<$deg$(P_i)$, $Q_{i,e_i}\neq 0,\alpha\in\{0,\xi\}, \alpha_{\ell}\in\mathbb{F}_q$ and $\alpha_n\neq 0$ for $n>0$.
\end{lemma}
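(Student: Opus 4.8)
The statement is the Artin--Schreier counterpart of the familiar ``squarefree discriminant'' normalization of quadratic extensions in odd characteristic, and the plan is to exploit that analogy. By the discussion preceding the lemma, separable quadratic extensions of $k$ are in bijection with the nonzero classes of the $\mathbb{F}_2$-vector space $k/\rho(k)$, and $K_u=K_v$ precisely when $u+v\in\rho(k)$; so the task is to exhibit one canonical representative in each nonzero class and show it is unique. Everything rests on two features of characteristic $2$: the Frobenius $x\mapsto x^2$ is bijective on every finite field (hence every element of $\mathbb{F}_q$, and of each residue field $\mathbb{A}/(P)$, is a square), and $\rho$ is additive with $y^2\equiv y\pmod{\rho(k)}$ for all $y$, since $y^2+y=\rho(y)$.

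First I would use the partial fraction decomposition $k=\mathbb{F}_q[T]\oplus\bigoplus_{P}V_P$, where $V_P$ denotes the $\mathbb{F}_q$-span of the fractions $A/P^{\ell}$ with $\ell\ge 1$ and $\deg A<\deg P$: every $f\in k$ has a unique expression $f=g+\sum_P f_P$ of this shape. Because $\rho$ is additive, it respects this decomposition: for $f=g+\sum_P f_P$ we have $\rho(f)=\rho(g)+\sum_P\rho(f_P)$ (no cross terms survive in characteristic $2$), with $\rho(g)\in\mathbb{F}_q[T]$ and $\rho(f_P)\in V_P$ (squaring a fraction supported at $P$ only doubles pole orders and, after re-expanding, keeps numerators of degree $<\deg P$; nothing leaks to the polynomial part or to other primes). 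Hence
\[
k/\rho(k)\;\cong\;\big(\mathbb{F}_q[T]/\rho(\mathbb{F}_q[T])\big)\;\oplus\;\bigoplus_P\big(V_P/\rho(V_P)\big),
\]
and it suffices to normalize inside each summand independently.

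For a fixed $P$, suppose $A/P^{2j}$ occurs with $2j$ the largest pole order present. Since $\mathbb{A}/(P)$ is a perfect field of characteristic $2$, write $A\equiv C^2\pmod P$ with $\deg C<\deg P$, so $A=C^2+PE$ with $\deg E<\deg P$ (automatic, as $\deg(A+C^2)\le 2\deg P-2$); then
\[
\frac{A}{P^{2j}}=\Big(\frac{C}{P^{j}}\Big)^2+\frac{E}{P^{2j-1}}\equiv\frac{C}{P^{j}}+\frac{E}{P^{2j-1}}\pmod{\rho(V_P)}.
\]
Each such substitution replaces a term of even pole order $2j$ by terms of pole order $<2j$, so iterating removes every even pole order and leaves $\sum_{j=1}^{e}Q_{j}/P^{2j-1}$ with $\deg Q_j<\deg P$. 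The same device with $a_{2m}T^{2m}=(bT^m)^2\equiv bT^m$ (where $b^2=a_{2m}$) reduces each polynomial modulo $\rho(\mathbb{F}_q[T])$ to $\sum_{\ell\ge 1}\alpha_\ell T^{2\ell-1}+\alpha_0$; and since the kernel of $\rho$ on $\mathbb{F}_q$ is $\mathbb{F}_2$, the group $\rho(\mathbb{F}_q)$ has index $2$, so $\alpha_0$ is congruent mod $\rho(\mathbb{F}_q)$ to $0$ or to the fixed element $\xi\notin\rho(\mathbb{F}_q)$. Reassembling and discarding zero coefficients (so that $Q_{i,e_i}\ne 0$ and $\alpha_n\ne 0$) puts $u$ in the form \eqref{eq:2.9}; this proves existence.

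Uniqueness reduces to showing that a normalized $u$ lying in $\rho(k)$ must be $0$. If $u=\rho(w)=w^2+w$ and $w$ had a pole of order $e\ge 1$ at a prime $\mathfrak p$ (finite or the infinite prime), then $v_{\mathfrak p}(w^2)=-2e<-e=v_{\mathfrak p}(w)$ would force $v_{\mathfrak p}(u)=-2e$ to be \emph{even}; but a normalized $u$ is regular away from the $P_i$ and the infinite prime, has odd pole order $2e_i-1$ at each $P_i$, and has pole order $2n-1$ (odd) at the infinite prime (the finite-fraction part being regular there). Hence $w$ has no poles at all, i.e.\ $w\in\mathbb{F}_q$ and $u=\rho(w)\in\rho(\mathbb{F}_q)$; being a constant, this forces $m=n=0$ in the normalized form, and $\{0,\xi\}\cap\rho(\mathbb{F}_q)=\{0\}$ gives $u=0$. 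I expect the only delicate points to be the verification that $\rho(k)$ decomposes compatibly with the partial fraction decomposition and that the local reductions terminate; once these are settled, the parity observation ``$\rho$ produces only poles of even order'' makes uniqueness immediate and the rest is routine.
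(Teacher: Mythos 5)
The paper does not actually prove this lemma --- it is quoted from \cite{Bae2018} --- so there is no in-paper proof to compare against; judged on its own, your argument is correct and follows the standard route (the $\rho$-compatible partial-fraction splitting of $k/\rho(k)$, local removal of even pole orders via $y^2\equiv y\ (\mathrm{mod}\ \rho(k))$ together with perfectness of the residue fields, and the parity-of-valuations argument for uniqueness), which is essentially the proof given in the cited reference. The one step worth making explicit is that reducing uniqueness to ``a normalized $u$ in $\rho(k)$ must vanish'' uses that the sum of two normalized elements is again of the normalized form (true, since only odd pole orders and odd-degree monomials can occur in the sum and $\alpha+\alpha'\in\{0,\xi\}$ in characteristic $2$).
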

\par\noindent
Let $u\in k$ be normalised as in (\ref{eq:2.9}). The infinite prime $\infty=\left(\frac{1}{T}\right)$ splits, is inert or ramified in $K_u$ according to if $n=0$ and $\alpha=0$, $n=0$ and $\alpha=\xi$ or $n>0$. Then the field $K_u$ is called real, inert imaginary or ramified imaginary respectively. The discriminant $D_u$ of $K_u$ is given by 
\[
D_u=
\begin{cases}
\prod_{i=1}^mP_i^{2e_i}&\text{if } n=0,\\
\prod_{i=1}^mP_i^{2e_i}\left(\frac{1}{T}\right)^{2n}&\text{if }n>0.
\end{cases}
\]
By the Hurwitz genus formula (\cite{Stichtenoth1993}, Theorem III.4.12) the genus $g$ of $K_u$ is given by
\begin{equation}
    g=\frac{1}{2}\text{deg}(D_u)-1.
\end{equation}
For $M\in\mathbb{A}^+$, let $r(M)=\prod_{P|M}P$ and $t(M)=M\times r(M)$. For $P\in\mathcal{P}$, let $\mathfrak{v}_P$ be the normalised valuation at $P$, that is $\mathfrak{v}_P(M)=e$, where $P^e||M$. Let $\mathcal{B}$ be the set of monic polynomials $M$ such that $\mathfrak{v}_P(M)=0$ or odd for any $P\in\mathcal{P}$. For $M\in\mathcal{B}$, let $\ell_P=\frac{1}{2}(\mathfrak{v}_P(M)+1)$ for any $P|M$ and let
\begin{equation*}
    \tilde{M}=\prod_{P|M}P^{\ell_P}=\sqrt{t(M)}.
\end{equation*}
For a positive integer $n$, let $\mathcal{B}_n=\{M\in\mathcal{B}:\text{deg}(t(M))=2n\}$. Similarly, let $\mathcal{C}$ be the set of rational functions $\frac{D}{M}\in k$ such that $D\in\mathbb{A}$, $M\in\mathcal{B}$ and deg$(D)<$deg$(M)$ and let $\mathcal{C}_n=\left\{\frac{D}{M}:M\in\mathcal{B}_n\right\}$. Furthermore let $\mathcal{E}$ be the set of rational functions $\frac{D}{M}\in\mathcal{C}$ of the form
\begin{equation*}
    \frac{D}{M}=\sum_{P|M}\sum_{i=1}^{\ell_P}\frac{A_{P,i}}{P^{2i-1}},
\end{equation*}
where deg$(A_{P,i})<$deg$(P)$ for any $P|M$, for all $1\leq i\leq \ell_P$ and let $\mathcal{E}_n=\mathcal{E}\cap \mathcal{C}_n$. Note that for $\frac{D}{M}\in\mathcal{E}$, gcd$(D,M)=1$ if and only if $A_{P,\ell_P}\neq 0$ for all $P|M$. Let $\mathcal{F}$ be the set of rational functions $\frac{D}{M}\in\mathcal{E}$ such that $A_{P,\ell_P}\neq 0$ for all $P|M$ and $\mathcal{F}'=\{u+\xi:u\in\mathcal{F}\}$. Further, let $\mathcal{F}_n=\mathcal{F}\cap\mathcal{E}_n$ and $\mathcal{F}'_n=\{u+\xi:u\in\mathcal{F}_n\}$. Then by the normalisation in (\ref{eq:2.9}), $u\rightarrow K_u$ defines a one-to-one correspondence between $\mathcal{F}$ (respectively $\mathcal{F}'$) and the set of real (respectively inert imaginary) separable quadratic extensions of $k$. Similarly, $u\rightarrow K_u$ defines a correspondence between $\mathcal{F}_n$ (respectively $\mathcal{F}'_n$) and the set of real (respectively inert imaginary) separable quadratic extensions $K_u$ of $k$ with genus $g-1$. \\
\par\noindent
For any positive integer $s$, let $\mathcal{G}_s$ be the set of polynomials $F(T)\in\mathbb{A}$ of the form
\begin{equation}
    F(T)=\alpha+\sum_{i=1}^s\alpha_iT^{2i-1},
\end{equation}
where $\alpha\in\{0,\xi\},\alpha_i\in\mathbb{F}_q$ and $\alpha_s\neq 0$. Let $\mathcal{G}=\cup_{s\geq 1}\mathcal{G}_s$ and $\mathcal{I}=\{u+F:u\in\tilde{\mathcal{F}},F\in\mathcal{G}\}$, where $\tilde{\mathcal{F}}=\mathcal{F}\cup\mathcal{F}_0$ and $\mathcal{F}_0=\{0\}$. By the normalisation given in (\ref{eq:2.9}), we see that $w\rightarrow K_w$ defines a one-to-one correspondence between $\mathcal{I}$ and the set of ramified imaginary separable quadratic extensions of $k$.\\
\par\noindent
For any integers $r\geq 0$ and $s\geq 1$, let $\mathcal{I}_{(r,s)}=\{u+F:u\in\mathcal{F}_r,F\in\mathcal{G}_s\}$. Then, for any integer $n\geq 1$, let $\mathcal{I}_n$ be the union of all $\mathcal{I}_{(r,s)}$, where $(r,s)$ runs over all pairs of non-negative integers $s>0$ and $r+s=n$. Then $u\rightarrow K_u$ defines a correspondence between $\mathcal{I}_n$ and the set of all ramified imaginary separable quadratic extension $K_u$ of $k$ with genus $g-1$. 
\begin{lemma}[\cite{Bae2018}, Lemma 2.3]
For positive integers $n$, we have $\#\mathcal{B}_n=q^n, \#\mathcal{E}_n=q^{2n}, \#\mathcal{F}_n=\zeta_{\mathbb{A}}(2)^{-1}q^{2n}$ and $\#\mathcal{I}_n=2\zeta_{\mathbb{A}}(2)^{-1}q^{2n-1}$. 
\end{lemma}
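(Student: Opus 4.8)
The plan is to translate each of the four cardinalities into a count of monic polynomials (or of tuples of residues attached to them), and then evaluate the resulting sums by multiplicativity, using the standard identity $\sum_{N}\phi(N)|N|^{-s}=\zeta_{\mathbb{A}}(s-1)\zeta_{\mathbb{A}}(s)^{-1}$. Throughout I write $\mathfrak{v}_\infty$ for the normalised valuation at the infinite prime $\infty=(1/T)$, which is positive on nonzero proper rational functions (numerator of smaller degree than denominator) and non-positive on nonzero polynomials. \textbf{The sets $\mathcal{B}_n$ and $\mathcal{E}_n$.} First I would observe that $M\mapsto\tilde M=\sqrt{t(M)}$ is a bijection from $\mathcal{B}_n$ onto $\mathbb{A}^+_n$: if $M=\prod_iP_i^{e_i}$ with every $e_i$ odd, then $\tilde M=\prod_iP_i^{(e_i+1)/2}$ is an arbitrary monic polynomial of degree $\frac{1}{2}\deg t(M)=n$, with inverse $N=\prod_iP_i^{f_i}\mapsto\prod_iP_i^{2f_i-1}$. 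Hence $\#\mathcal{B}_n=\#\mathbb{A}^+_n=q^n$. For $\mathcal{E}_n$, for each fixed $M\in\mathcal{B}_n$ the defining data $(A_{P,i})_{P\mid M,\,1\leq i\leq\ell_P}$ runs over all residues of degree less than $\deg P$, so the number of admissible tuples is $\prod_{P\mid M}|P|^{\ell_P}=|\tilde M|$; summing over $M$ and using the bijection,
\begin{equation*}
\#\mathcal{E}_n=\sum_{M\in\mathcal{B}_n}|\tilde M|=\sum_{N\in\mathbb{A}^+_n}|N|=q^n\cdot q^n=q^{2n}.
\end{equation*}

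\textbf{The set $\mathcal{F}_n$.} Here I would impose the condition $A_{P,\ell_P}\neq0$, which, as observed just before the statement, is equivalent to $\gcd(D,M)=1$; thus $M$ is the denominator of $D/M$ in lowest terms and is recovered from the rational function, so nothing is over-counted. For fixed $M\in\mathcal{B}_n$ the count becomes $\prod_{P\mid M}\bigl(|P|^{\ell_P}-|P|^{\ell_P-1}\bigr)=\phi(\tilde M)$, hence $\#\mathcal{F}_n=\sum_{N\in\mathbb{A}^+_n}\phi(N)$. Since $\sum_N\phi(N)|N|^{-s}=\zeta_{\mathbb{A}}(s-1)\zeta_{\mathbb{A}}(s)^{-1}=(1-q^{1-s})/(1-q^{2-s})$, extracting the coefficient of $q^{-sn}$ for $n\geq1$ gives $\#\mathcal{F}_n=q^{2n}-q^{2n-1}=(1-q^{-1})q^{2n}=\zeta_{\mathbb{A}}(2)^{-1}q^{2n}$, using $\zeta_{\mathbb{A}}(2)=(1-q^{-1})^{-1}$.

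\textbf{The set $\mathcal{I}_n$.} I would first check that $\mathcal{I}_n=\bigcup_{r+s=n,\,s\geq1}\mathcal{I}_{(r,s)}$ is a disjoint union and that every $w\in\mathcal{I}_{(r,s)}$ has a unique decomposition $w=u+F$ with $u\in\mathcal{F}_r$ (with $u=0$ when $r=0$, as $\mathcal{F}_0=\{0\}$) and $F\in\mathcal{G}_s$: indeed $u$ is the proper part of $w$, singled out by $\mathfrak{v}_\infty(u)>0$, and $F$ its polynomial part, so both are recovered from $w$, while $\deg t(M)=2r$ for the lowest-terms denominator $M$ of $u$, together with $\deg F=2s-1$, pins down $(r,s)$. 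Hence $\#\mathcal{I}_{(r,s)}=\#\mathcal{F}_r\cdot\#\mathcal{G}_s$, with $\#\mathcal{G}_s=2(q-1)q^{s-1}$ (choose $\alpha\in\{0,\xi\}$, $\alpha_s\in\mathbb{F}_q\backslash\{0\}$, and $\alpha_1,\dots,\alpha_{s-1}\in\mathbb{F}_q$ freely) and $\#\mathcal{F}_0=1$, $\#\mathcal{F}_r=(q-1)q^{2r-1}$ for $r\geq1$. Separating the term $r=0$ and summing the geometric series over $r=1,\dots,n-1$ yields
\begin{equation*}
\#\mathcal{I}_n=2(q-1)q^{n-1}+\sum_{r=1}^{n-1}2(q-1)^2q^{n+r-2}=2(q-1)q^{2n-2}=2\zeta_{\mathbb{A}}(2)^{-1}q^{2n-1},
\end{equation*}
using $\zeta_{\mathbb{A}}(2)^{-1}=1-q^{-1}$.

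The polynomial counting and the coefficient extraction are routine; the delicate point throughout is verifying that each of these parametrisations really is a bijection, so that nothing is over-counted. This rests on the uniqueness of the normalisation in the preceding lemma — equivalently, on uniqueness of partial-fraction expansions together with the coprimality condition defining $\mathcal{F}$ — and, in the last case, on the unique splitting of a rational function into its proper and polynomial parts. I expect the bookkeeping for $\mathcal{I}_n$ (and the care needed to identify $\mathcal{F}_r$ and $\mathcal{G}_s$ inside $\mathcal{I}_{(r,s)}$) to be the main obstacle.
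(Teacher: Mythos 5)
Your proof is correct. The paper itself gives no argument for this lemma --- it is quoted verbatim from Bae and Jung \cite{Bae2018} --- so there is nothing to compare against, but your parametrisations (the bijection $M\mapsto\tilde M$ onto $\mathbb{A}^+_n$, the count $\phi(\tilde M)$ of coprime partial-fraction data, the evaluation of $\sum_{N\in\mathbb{A}^+_n}\phi(N)$, and the convolution of $\#\mathcal{F}_r$ with $\#\mathcal{G}_s$ over $r+s=n$) are exactly the standard computations behind the cited result, and all four totals check out. One remark: for $\mathcal{E}_n$ the map from pairs $(M,(A_{P,i}))$ to rational functions is \emph{not} injective (e.g.\ the data $(A_{P,1},0)$ over $M=P^3$ and over $M=PQ$ with $\deg Q=\deg P$ give the same element $A_{P,1}/P$), so $\#\mathcal{E}_n=q^{2n}$ should be read as a count of the defining data rather than of distinct rational functions; this is an ambiguity in the definitions rather than a flaw in your argument, and it disappears for $\mathcal{F}_n$ and $\mathcal{I}_n$ --- the only counts the paper actually uses --- where, as you note, the coprimality condition and the proper/polynomial-part splitting make the parametrisations genuinely bijective.
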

\begin{defn}
Let $P\in\mathcal{P}$. For $u\in k$ whose denominator is not divisible by $P$, the Haase symbol $[u,P)$ with values in $\mathbb{F}_2$ is defined by
\[
[u,P)=
\begin{cases}
0&\text{if } X^2+X\equiv u(\text{mod }P)\text{ is solvable in }\mathbb{A},\\
1&\text{otherwise}.
\end{cases}
\]
\end{defn}
\begin{defn}
For $N\in\mathbb{A}$ prime to the denominator of $u$, write $N=sgn(N)\prod_{i=1}^sP_i^{e_i}$, where $P_i\in\mathcal{P}$ are distinct and $e_i\geq 1$. Then we define $[u,N)$ to be equal to $\sum_{i=1}^se_i[u,P_i)$.
\end{defn}
\begin{defn}
For $u\in k$ and $0\neq N\in\mathbb{A}$, we define the quadratic symbol $\left\{\frac{u}{N}\right\}$ by
\[
\left\{\frac{u}{N}\right\}=
\begin{cases}
(-1)^{[u,N)}&\text{if } N \text{ is prime to the denominator of } u,\\
0&\text{otherwise}.
\end{cases}
\]
\end{defn}
\begin{remark}
The symbol $[u,N)$ is additive and the quadratic symbol $\left\{\frac{u}{N}\right\}$ is multiplicative. 
\end{remark}
\begin{defn}
For the field $K_u$, the character $\chi_u$ in $\mathbb{A}^+$ is defined by $\chi_u(f)=\left\{\frac{u}{f}\right\}$. For $\Re(s)>1$, the L-function associated with the character $\chi_u$ is defined by 
\begin{equation}
    L(s,\chi_u):=\sum_{f\in\mathbb{A}^+}\frac{\chi_u(f)}{|f|^s}=\prod_P\left(1-\frac{\chi_u(P)}{|P|^s}\right)^{-1}.
\end{equation}
\end{defn}
\par\noindent
Using the change of variable $z=q^{-s}$, we have 
\begin{equation*}
    \mathcal{L}(z,\chi_u)=\sum_{f\in\mathbb{A}^+}\chi_u(f)z^{\text{deg}(f)}=\prod_P\left(1-\chi_u(P)z^{\text{deg}(P)}\right)^{-1}.
\end{equation*}
We have that $\mathcal{L}(z,\chi_u)$ has a trivial zero at $z=1$ (respectively $z=-1)$ if and only if $K_u$ is real (respectively inert imaginary). We define the completed L-function, $\mathcal{L}^*(z,\chi_u)$ as 
\[
\mathcal{L}^*(z,\chi_u)=
\begin{cases}
\mathcal{L}(z,\chi_u)&\text{if }K_u\text{ is ramified imaginary},\\
(1-z)^{-1}\mathcal{L}(z,\chi_u)&\text{if }K_u\text{ is real},\\
(1+z)^{-1}\mathcal{L}(z,\chi_u)&\text{if }K_u\text{ is inert imaginary},
\end{cases}
\]
which is a polynomial of even degree $2g$ satisfying the functional equation
\begin{equation*}
    \mathcal{L}^*(z,\chi_u)=(qz^2)^g\mathcal{L}^*((qz)^{-1},\chi_u).
\end{equation*}
\par\noindent
In \cite{Chen2008}, Chen extended the result of Hoffstein and Rosen \cite{Hoffstein1992} to even characteristic. More precisely, Chen obtained formulas for average values of L-functions associated to orders in quadratic function fields over $\mathbb{F}_q$ and then derive formulas of average class numbers of these orders. \\
\par\noindent
In \cite{Bae2018}, Bae and Jung obtained an asymptotic formula for the first moment of Dirichlet L-functions at $s\in\mathbb{C}$ with $\Re(s)\geq \frac{1}{2}$. For convenience, we will only state their result when $s=\frac{1}{2}$.
\begin{thm}
Suppose that $q$ is a power of 2. Then we have
\begin{enumerate}
    \item 
    \begin{equation}\label{eq:2.13}
    \sum_{u\in\mathcal{I}_{g+1}}L\left(\frac{1}{2},\chi_u\right)=2\frac{P(1)}{\zeta_{\mathbb{A}}(2)}q^{2g+1}\left[g+1+\frac{2}{\log q}\frac{P'}{P}(1)\right]+O(g2^{\frac{g}{2}}q^{\frac{3g}{2}}). 
    \end{equation}
    \item
    \begin{equation}
        \sum_{u\in\mathcal{F}_{g+1}}L\left(\frac{1}{2},\chi_u\right)=\frac{P(1)}{\zeta_{\mathbb{A}}(2)}q^{2g+2}\left[g+1+\zeta_{\mathbb{A}}\left(\frac{1}{2}\right)+\frac{2}{\log q}\frac{P'}{P}(1)\right]+O(2^{\frac{g}{2}}q^{\frac{3g}{2}}).
    \end{equation}
    \item 
    \begin{equation}
        \sum_{u\in\mathcal{F}'_{g+1}}L\left(\frac{1}{2},\chi_u\right)=\frac{P(1)}{\zeta_{\mathbb{A}}(2)}q^{2g+2}\left[g+1+\frac{\zeta_{\mathbb{A}}(0)}{\zeta_{\mathbb{A}}\left(\frac{1}{2}\right)}+\frac{2}{\log q}\frac{P'}{P}(1)\right]+O(2^{\frac{g}{2}}q^{\frac{3g}{2}}).
    \end{equation}
\end{enumerate}
\end{thm}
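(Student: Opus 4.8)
The plan is to establish all three asymptotics by one mechanism, handling the real, inert imaginary and ramified imaginary families in parallel while keeping track of the behaviour at the infinite prime, which is what produces the differing secondary constants. The first step is to replace $L\left(\tfrac12,\chi_u\right)$ by a short Dirichlet polynomial. Since $\mathcal{L}^*(z,\chi_u)$ is a polynomial in $z$ of degree $2g$ satisfying $\mathcal{L}^*(z,\chi_u)=(qz^2)^g\mathcal{L}^*((qz)^{-1},\chi_u)$, evaluating at $z=q^{-1/2}$ and folding the tail of $\mathcal{L}^*$ against its functional equation yields an ``approximate functional equation'' of the shape
\[
L\left(\tfrac12,\chi_u\right)=\sum_{\substack{f\in\mathbb{A}^+\\ \deg f\le g}}\frac{\chi_u(f)}{|f|^{1/2}}+(\text{dual sum of length }\le g)+(\text{boundary term}),
\]
where the precise truncation length and the boundary term depend on whether $\mathcal{L}^*$ is obtained from $\mathcal{L}$ directly (ramified case) or after removing the factor $(1-z)^{-1}$ or $(1+z)^{-1}$ (real, resp.\ inert imaginary); in the latter two cases this factor, evaluated at $z=q^{-1/2}$, is the source of the $\zeta_{\mathbb{A}}\left(\tfrac12\right)$ and $\zeta_{\mathbb{A}}(0)/\zeta_{\mathbb{A}}\left(\tfrac12\right)$ constants.

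Second, I would interchange summation and write $\sum_u L\left(\tfrac12,\chi_u\right)=\sum_{\deg f\le g}|f|^{-1/2}S_f+\dots$, where $S_f:=\sum_u\chi_u(f)=\sum_u\left\{\tfrac{u}{f}\right\}$ is a character sum over the relevant family, and split according to whether $f$ is a perfect square. When $f=\ell^2$ one has $\chi_u(\ell^2)=\mathbf{1}[\gcd(\ell,D_u)=1]$, so these terms give the main term: isolating the coprimality condition by Möbius inversion over the divisors of $\ell$ and using the cardinalities $\#\mathcal{F}_n=\zeta_{\mathbb{A}}(2)^{-1}q^{2n}$, $\#\mathcal{F}'_n=\zeta_{\mathbb{A}}(2)^{-1}q^{2n}$ and $\#\mathcal{I}_n=2\zeta_{\mathbb{A}}(2)^{-1}q^{2n-1}$ together with the counts of $u$ in the family whose conductor is divisible by a given polynomial, one is left with $\sum_{\deg\ell\le g/2}|\ell|^{-1}$ times an Euler product; since there are $q^m$ monic $\ell$ of degree $m$, this sum is of the form $\tfrac{g}{2}+O(1)$, which furnishes the $g+1$ term, while the Euler product assembles into $P(1)/\zeta_{\mathbb{A}}(2)$ and, after differentiating the Möbius factor, the term $\tfrac{2}{\log q}\tfrac{P'}{P}(1)$. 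The dual sum contributes, for square $f$, a further lower-order main term of the same structure (shorter by one in degree), and its combination with the boundary term accounts for the remaining constants, including the factor $2$ and the shorter support in case (1) coming from $\#\mathcal{I}_{g+1}$ and the absence of a completion factor there.

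The remaining, and hardest, step is the non-square contribution: one must show that $S_f$ exhibits square-root cancellation when $f$ is not a perfect square. For this I would use the quadratic reciprocity law for the Hasse symbol $[u,\cdot)$ in even characteristic together with the associated Poisson-type summation formula over the families $\mathcal{F}_n$, $\mathcal{F}'_n$ and $\mathcal{I}_n$ (the even-characteristic analogue of the Gauss-sum machinery used by Andrade and Keating in odd characteristic), which rewrites $S_f$ as a dual sum supported on a sparse set and of size $O(q^{g})$ up to bounded factors. Summing the geometric weights $\sum_{\deg f=n}|f|^{-1/2}\asymp q^{n/2}$ against this bound over a family of size $\asymp q^{2g}$ produces the claimed error $O(2^{g/2}q^{3g/2})$; in case (1) the family $\mathcal{I}_n$ is a union of $\asymp n$ subfamilies $\mathcal{I}_{(r,s)}$ with $r+s=n$, which costs an extra factor $g$, explaining the $O(g\,2^{g/2}q^{3g/2})$ error there, and the dual sum and boundary terms are estimated in the same way. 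The main obstacle is precisely this evaluation: establishing the even-characteristic reciprocity and Poisson formula in exactly the form needed, and separating from the dual sum the genuine secondary main terms (the $\zeta_{\mathbb{A}}(\tfrac12)$ and $\zeta_{\mathbb{A}}(0)/\zeta_{\mathbb{A}}(\tfrac12)$ contributions) so that they are not lost into the error.
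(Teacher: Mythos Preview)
The paper does not prove this theorem: it is quoted verbatim from Bae and Jung \cite{Bae2018} (stated there for general $s$ with $\Re(s)\ge\tfrac12$; the paper specialises to $s=\tfrac12$). So there is no ``paper's own proof'' to compare against. What the paper does carry over from \cite{Bae2018} are exactly the three ingredients your sketch would need for part~(1): the approximate functional equation for $u\in\mathcal I_{g+1}$ (Lemma~4.1), the sieve count for $\sum_{(f,L)=1}\phi(f)$ (Lemma~4.2), and the pointwise bound $\sum_{u\in\mathcal I_{g+1}}\chi_u(f)\ll g\,2^{d/2}q^{g}$ for non-square $f$ of degree $d$ (Proposition~4.3). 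Summing the last bound against $|f|^{-1/2}$ over $\deg f\le g$ gives precisely $O(g\,2^{g/2}q^{3g/2})$, so your outline for the ramified family is sound and matches the quoted inputs.

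Two remarks on where your sketch drifts from what is actually done. First, for the non-square error you invoke a ``Poisson-type summation formula'' and quadratic reciprocity for the Hasse symbol; the paper does not use or need this, and in \cite{Bae2018} the bound of Proposition~4.3 is obtained by direct Artin--Schreier character-sum estimates over the subfamilies $\mathcal I_{(r,s)}$, not by Poisson summation. Your explanation that the extra factor $g$ in case~(1) comes from the decomposition $\mathcal I_{g+1}=\bigcup_{r+s=g+1}\mathcal I_{(r,s)}$ is, however, exactly right. Second, your account of where the secondary constants $\zeta_{\mathbb A}(\tfrac12)$ and $\zeta_{\mathbb A}(0)/\zeta_{\mathbb A}(\tfrac12)$ come from is heuristic (``the boundary term from the $(1\mp z)^{-1}$ factor''); in \cite{Bae2018} these arise from the exact form of the approximate functional equation for the real and inert imaginary families, which differs from Lemma~4.1 by precisely those extra trivial-zero factors, and turning this into the stated constants requires a genuine computation rather than a one-line remark. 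None of this is a gap in the sense of a wrong idea, but the last two steps are where the real content of the Bae--Jung paper lies, and your proposal treats them as routine.
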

\par\noindent
When the denominator of $u$ is a monic irreducible polynomial, Andrade, Bae and Jung \cite{Andrade2016} obtained asymptotic formulas for the first and second moments of $L\left(\frac{1}{2},\chi_u\right)$ when the sum is over all $u\in\mathcal{I}_{g+1}, u\in\mathcal{F}_{g+1}$ and $u\in\mathcal{F}'_{g+1}$. 
\section{Statements of Main Results}
In this article, we develop to even characteristic the heuristic developed in \cite{Conrey2005,Conrey2008a,Andrade2014,AndradeJungShamesaldeen2018}. The main results are the following Conjectures. 
\begin{conj}
Suppose that $q$ is a power of 2 which is the fixed cardinality of the finite field $\mathbb{F}_q$ and let
\begin{equation}
    \mathcal{X}_u(s)=(q^{2g+1})^{\frac{1}{2}-s}X(s),
\end{equation}
where $X(s)=q^{-\frac{1}{2}+s}$. That is $\mathcal{X}_u(s)$ is the factor of the functional equation
\begin{equation}
    L(s,\chi_u)=\mathcal{X}_u(s)L(1-s,\chi_u).
\end{equation}
Summing over fundamental discriminants $u\in\mathcal{I}_{g+1}$, we have
\begin{equation}
    \sum_{u\in\mathcal{I}_{g+1}}L\left(\frac{1}{2},\chi_u\right)^k=\sum_{u\in\mathcal{I}_{g+1}}Q_k(2g+1)(1+o(1)),
\end{equation}
where $Q_k(x)$ is the polynomial of degree $\frac{1}{2}k(k+1)$ given by the $k$-fold residue
\begin{equation}
    Q_k(x)=\frac{(-1)^{\frac{k(k-1)}{2}}2^k}{k!}\frac{1}{(2\pi i)^k}\oint\dotsc\oint\frac{G(z_1,\dotsc,z_k)\Delta(z_1^2,\dotsc,z_k^2)^2}{\prod_{j=1}^kz_j^{2k-1}}q^{\frac{x}{2}\sum_{j=1}^kz_j}dz_1\dotsc dz_k,
\end{equation}
$\Delta(z_1,\dotsc,z_k)$ is the Vandermonde determinant given by
\begin{equation*}
    \Delta(z_1,\dotsc,z_k)=\prod_{1\leq i<j\leq k}(z_j-z_i),
\end{equation*}
\begin{equation*}
    G(z_1,\dotsc,z_k)=A\left(\frac{1}{2};z_1,\dotsc,z_k\right)\prod_{j=1}^kX\left(\frac{1}{2}+z_j\right)^{-\frac{1}{2}}\prod_{1\leq i\leq j\leq k}\zeta_{\mathbb{A}}(1+z_i+z_j)
\end{equation*}
and $A\left(\frac{1}{2};z_1,\dotsc,z_k\right)$ is the Euler product, absolutely convergent for $|\Re(z_j)|<\frac{1}{2}$ defined by
\begin{align}
    A\left(\frac{1}{2};z_1,\dotsc,z_k\right)&=\prod_P\prod_{1\leq i\leq j\leq k}\left(1-\frac{1}{|P|^{1+z_i+z_j}}\right)\nonumber\\
    &\times \left(\frac{1}{2}\left(\prod_{j=1}^k\left(1-\frac{1}{|P|^{\frac{1}{2}+z_j}}\right)^{-1}+\prod_{j=1}^k\left(1+\frac{1}{|P|^{\frac{1}{2}+z_j}}\right)^{-1}\right)+\frac{1}{|P|}\right)\left(1+\frac{1}{|P|}\right)^{-1}.
\end{align}
\end{conj}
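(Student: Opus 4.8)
The plan is to adapt the CFKRS recipe, as implemented in the function field setting by Andrade--Keating and Andrade--Jung--Shamesaldeen, to the even-characteristic family $\{L(s,\chi_u):u\in\mathcal{I}_{g+1}\}$. The starting point is the approximate functional equation for $L(\tfrac12,\chi_u)^k$: using the functional equation $L(s,\chi_u)=\mathcal{X}_u(s)L(1-s,\chi_u)$ with $\mathcal{X}_u(s)=(q^{2g+1})^{1/2-s}X(s)$, $X(s)=q^{-1/2+s}$, one writes $L(\tfrac12,\chi_u)^k$ as a sum of $2^k$ pieces indexed by $\epsilon\in\{-1,1\}^k$, each a product $\prod_j L(\tfrac12+z_j,\chi_u)$ with the shifts $z_j$ sent to $-z_j$ according to $\epsilon_j$, weighted by $\prod_{j}X(\tfrac12+z_j)^{-1/2}$ (this is where the $\prod_j X(\tfrac12+z_j)^{-\frac12}$ factor in $G$ originates). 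Expanding each shifted $L$-function as a Dirichlet series $\sum_{f\in\mathbb{A}^+}\chi_u(f)|f|^{-1/2-z_j}$ reduces the problem to averaging $\chi_u(f_1\cdots f_k)$ over $u\in\mathcal{I}_{g+1}$.

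The second step is the character-sum average. One needs the analogue, for the family $\mathcal{I}_{g+1}$ of ramified imaginary extensions, of the standard fact that $\frac{1}{\#\mathcal{I}_{g+1}}\sum_{u\in\mathcal{I}_{g+1}}\chi_u(f)$ is, up to small error, the indicator that $f$ is a perfect square times a local density factor. Here I would invoke the structure theory recorded in Section 2.3 (Lemma 2.2, Lemma 2.3, the Hasse symbol $[u,P)$ and its additivity/multiplicativity, and the count $\#\mathcal{I}_n = 2\zeta_{\mathbb{A}}(2)^{-1}q^{2n-1}$) together with the first-moment computation of Bae--Jung (Theorem 2.11(1)) as the model for how the main term behaves: the diagonal $f=\square$ contributes the main term, and the off-diagonal contributes the error. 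Replacing the resulting sum over squares $f=\ell^2$ by its Euler product yields, for each $\epsilon$, a factor $\prod_{1\le i\le j\le k}\zeta_{\mathbb{A}}(1+z_i^{\epsilon}+z_j^{\epsilon})$ (from the poles of the diagonal) times an absolutely convergent arithmetic factor which, after the standard local computation at each prime $P$ — averaging $(-1)^{[u,P)}$ over the relevant residues, which produces the $\tfrac12\big(\prod_j(1-|P|^{-1/2-z_j})^{-1}+\prod_j(1+|P|^{-1/2-z_j})^{-1}\big)+|P|^{-1}$ combination divided by $(1+|P|^{-1})$ — equals exactly $A(\tfrac12;z_1,\dots,z_k)$ as defined in the statement.

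The third step is to assemble the $2^k$ terms. Writing $G(z_1,\dots,z_k)=A(\tfrac12;z)\prod_j X(\tfrac12+z_j)^{-1/2}\prod_{i\le j}\zeta_{\mathbb{A}}(1+z_i+z_j)$ and summing over $\epsilon\in\{-1,1\}^k$, the combinatorial sum $\sum_\epsilon (\cdots)$ is recognised, by the now-standard CFKRS lemma on symmetric-function sums, as a $k$-fold contour integral: one introduces variables $z_1,\dots,z_k$ on small circles around the origin, and the sum over sign choices together with the factors $q^{\frac{x}{2}\sum \epsilon_j z_j}$ (where $x=2g+1=\log_q(q^{2g+1})$ is the log-conductor) collapses into the residue formula with the Vandermonde-squared factor $\Delta(z_1^2,\dots,z_k^2)^2$ and the denominator $\prod_j z_j^{2k-1}$, with the combinatorial constant $\frac{(-1)^{k(k-1)/2}2^k}{k!}$. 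This is purely formal once the symmetry of $G$ under $z_j\mapsto -z_j$ (which holds because $X(\tfrac12+z)^{-1/2}X(\tfrac12-z)^{-1/2}$ and $\zeta_{\mathbb{A}}(1+z_i+z_j)$ rearrange correctly, exactly as in the odd case) is checked, giving $Q_k(2g+1)$ as stated.

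The main obstacle is the second step: making the off-diagonal character-sum error genuinely $o(1)$ relative to the main term (i.e. controlling $\sum_{u\in\mathcal{I}_{g+1}}\chi_u(f)$ for $f$ non-square with $\deg f$ up to roughly $k(2g+1)$), since the Dirichlet series must be truncated at a length growing with $g$ and the Bae--Jung machinery in Theorem 2.11 was designed for $k=1$. In the heuristic-conjecture framework of CFKRS one does not rigorously bound this error — one drops the off-diagonal terms by fiat and asserts the result holds with a $(1+o(1))$; accordingly, the "proof" here is the recipe computation, and I would present it as such, matching the structure of \cite{Andrade2014, AndradeJungShamesaldeen2018}, and if space permits verify consistency with Theorem 2.11(1) by specialising to $k=1$.
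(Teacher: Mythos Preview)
Your proposal is correct and follows essentially the same route as the paper: the paper carries out exactly the CFKRS recipe you describe (approximate functional equation $\to$ character average $\to$ Euler product factorization $\to$ contour integral via the combinatorial lemma), with the even-characteristic input being Lemma~4.6 (the precise computation of $\lim_{g\to\infty}\frac{1}{\#\mathcal{I}_{g+1}}\sum_{u\in\mathcal{I}_{g+1}}\chi_u(m)$ via the decomposition $\mathcal{I}_{(r,g+1-r)}=\bigcup_{M\in\mathcal{B}_r}\mathcal{I}_M$, Lemma~4.2, and Proposition~4.3), and the paper likewise verifies the $k=1$ case against Theorem~2.10(1) in Section~5.1. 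One small imprecision: the applicability of the CFKRS contour-integral lemma (Lemma~4.8 in the paper) rests on the permutation symmetry and regularity of the arithmetic factor $F$, not on symmetry of $G$ under $z_j\mapsto -z_j$; the sum over $\epsilon\in\{-1,1\}^k$ is exactly what the lemma converts into the integral.
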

\begin{conj}
Suppose that the real parts of $\alpha_k$ and $\gamma_q$ are positive and that $q$ is a power of 2 which is the fixed cardinality of the finite field $\mathbb{F}_q$. Let $\mathcal{U}=\{L(s,\chi_u):u\in\mathcal{I}_{g+1}\}$ be the family of L-functions associated with the quadratic character $\chi_u$. Then using the same notation as in the previous conjecture, we have
\begin{align*}
    &\sum_{u\in\mathcal{I}_{g+1}}\frac{\prod_{k=1}^KL\left(\frac{1}{2}+\alpha_k,\chi_u\right)}{\prod_{q=1}^QL\left(\frac{1}{2}+\gamma_q,\chi_u\right)}\\
    &=\sum_{u\in\mathcal{I}_{g+1}}\sum_{\epsilon\in\{-1,1\}^K}\left(q^{2g+1}\right)^{\frac{1}{2}\sum_{k=1}^K(\epsilon_k\alpha_k-\alpha_k)}\prod_{k=1}^KX\left(\frac{1}{2}+\frac{\alpha_k-\epsilon_k\alpha_k}{2}\right)\\
    &\times A_{\mathcal{U}}(\epsilon_1\alpha_1,\dotsc,\epsilon_K\alpha_K;\gamma)Y_{\mathcal{U}}(\epsilon_1\alpha_1,\dotsc,\epsilon_K\alpha_K;\gamma)+o(q^{2g+1}),
\end{align*}
where
\begin{align*}
    A_{\mathcal{U}}(\alpha;\gamma)&=\prod_P\frac{\prod_{1\leq j\leq k\leq K}\left(1-\frac{1}{|P|^{1+\alpha_j+\alpha_k}}\right)\prod_{1\leq q<r\leq Q}\left(1-\frac{1}{|P|^{1+\gamma_q+\gamma_r}}\right)}{\prod_{k=1}^K\prod_{q=1}^Q\left(1-\frac{1}{|P|^{1+\alpha_k+\gamma_q}}\right)}\\
    &\times\left(1+\left(1+\frac{1}{|P|}\right)^{-1}\sum_{0<\sum_ka_k+\sum_qc_q\text{ is even}}\frac{\prod_{q=1}^Q\mu(P^{c_q})}{|P|^{\sum_ka_k\left(\frac{1}{2}+\alpha_k\right)+\sum_qc_q\left(\frac{1}{2}+\gamma_q\right)}}\right)
\end{align*}
and
\begin{equation*}
    Y_{\mathcal{U}}(\alpha;\gamma)=\frac{\prod_{1\leq j\leq k\leq K}\zeta_{\mathbb{A}}(1+\alpha_j+\alpha_k)\prod_{1\leq q<r\leq Q}\zeta_{\mathbb{A}}(1+\gamma_q+\gamma_r)}{\prod_{k=1}^K\prod_{q=1}^Q\zeta_{\mathbb{A}}(1+\alpha_k+\gamma_q)}.
\end{equation*}
If we let
\begin{equation*}
    H_{\mathcal{I},\alpha,\gamma}(w)=\left(q^{2g+1}\right)^{\frac{1}{2}\sum_{k=1}^Kw_k}\prod_{k=1}^KX\left(\frac{1}{2}+\frac{\alpha_k-w_k}{2}\right)A_{\mathcal{U}}(w_1,\dotsc,w_K;\gamma)Y_{\mathcal{U}}(w_1,\dotsc,w_K;\gamma),
\end{equation*}
then the conjecture may be formulated as
\begin{align*}
&\sum_{u\in\mathcal{I}_{g+1}}\frac{\prod_{k=1}^KL\left(\frac{1}{2}+\alpha_k,\chi_u\right)}{\prod_{q=1}^QL\left(\frac{1}{2}+\gamma_q,\chi_u\right)}\\
&=\sum_{u\in\mathcal{I}_{g+1}}(q^{2g+1})^{-\frac{1}{2}\sum_{k=1}^K\alpha_k}\sum_{\epsilon\in\{-1,1\}^K}H_{\mathcal{I},\alpha,\gamma}(\epsilon_1\alpha_1,\dotsc,\epsilon_K\alpha_K)+o(q^{2g+1}).
\end{align*}
\end{conj}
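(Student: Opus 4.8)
The statement is a conjecture produced by the Conrey--Farmer--Zirnbauer ``recipe'', so what is called for is a heuristic derivation, parallel to the odd-characteristic Conjecture 2.3 of Andrade--Keating and its $\mathbb{A}$-analogue, rather than a rigorous proof; the plan is to run that recipe, making the adjustments forced by even characteristic. Write $R(\alpha;\gamma)=\prod_{k=1}^{K}L(\tfrac12+\alpha_k,\chi_u)/\prod_{q=1}^{Q}L(\tfrac12+\gamma_q,\chi_u)$. \emph{Step 1:} For the numerator I would use the functional equation $L(s,\chi_u)=\mathcal{X}_u(s)L(1-s,\chi_u)$ together with the fact that $\mathcal{L}^{*}(z,\chi_u)$ is a polynomial of degree $2g$ to write, for each $k$, an essentially exact approximate functional equation: $L(\tfrac12+\alpha_k,\chi_u)$ equals a Dirichlet polynomial $\sum_{f\in\mathbb{A}^{+}}\chi_u(f)|f|^{-1/2-\alpha_k}$ of length about $q^{g}$ plus $\mathcal{X}_u(\tfrac12+\alpha_k)$ times the same expression with $\alpha_k$ replaced by $-\alpha_k$; for the denominator, expand $L(\tfrac12+\gamma_q,\chi_u)^{-1}=\sum_{h\in\mathbb{A}^{+}}\mu(h)\chi_u(h)|h|^{-1/2-\gamma_q}$. \emph{Step 2:} Multiplying out the $K$ approximate functional equations yields $2^{K}$ terms indexed by $\epsilon\in\{-1,1\}^{K}$, the $k$-th factor carrying $\mathcal{X}_u(\tfrac12+\alpha_k)$ precisely when $\epsilon_k=-1$. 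Since $\mathcal{X}_u(\tfrac12+\alpha_k)=(q^{2g+1})^{-\alpha_k}X(\tfrac12+\alpha_k)$ with $X(s)=q^{-1/2+s}$ independent of $u$, the $u$-independent data assembles into the prefactor $(q^{2g+1})^{\frac12\sum_k(\epsilon_k\alpha_k-\alpha_k)}\prod_{k=1}^{K}X(\tfrac12+\tfrac{\alpha_k-\epsilon_k\alpha_k}{2})$, and each $\epsilon$-piece becomes, up to this prefactor, $\sum_{u\in\mathcal{I}_{g+1}}\sum_{f_1,\dots,f_K,h_1,\dots,h_Q}\frac{\prod_{q}\mu(h_q)}{\prod_k|f_k|^{1/2+\epsilon_k\alpha_k}\prod_q|h_q|^{1/2+\gamma_q}}\chi_u(f_1\cdots f_Kh_1\cdots h_Q)$.

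\emph{Step 3:} Interchanging the sums, one must evaluate $\sum_{u\in\mathcal{I}_{g+1}}\chi_u(N)$ for $N=f_1\cdots f_Kh_1\cdots h_Q$. Using the orthogonality for the characters $\chi_u$ over $\mathcal{I}_{g+1}$ developed by Bae--Jung, this average is, to leading order, $\#\mathcal{I}_{g+1}\cdot\prod_{P\mid N}(1+|P|^{-1})^{-1}$ when $N$ is a perfect square and contributes only to the error term otherwise, where $\#\mathcal{I}_{g+1}=2\zeta_{\mathbb{A}}(2)^{-1}q^{2g+1}$ (Lemma 2.4). The condition that the exponent of each prime $P$ in $N$ be even is exactly the constraint ``$\sum_k a_k+\sum_q c_q$ even'' appearing in $A_{\mathcal{U}}$, and the weight $(1+|P|^{-1})^{-1}$ is the source of the corresponding factor there.

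\emph{Step 4:} After extending the $f$- and $h$-sums to infinity (the tails being absorbed into $o(q^{2g+1})$), the remaining sum over square arguments is multiplicative in $N$ and factors as an Euler product whose local factor at $P$ is $1+(1+|P|^{-1})^{-1}\sum_{0<\sum_k a_k+\sum_q c_q\text{ even}}\prod_{q}\mu(P^{c_q})\,|P|^{-\sum_k a_k(1/2+\epsilon_k\alpha_k)-\sum_q c_q(1/2+\gamma_q)}$ (the ``$0<$'' excludes the all-zero term, which is the leading ``$1$'', and $\mu(P^{c_q})$ forces each $h_q$ squarefree). Separating the diagonal terms in which exactly two of the small exponents are nonzero produces the polar factors $\zeta_{\mathbb{A}}(1+\epsilon_i\alpha_i+\epsilon_j\alpha_j)$, $\zeta_{\mathbb{A}}(1+\gamma_q+\gamma_r)$ and $\zeta_{\mathbb{A}}(1+\epsilon_k\alpha_k+\gamma_q)^{-1}$, i.e.\ $Y_{\mathcal{U}}(\epsilon_1\alpha_1,\dots,\epsilon_K\alpha_K;\gamma)$; dividing the Euler product by these local zeta factors (the origin of the $(1-|P|^{-(1+\cdots)})$ factors in $A_{\mathcal{U}}$) leaves an Euler product that converges absolutely for small $\Re(\alpha_k),\Re(\gamma_q)$ and equals $A_{\mathcal{U}}(\epsilon_1\alpha_1,\dots,\epsilon_K\alpha_K;\gamma)$. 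Summing the $2^{K}$ pieces and then trivially over $u\in\mathcal{I}_{g+1}$ gives the asserted formula, and the reformulation via $H_{\mathcal{I},\alpha,\gamma}$ is a direct rewriting.

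The main obstacle is the one inherent to every CFZ-type statement: Steps 1--3 silently discard the off-diagonal contributions ($N$ not a perfect square), the cross terms between the two halves of each approximate functional equation, and the tails of the $f$- and $h$-series, none of which is known to be genuinely negligible, which is why the result must be stated as a conjecture. Within the heuristic itself, the point requiring care specific to even characteristic is Step 3: one must extract from the Bae--Jung orthogonality the precise main term of $\sum_{u\in\mathcal{I}_{g+1}}\chi_u(N)$ for square $N$ --- in particular the local weights $(1+|P|^{-1})^{-1}$ and the behaviour at primes dividing the denominators of the admissible $u$ --- and then check that on specialising $Q=0$, $\alpha_1=\dots=\alpha_K$ and differentiating one recovers the moment polynomial $Q_k$ of Conjecture 3.1.
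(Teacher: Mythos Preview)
Your heuristic derivation is correct and follows essentially the same route as the paper's Section~7: expand each numerator factor via the approximate functional equation and each denominator factor via the M\"obius series, sum over $\epsilon\in\{-1,1\}^K$, replace the average of $\chi_u(N)$ over $\mathcal{I}_{g+1}$ by $\prod_{P\mid N}(1+|P|^{-1})^{-1}$ for square $N$ (the paper's Lemma~4.6), and then factor the resulting Euler product as $Y_{\mathcal{U}}\cdot A_{\mathcal{U}}$. The only cosmetic difference is that the paper routes the numerator through the symmetrized function $Z_L(s,\chi_u)=\mathcal{X}_u(s)^{-1/2}L(s,\chi_u)$ and converts back at the end (their Lemma~7.1), whereas you carry $L$ directly and assemble the $\mathcal{X}_u$-prefactor in your Step~2; both yield the same $(q^{2g+1})^{\frac12\sum_k(\epsilon_k\alpha_k-\alpha_k)}\prod_kX(\tfrac12+\tfrac{\alpha_k-\epsilon_k\alpha_k}{2})$.
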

\section{Integral Moments of L-functions in Even characteristic}
In this section, we will present the details of the recipe for conjecturing moments of the family of Dirichlet L-functions $L(s,\chi_u)$ with $u\in\mathcal{I}_{g+1}$ as $g\rightarrow\infty$, where $\mathbb{F}_q$ is a fixed finite field with $q$ a power of 2. As in \cite{Andrade2014,AndradeJungShamesaldeen2018}, we will adjust the recipe first presented in \cite{Conrey2005} to the even characteristic setting. 
\subsection{Preliminary Lemmas}
Before we present the details, we state results which will be used later in the section.
\begin{lemma}[``Approximate" Functional Equation, \cite{Bae2018} Lemma 3.1]
Let $s\in\mathbb{C}$ with $\Re(s)\geq\frac{1}{2}$, then for $u\in\mathcal{I}$, we have
\begin{equation}
    L(s,\chi_u)=\sum_{f\in\mathbb{A}^+_{\leq g}}\frac{\chi_u(f)}{|f|^s}+\mathcal{X}_u(s)\sum_{f\in\mathbb{A}^+_{\leq g-1}}\frac{\chi_u(f)}{|f|^{1-s}},
\end{equation}
where $\mathcal{X}_u(s)=q^{(1-2s)g}$. 
\end{lemma}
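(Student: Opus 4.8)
The plan is to derive the stated identity directly from the functional equation for the completed $L$-function $\mathcal{L}^*(z,\chi_u)$ together with the fact that, for $u\in\mathcal{I}$ (ramified imaginary), $\mathcal{L}^*(z,\chi_u)=\mathcal{L}(z,\chi_u)$ is a polynomial in $z=q^{-s}$ of degree exactly $2g$. First I would record the two basic inputs from Section 2.3: (i) since $u\in\mathcal{I}_{g+1}$ corresponds to a ramified imaginary extension, there is no Euler factor at infinity, so $\mathcal{L}(z,\chi_u)=\sum_{f\in\mathbb{A}^+}\chi_u(f)z^{\deg f}$ truncates to a polynomial $\sum_{n=0}^{2g}\Big(\sum_{f\in\mathbb{A}^+_n}\chi_u(f)\Big)z^n$; and (ii) the functional equation $\mathcal{L}^*(z,\chi_u)=(qz^2)^g\,\mathcal{L}^*((qz)^{-1},\chi_u)$, which for $u\in\mathcal{I}$ reads $\mathcal{L}(z,\chi_u)=(qz^2)^g\,\mathcal{L}((qz)^{-1},\chi_u)$.

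Next I would split the polynomial $\mathcal{L}(z,\chi_u)=\sum_{n=0}^{2g}c_n(u)z^n$, where $c_n(u)=\sum_{f\in\mathbb{A}^+_n}\chi_u(f)$, into the "low" part $\sum_{n=0}^{g}c_n(u)z^n$ and the "high" part $\sum_{n=g+1}^{2g}c_n(u)z^n$. Applying the functional equation to the high part: writing $n=2g-m$ with $0\le m\le g-1$, the coefficient symmetry forces $c_{2g-m}(u)=q^{g-m}c_m(u)$ (this is exactly the coefficient form of $\mathcal{L}(z)=(qz^2)^g\mathcal{L}((qz)^{-1})$, comparing the coefficient of $z^{2g-m}$ on both sides). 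Substituting and re-indexing, the high part becomes $\sum_{m=0}^{g-1}q^{g-m}c_m(u)\,z^{2g-m}=q^{g}z^{2g}\sum_{m=0}^{g-1}c_m(u)(qz)^{-m}$. Now translating back to the variable $s$ via $z=q^{-s}$: $c_m(u)z^m=\sum_{f\in\mathbb{A}^+_m}\chi_u(f)q^{-ms}=\sum_{f\in\mathbb{A}^+_m}\chi_u(f)|f|^{-s}$, so the low part is $\sum_{f\in\mathbb{A}^+_{\le g}}\chi_u(f)|f|^{-s}$. For the high part, $q^{g}z^{2g}=q^{g}q^{-2gs}=q^{(1-2s)g}$, which is precisely the claimed $\mathcal{X}_u(s)$, and $\sum_{m=0}^{g-1}c_m(u)(qz)^{-m}=\sum_{f\in\mathbb{A}^+_{\le g-1}}\chi_u(f)q^{m(s-1)}=\sum_{f\in\mathbb{A}^+_{\le g-1}}\chi_u(f)|f|^{-(1-s)}$. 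Assembling the two pieces yields exactly the stated formula.

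Strictly, one should check the identity is first established for $\Re(s)>1$ (where the Dirichlet series converges absolutely and all manipulations are legitimate term-by-term), and then note that both sides are finite sums — polynomials in $q^{-s}$ — hence the identity extends to all $s\in\mathbb{C}$, in particular to the range $\Re(s)\ge\frac12$ claimed; this also subsumes the special value $s=\frac12$ used later, where $\mathcal{X}_u(\tfrac12)=1$. The only genuine subtlety — and the step I would be most careful about — is the bookkeeping of the coefficient symmetry $c_{2g-m}(u)=q^{g-m}c_m(u)$ and the precise index ranges ($\le g$ versus $\le g-1$), since an off-by-one there would misstate the truncation levels; everything else is a routine change of variable. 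Since this is quoted as Lemma 3.1 of \cite{Bae2018}, I would in practice simply cite it, but the argument above is the self-contained derivation.
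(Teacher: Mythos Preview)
The paper does not give its own proof of this lemma; it is simply stated and attributed to \cite{Bae2018}, Lemma 3.1. Your argument is correct and is the standard derivation: for $u\in\mathcal{I}$ the completed and uncompleted $L$-functions coincide as a polynomial of degree $2g$ in $z=q^{-s}$, the functional equation $\mathcal{L}(z,\chi_u)=(qz^2)^g\mathcal{L}((qz)^{-1},\chi_u)$ yields the coefficient symmetry $c_{2g-m}=q^{g-m}c_m$, and splitting the sum at degree $g$ gives exactly the two pieces claimed. This is almost certainly the argument in \cite{Bae2018} as well, since it is the natural route to approximate functional equations over function fields. One minor remark: the detour through $\Re(s)>1$ is unnecessary, because $\mathcal{L}(z,\chi_u)$ is a polynomial from the start, so the identity is purely algebraic and valid for all $s$ directly.
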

\begin{lemma}[\cite{Bae2018}, Lemma 3.3]
Let $L\in\mathbb{A}^+$. Given any $\epsilon>0$, we have
\begin{equation}
    \sum_{\substack{f\in\mathbb{A}^+_n\\(f,L)=1}}\phi(f)=\frac{1}{\zeta_{\mathbb{A}}(2)}q^{2n}\prod_{P|L}(1+|P|^{-1})^{-1}+O(q^{(1+\epsilon)n}).
\end{equation}
\end{lemma}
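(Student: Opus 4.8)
The plan is to realise the left-hand side as the coefficient of $u^{n}$ in a generating function that factors as an Euler product, with $u=q^{-s}$. First I would form, for $\Re(s)>2$, the Dirichlet series
\[
D_L(s):=\sum_{\substack{f\in\mathbb{A}^+\\(f,L)=1}}\frac{\phi(f)}{|f|^{s}}=\prod_{P\nmid L}\left(1+\sum_{e\ge1}\frac{\phi(P^{e})}{|P|^{es}}\right),
\]
using multiplicativity of $\phi$. Since $\phi(P^{e})=|P|^{e}-|P|^{e-1}$, the local factor simplifies to $\frac{1-|P|^{-s}}{1-|P|^{1-s}}$, and reinstating the finitely many missing Euler factors through $\prod_{P}(1-|P|^{-s})^{-1}=\zeta_{\mathbb{A}}(s)$ gives
\[
D_L(s)=\frac{\zeta_{\mathbb{A}}(s-1)}{\zeta_{\mathbb{A}}(s)}\prod_{P\mid L}\frac{1-|P|^{1-s}}{1-|P|^{-s}}.
\]

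Next I would substitute $u=q^{-s}$, so that $|P|^{-s}=u^{\deg P}$, $|P|^{1-s}=(qu)^{\deg P}$ and $\zeta_{\mathbb{A}}(s)=(1-qu)^{-1}$, turning $D_L$ into the rational function
\[
F_L(u):=\sum_{n\ge0}\Bigg(\sum_{\substack{f\in\mathbb{A}^+_n\\(f,L)=1}}\phi(f)\Bigg)u^{n}=\frac{1-qu}{1-q^{2}u}\prod_{P\mid L}\frac{1-(qu)^{\deg P}}{1-u^{\deg P}}.
\]
The only pole of $F_L$ in the open unit disc is the simple pole at $u=q^{-2}$; the remaining poles all lie at roots of unity on $|u|=1$, with no cancellation there because $(q\zeta)^{\deg P}=q^{\deg P}\neq1$ for every root of unity $\zeta$ (recall $q\geq2$). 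Splitting $F_L(u)=\dfrac{c_L}{1-q^{2}u}+G_L(u)$ with $G_L$ holomorphic on $|u|<1$, I would compute
\[
c_L=\lim_{u\to q^{-2}}(1-q^{2}u)F_L(u)=(1-q^{-1})\prod_{P\mid L}\frac{1-|P|^{-1}}{1-|P|^{-2}}=\frac{1}{\zeta_{\mathbb{A}}(2)}\prod_{P\mid L}\bigl(1+|P|^{-1}\bigr)^{-1},
\]
so the pole at $u=q^{-2}$ contributes exactly $\frac{1}{\zeta_{\mathbb{A}}(2)}q^{2n}\prod_{P\mid L}(1+|P|^{-1})^{-1}$ to $[u^{n}]F_L(u)$, which is the claimed main term.

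It then remains to bound $[u^{n}]G_L(u)$. Setting $\omega(L)=\#\{P\in\mathcal{P}:P\mid L\}$, every pole of $G_L$ lies on $|u|=1$ and has order at most $\omega(L)$, so shifting the contour in $[u^{n}]G_L(u)=\frac{1}{2\pi i}\oint_{|u|=\rho}G_L(u)\,u^{-n-1}\,du$ to a radius $\rho<1$ and taking $\rho=q^{-\epsilon}$ yields $[u^{n}]G_L(u)=O_{L,\epsilon}(q^{\epsilon n})$, which is absorbed into the stated error $O(q^{(1+\epsilon)n})$. The step I expect to require the most care is keeping this error honest when $L$ is permitted to grow with $n$, as happens in the moment applications: then $n^{\omega(L)}$ is no longer $O(q^{\epsilon n})$, and instead one should estimate the unit-circle contribution directly, via $\sum_{\substack{f\in\mathbb{A}^+_n\\(f,L)=1}}\phi(f)=\sum_{d\mid r(L)}\mu(d)\sum_{\substack{f\in\mathbb{A}^+_n\\ d\mid f}}\phi(f)$ together with the exact identity $\sum_{f\in\mathbb{A}^+_m}\phi(f)=q^{2m}(1-q^{-1})$ for $m\geq1$, followed by a routine estimate of the resulting sum over divisors $d\mid r(L)$. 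That divisor-sum bookkeeping, not any analytic difficulty, is where the real work lies.
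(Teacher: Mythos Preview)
The paper does not give its own proof of this lemma; it is simply quoted from \cite{Bae2018} as a preliminary result, so there is no in-paper argument to compare against. Your generating-function derivation is correct and is the standard route: the identification $D_L(s)=\frac{\zeta_{\mathbb{A}}(s-1)}{\zeta_{\mathbb{A}}(s)}\prod_{P\mid L}\frac{1-|P|^{1-s}}{1-|P|^{-s}}$, the passage to the rational function $F_L(u)$ in $u=q^{-s}$, and the residue at $u=q^{-2}$ all check out and yield exactly the stated main term, with the remainder $G_L$ holomorphic on $|u|<1$ giving $O_{L,\epsilon}(q^{\epsilon n})$ for fixed $L$.

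One small caution about your closing remark: in the M\"obius decomposition
\[
\sum_{\substack{f\in\mathbb{A}^+_n\\(f,L)=1}}\phi(f)=\sum_{d\mid r(L)}\mu(d)\sum_{\substack{f\in\mathbb{A}^+_n\\ d\mid f}}\phi(f),
\]
the inner sum is $\sum_{g\in\mathbb{A}^+_{n-\deg d}}\phi(dg)$, and since $\phi$ is only multiplicative (not completely multiplicative) you cannot immediately invoke the closed form $\sum_{f\in\mathbb{A}^+_m}\phi(f)=q^{2m}(1-q^{-1})$. If you want an argument uniform in $L$, it is cleaner to first expand $\phi(f)=\sum_{e\mid f}\mu(e)\,|f/e|$ and then sieve out the coprimality condition, or to bound the unit-circle contribution of $F_L$ directly via partial fractions in $u$. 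For the lemma as stated (and as used in the paper, where $L$ is fixed while $n$ grows), your primary contour argument already suffices.
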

\begin{prop}[\cite{Bae2018}, Proposition 3.20]
Let $r\geq 0$ and $s\geq 1$ be integers. For any $f\in\mathbb{A}^+_d$ with $d\leq g=r+s-1$ which is not a perfect square, we have
\begin{equation*}
    \sum_{u\in\mathcal{I}_{g+1}}\chi_u(f)\ll g2^{\frac{d}{2}}q^g.
\end{equation*}
\end{prop}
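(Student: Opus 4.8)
We sketch the argument. The plan is to turn $\chi_u(f)$ into an additive character of the parameter $u$ and then exploit orthogonality; the assumption that $f$ is not a perfect square is precisely what makes these characters non-trivial. Using the normalisation of Section~2.3, write $\mathcal{I}_{g+1}$ as the disjoint union of the sets $\mathcal{I}_{(r,s)}=\{v+F:v\in\mathcal{F}_r,\ F\in\mathcal{G}_s\}$ over $r+s=g+1$, $s\ge1$ (with the convention $\mathcal{F}_0=\{0\}$). Since the symbol $[\,\cdot\,,f)$ is additive in its first argument, $\{(v+F)/f\}=\{v/f\}\{F/f\}$, so
\[
\sum_{u\in\mathcal{I}_{g+1}}\chi_u(f)=\sum_{\substack{r+s=g+1\\ s\ge1}}\Bigl(\sum_{v\in\mathcal{F}_r}\{v/f\}\Bigr)\Bigl(\sum_{F\in\mathcal{G}_s}\{F/f\}\Bigr),
\]
and it is enough to bound the two inner sums and to control how many pairs $(r,s)$ contribute.

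For the infinite-place sum, write $F=\alpha+\sum_{i=1}^{s}\alpha_iT^{2i-1}$ and use additivity once more. Expanding $[\,\cdot\,,f)$ over the prime powers dividing $f$ and using the Artin--Schreier criterion --- solvability of $X^{2}+X\equiv\cdot\pmod Q$ is governed by $\mathrm{Tr}_{(\mathbb{A}/Q)\to\mathbb{F}_{2}}$ --- one gets $\{\xi/f\}=(-1)^{\deg f}$ and $\{\alpha_iT^{2i-1}/f\}=(-1)^{\mathrm{Tr}_{\mathbb{F}_q\to\mathbb{F}_2}(\alpha_i c_i)}$, where $c_i$ is the $(2i-1)$-st power sum of the zeros of $f$. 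Summing the coefficients by orthogonality of additive characters of $\mathbb{F}_q$, the sum over $\mathcal{G}_s$ is $0$ if $\deg f$ is odd, and for $\deg f$ even it vanishes unless $c_1=\dots=c_{s-1}=0$; since in characteristic $2$ the vanishing of all odd power sums of the zeros of $f$ forces $f$ to be a perfect square once $\deg f$ is even, there is a least index $\ell^{\ast}\le d/2$ with $c_{\ell^{\ast}}\neq0$. Hence $\sum_{F\in\mathcal{G}_s}\{F/f\}=0$ unless $s\le\ell^{\ast}$, and otherwise $\bigl|\sum_{F\in\mathcal{G}_s}\{F/f\}\bigr|\le 2q^{s}$.

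For the ramified sum, for $v\in\mathcal{F}_r$ write $v=\sum_{P\mid M}\sum_{i=1}^{\ell_P}A_{P,i}P^{-(2i-1)}$ with $M\in\mathcal{B}_r$, $(M,f)=1$, $\deg A_{P,i}<\deg P$ and $A_{P,\ell_P}\neq0$; additivity together with orthogonality in the variables $A_{P,i}$ gives
\[
\sum_{v\in\mathcal{F}_r}\{v/f\}=\sum_{\substack{M\in\mathcal{B}_r\\(M,f)=1}}\ \prod_{P\mid M}\Bigl(\prod_{i<\ell_P}S_{P,i}\Bigr)\bigl(S_{P,\ell_P}-1\bigr),\qquad S_{P,i}:=\sum_{\deg A<\deg P}(-1)^{[A/P^{2i-1},f)},
\]
and each $S_{P,i}$ equals $|P|$ or $0$ according as the $\mathbb{F}_2$-linear form $A\mapsto[A/P^{2i-1},f)$ is trivial or not on $\{\deg A<\deg P\}$. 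The crux is that, via reciprocity and the non-degeneracy of the trace pairing, these forms are governed by the residues of $f$ modulo powers of $P$: since $f$ is not a perfect square the form is non-trivial for every $P$ of degree $>d$, and more generally the moduli $M$ on which the product does not vanish form an $f$-``smooth'' family whose cardinality is bounded combinatorially in terms of $f$. Feeding this into the estimate for $\sum_{(f,L)=1}\phi(f)$ recorded above should yield $\bigl|\sum_{v\in\mathcal{F}_r}\{v/f\}\bigr|\ll 2^{d/2}q^{r-1}$.

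Putting the three ingredients together, only $O(g)$ pairs $(r,s)$ with $r+s=g+1$ contribute, and each contributes $\ll 2^{d/2}q^{r-1}\cdot 2q^{s}\ll 2^{d/2}q^{g}$, so that $\sum_{u\in\mathcal{I}_{g+1}}\chi_u(f)\ll g\,2^{d/2}q^{g}$, as desired. The main obstacle is the ramified sum: one has to make precise the reciprocity/trace description of the forms $A\mapsto[A/P^{2i-1},f)$, show that the non-squareness of $f$ genuinely confines the surviving moduli $M$ and quantify this by the factor $2^{d/2}$ (here the square part of $f$ and the higher-ramification terms with $\ell_P\ge2$ need care), and then carry out the resulting $M$-sum; by contrast the infinite-place computation and the counting of pairs $(r,s)$ are comparatively routine.
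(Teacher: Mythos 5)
First, note that the paper does not prove this proposition at all: it is imported verbatim as Proposition 3.20 of Bae--Jung \cite{Bae2018}, so there is no in-paper argument to compare yours against; your sketch has to stand on its own as a reconstruction of the cited proof.

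Your architecture is the right one, and the infinite-place half is essentially complete and correct: decomposing $\mathcal{I}_{g+1}=\bigsqcup_{r+s=g+1,\,s\ge1}\mathcal{I}_{(r,s)}$, using additivity of $[\,\cdot\,,f)$ to factor the character, identifying $[\alpha_iT^{2i-1},f)$ with $\mathrm{Tr}_{\mathbb{F}_q/\mathbb{F}_2}(\alpha_ic_i)$ via Artin--Schreier, and observing that in characteristic $2$ the vanishing of all odd power sums forces $f'=0$ and hence $f=\square$ when $\deg f$ is even, so that only $s\le\ell^*\le d/2$ survive --- all of this checks out, and the bookkeeping $2^{d/2}q^{r-1}\cdot 2q^s\ll 2^{d/2}q^g$ summed over $O(g)$ pairs does reproduce the stated bound. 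The genuine gap is exactly where you say it is, and it is not a minor one: the estimate $\bigl|\sum_{v\in\mathcal{F}_r}\{v/f\}\bigr|\ll 2^{d/2}q^{r-1}$ is the entire content of the proposition. As it stands you have only shown that each local factor $S_{P,i}$ is $0$ or $|P|$, i.e.\ that the inner sum over the $A_{P,i}$ either vanishes or is as large as possible with no cancellation; the claim that non-squareness of $f$ makes the set of surviving moduli $M\in\mathcal{B}_r$ sparse enough to beat the trivial bound $\#\mathcal{F}_r\asymp q^{2r}$ down to $2^{d/2}q^{r-1}$ --- a full square-root saving --- is asserted (``should yield'') rather than proved. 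This requires the even-characteristic reciprocity law for the Hasse symbol (relating $\{(A/P^{2i-1})/f\}$ to a character of $f$ modulo $P$), an argument that the triviality of the form $A\mapsto[A/P^{2i-1},f)$ pins down $P$ (or the relevant residue of $f$) to a set whose count over $M\in\mathcal{B}_r$ is $O(2^{d/2}q^{r-1})$, and care with the higher-ramification indices $\ell_P\ge2$; none of these is carried out, and Lemma 4.2 by itself gives a count of size $q^{2r}$, not $q^{r-1}$. Until that step is supplied the proof is a correct skeleton with its load-bearing bone missing.
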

\subsection{Analogies between Classical L-functions and L-functions over Function Fields}
Let $u\in\mathcal{I}_{g+1}$. For a fixed $k$, we seek to obtain as asymptotic expression for 
\begin{equation}\label{eq:4.3}
    \sum_{u\in\mathcal{I}_{g+1}}L\left(\frac{1}{2},\chi_u\right)^k,
\end{equation}
as $g\rightarrow\infty$. To achieve this, we consider a more general expression obtained by introducing small shifts, say $\alpha_1,\dotsc,\alpha_k$
\begin{equation}
    \sum_{u\in\mathcal{I}_{g+1}}L\left(\frac{1}{2}+\alpha_1,\chi_u\right)\dotsc L\left(\frac{1}{2}+\alpha_k,\chi_u\right).
\end{equation}
By introducing such shifts, hidden structures are revealed in the form of symmetries and the calculations are simplified by the removal of higher order poles. In the end, we let each $\alpha_1,\dotsc,\alpha_k$ tend to zero to recover (\ref{eq:4.3}). \\
\par\noindent
The first step to obtain the conjecture for the integral moments is to use the ``Approximate" functional equation, Lemma 4.1. Here, we note that $\mathcal{X}_u(s)$ can be written as 
\begin{equation*}
    \mathcal{X}_u(s)=(q^{2g+1})^{\frac{1}{2}-s}X(s),
\end{equation*}
where $X(s)=q^{-\frac{1}{2}+s}$ corresponds to the gamma factor in the classical quadratic Dirichlet L-functions.
\begin{lemma}
We have that 
\begin{equation*}
    \mathcal{X}_u(s)^{\frac{1}{2}}=\mathcal{X}_u(1-s)^{-\frac{1}{2}}
\end{equation*}
and
\begin{equation*}
    \mathcal{X}_u(s)\mathcal{X}_u(1-s)=1.
\end{equation*}
\end{lemma}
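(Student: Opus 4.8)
The plan is to verify both identities directly from the explicit closed form $\mathcal{X}_u(s)=(q^{2g+1})^{\frac{1}{2}-s}X(s)$ with $X(s)=q^{-\frac{1}{2}+s}$, which gives $\mathcal{X}_u(s)=q^{(2g+1)(\frac{1}{2}-s)}q^{s-\frac{1}{2}}=q^{(2g+1-1)(\frac{1}{2}-s)}=q^{2g(\frac{1}{2}-s)}=q^{g(1-2s)}$, matching the form $\mathcal{X}_u(s)=q^{(1-2s)g}$ recorded in Lemma 4.1. So the whole statement reduces to elementary manipulation of the exponent $g(1-2s)$ as a function of $s$.

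First I would compute $\mathcal{X}_u(1-s)=q^{g(1-2(1-s))}=q^{g(2s-1)}=q^{-g(1-2s)}$. Hence $\mathcal{X}_u(s)\mathcal{X}_u(1-s)=q^{g(1-2s)}q^{-g(1-2s)}=q^{0}=1$, which is the second identity. For the first identity, taking the principal square root of $\mathcal{X}_u(s)=q^{g(1-2s)}$ gives $\mathcal{X}_u(s)^{1/2}=q^{\frac{g}{2}(1-2s)}$, while $\mathcal{X}_u(1-s)^{-1/2}=\left(q^{-g(1-2s)}\right)^{-1/2}=q^{\frac{g}{2}(1-2s)}$; the two agree. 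Alternatively, one notes the first identity is an immediate consequence of the second: raising $\mathcal{X}_u(s)\mathcal{X}_u(1-s)=1$ to the power $\tfrac12$ yields $\mathcal{X}_u(s)^{1/2}\mathcal{X}_u(1-s)^{1/2}=1$, i.e. $\mathcal{X}_u(s)^{1/2}=\mathcal{X}_u(1-s)^{-1/2}$. I would present it this second way since it is cleaner and makes the logical dependence transparent.

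There is essentially no obstacle here; the only point requiring a word of care is the choice of branch for the square root, since $\mathcal{X}_u(s)^{1/2}$ involves $q^{g(1-2s)/2}$ and one must use the same branch of $w\mapsto w^{1/2}$ (equivalently the same branch of the logarithm $\log q$) consistently on both sides. With the standard convention $q^{z}:=e^{z\log q}$ using the real logarithm of the positive integer $q$, the function $s\mapsto q^{g(1-2s)/2}$ is entire and single-valued, so the identity $\mathcal{X}_u(s)^{1/2}=\mathcal{X}_u(1-s)^{-1/2}$ holds as an identity of entire functions of $s$. I would state this convention explicitly and then the verification is the two-line computation above.
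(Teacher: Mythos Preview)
Your proof is correct and follows exactly the same approach as the paper, which simply states that the proof follows directly from the definition of $\mathcal{X}_u(s)$. Your write-up is in fact more detailed than the paper's one-line proof, including the explicit computation $\mathcal{X}_u(s)=q^{g(1-2s)}$ and a remark on the branch convention for the square root.
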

\begin{proof}
The proof follows directly from the definition of $\mathcal{X}_u(s)$. 
\end{proof}
\par\noindent
Consider the $Z_L$ function
\begin{equation}\label{eq:4.5}
    Z_L(s,\chi_u)=\mathcal{X}_u(s)^{-\frac{1}{2}}L(s,\chi_u).
\end{equation}
We will apply the recipe to this $Z_L$ function since it simplifies the calculations and it satisfies a symmetric functional equation given by the following Lemma.
\begin{lemma}
Let $Z_L(s,\chi_u)$ be the function defined by (\ref{eq:4.5}), then it satisfies the following functional equation
\begin{equation*}
    Z_L(s,\chi_u)=Z_L(1-s,\chi_u).
\end{equation*}
\end{lemma}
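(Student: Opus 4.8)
The plan is to verify the functional equation for $Z_L(s,\chi_u)$ directly from its definition \eqref{eq:4.5} together with the two identities recorded in the previous Lemma. First I would write
\begin{equation*}
    Z_L(1-s,\chi_u)=\mathcal{X}_u(1-s)^{-\frac{1}{2}}L(1-s,\chi_u),
\end{equation*}
and then invoke the functional equation for the L-function itself, namely $L(s,\chi_u)=\mathcal{X}_u(s)L(1-s,\chi_u)$, rewritten as $L(1-s,\chi_u)=\mathcal{X}_u(s)^{-1}L(s,\chi_u)$ (using $\mathcal{X}_u(s)\mathcal{X}_u(1-s)=1$ to invert). Substituting this in gives
\begin{equation*}
    Z_L(1-s,\chi_u)=\mathcal{X}_u(1-s)^{-\frac{1}{2}}\mathcal{X}_u(s)^{-1}L(s,\chi_u).
\end{equation*}

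Next I would simplify the prefactor $\mathcal{X}_u(1-s)^{-\frac{1}{2}}\mathcal{X}_u(s)^{-1}$. By the first identity of the previous Lemma, $\mathcal{X}_u(s)^{\frac{1}{2}}=\mathcal{X}_u(1-s)^{-\frac{1}{2}}$, hence $\mathcal{X}_u(1-s)^{-\frac{1}{2}}=\mathcal{X}_u(s)^{\frac{1}{2}}$, and therefore
\begin{equation*}
    \mathcal{X}_u(1-s)^{-\frac{1}{2}}\mathcal{X}_u(s)^{-1}=\mathcal{X}_u(s)^{\frac{1}{2}}\mathcal{X}_u(s)^{-1}=\mathcal{X}_u(s)^{-\frac{1}{2}}.
\end{equation*}
Plugging this back yields $Z_L(1-s,\chi_u)=\mathcal{X}_u(s)^{-\frac{1}{2}}L(s,\chi_u)=Z_L(s,\chi_u)$, which is exactly the claimed symmetry.

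There is essentially no obstacle here: the only mild subtlety is making sure the half-integer powers $\mathcal{X}_u(s)^{\pm 1/2}$ are handled consistently, but since $\mathcal{X}_u(s)=(q^{2g+1})^{1/2-s}q^{-1/2+s}=q^{g(1-2s)}$ is a single-valued nonvanishing function (an honest power of $q$, with no branch issues once a branch of $z\mapsto z^{1/2}$ compatible with the conventions is fixed), all manipulations above are legitimate. One could alternatively just substitute $\mathcal{X}_u(s)^{-1/2}=q^{g(s-1/2)}$ throughout and check the identity by direct exponent arithmetic, which makes the single-valuedness manifest; I would mention this as the cleanest route and note that it also reproves the previous Lemma as a by-product.
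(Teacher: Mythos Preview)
Your argument is correct and is precisely the approach the paper takes: the paper's proof simply states that the result follows directly from the definition of $Z_L(s,\chi_u)$ together with Lemma 4.4, and you have written out those steps explicitly. The only thing you invoke that the paper leaves implicit is the functional equation $L(s,\chi_u)=\mathcal{X}_u(s)L(1-s,\chi_u)$, which is stated just before in the paper (equation (3.2)) and is exactly what is needed.
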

\begin{proof}
The proof follows directly from the definition of $Z_L(s,\chi_u)$ and Lemma 4.4.
\end{proof}
\par\noindent
Thus, our goal is to obtain an asymptotic formula for the $k$-shifted moment 
\begin{equation*}
    L_u(s)=\sum_{u\in\mathcal{I}_{g+1}}Z(s;\alpha_1,\dotsc,\alpha_k),
\end{equation*}
where
\begin{equation*}
    Z(s;\alpha_1,\dotsc,\alpha_k)=\prod_{j=1}^kZ_L(s+\alpha_j,\chi_u). 
\end{equation*}
Using Lemma 4.1 and Lemma 4.4, we have that
\begin{equation} \label{eq:4.6}
    Z_L(s,\chi_u)=\mathcal{X}_u(s)^{-\frac{1}{2}}\sum_{f\in\mathbb{A}^+_{\leq g}}\frac{\chi_u(f)}{|f|^s}+\mathcal{X}_u(1-s)^{-\frac{1}{2}}\sum_{f\in\mathbb{A}^+_{\leq g-1}}\frac{\chi_u(f)}{|f|^{1-s}}.
\end{equation}
\subsection{Adapting the CFKRS recipe for the Even characteristic case}
We will present the recipe which follows from \cite{AndradeJungShamesaldeen2018,Andrade2014,Conrey2005} with the necessary modifications for the family $L(s,\chi_u)$.
\begin{enumerate}
    \item We start of with a product of $k$-shifted L-functions 
    \begin{equation*}
        Z(s;\alpha_1,\dotsc,\alpha_k)=Z_L(s+\alpha_1,\chi_u)\dotsc Z_L(s+\alpha_k,\chi_u).
    \end{equation*}
    \item Replace the L-function by its ``Approximate" functional equation (\ref{eq:4.6}). Hence we obtain
    \begin{equation}
        Z\left(\frac{1}{2};\alpha_1,\dotsc,\alpha_k\right)=\sum_{\epsilon_j=\pm 1}\prod_{j=1}^k\mathcal{X}_u\left(\frac{1}{2}+\epsilon_j\alpha_j\right)^{-\frac{1}{2}}\sum_{\substack{n_1,\dotsc,n_k\\n_j\text{ monic}\\\text{deg}(n_j)\leq f(\epsilon_j)}}\frac{\chi_u(n_1\dotsc n_k)}{\prod_{j=1}^k|n_j|^{\frac{1}{2}+\epsilon_j\alpha_j}},
    \end{equation}
    where $f(1)=g$ and $f(-1)=g-1$. 
    \item Average the sign of the functional equation. Note that in this case, the $\epsilon_f$-signs of the functional equation are all equal to 1 and therefore do not produce any effect on the final result. 
    \item Replace each summand by its expected value when averaged over $\mathcal{I}_{g+1}$. For this we have the following result.
    \begin{lemma}
    Let
    \begin{equation*}
        a_m=\prod_{P|m}\left(1+\frac{1}{|P|}\right)^{-1},
    \end{equation*}
    then 
    \[
    \lim_{g\rightarrow\infty}\frac{1}{\#\mathcal{I}_{g+1}}\sum_{u\in\mathcal{I}_{g+1}}\chi_u(m)=
    \begin{cases}
    a_m&\text{if }m\text{ is a square of a polynomial},\\
    0&\text{otherwise}.
    \end{cases}
    \]
    \end{lemma}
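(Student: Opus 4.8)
The plan is to compute the average $\frac{1}{\#\mathcal{I}_{g+1}}\sum_{u\in\mathcal{I}_{g+1}}\chi_u(m)$ by reducing to the case where $m$ is square-free, then invoking Proposition 4.3 for the non-square case and a direct computation for the square case. First I would write $m = m_1 m_2^2$ with $m_1$ square-free; since $\chi_u$ is completely multiplicative and $\chi_u(P)^2 \in \{0,1\}$, with $\chi_u(P)^2 = 1$ precisely when $P$ does not divide the denominator of $u$, we get $\chi_u(m) = \chi_u(m_1)\cdot \mathbf{1}[(m_2,\text{denom}(u))=1]$. Thus the sum over $u$ splits according to whether $m$ is a perfect square (i.e. $m_1 = 1$) or not.

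For the non-square case ($m_1 \neq 1$, so $m$ is not a perfect square), I would apply Proposition 4.3 with $f = m$ (or rather with $f = m_1$ after handling the $m_2^2$ factor): it gives $\sum_{u\in\mathcal{I}_{g+1}}\chi_u(m) \ll g\, 2^{d/2} q^g$ where $d = \deg(m)$ is fixed as $g \to \infty$. Since $\#\mathcal{I}_{g+1} = 2\zeta_{\mathbb{A}}(2)^{-1} q^{2(g+1)-1} = 2\zeta_{\mathbb{A}}(2)^{-1} q^{2g+1}$ by Lemma 2.11, the quotient is $O(g\, q^{g}/q^{2g+1}) = O(g\, q^{-g-1}) \to 0$. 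This handles the ``otherwise'' branch. (A small technical point: Proposition 4.3 is stated for $f$ with $\deg f \le g$, which is automatic here since $\deg m$ is fixed and $g \to \infty$, and for $f$ not a perfect square, which is exactly our hypothesis.)

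For the square case, write $m = \ell^2$. Then $\chi_u(m) = \chi_u(\ell)^2$, which equals $1$ if $(\ell, \text{denom}(u)) = 1$ and $0$ otherwise; equivalently $\chi_u(m) = \mathbf{1}[(r(\ell), \text{denom}(u)) = 1]$ where $r(\ell) = \prod_{P|\ell} P$. So $\sum_{u\in\mathcal{I}_{g+1}}\chi_u(m) = \#\{u \in \mathcal{I}_{g+1} : (\text{denom}(u), \ell) = 1\}$. The hard part will be counting this set with enough precision: I would use the explicit parametrization of $\mathcal{I}_{g+1}$ via $\mathcal{I}_{(r,s)}$ with $r+s = g+1$, $s \ge 1$, where the denominator structure comes from the $\mathcal{F}_r$-part (through $M \in \mathcal{B}_r$) and the $\mathcal{G}_s$-part contributes nothing to the denominator. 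Restricting to $M$ coprime to $\ell$ and summing the counts (using $\#\mathcal{B}_r$-type identities and the formula $\#\mathcal{F}_n = \zeta_{\mathbb{A}}(2)^{-1} q^{2n}$ refined to count only $M$ with $(M,\ell)=1$, which by Lemma 4.2-style reasoning introduces the factor $\prod_{P|\ell}(1+|P|^{-1})^{-1}$) should yield
\begin{equation*}
\frac{1}{\#\mathcal{I}_{g+1}}\sum_{u\in\mathcal{I}_{g+1}}\chi_u(\ell^2) = \prod_{P|\ell}\left(1+\frac{1}{|P|}\right)^{-1} + o(1) = a_m + o(1),
\end{equation*}
since $a_{\ell^2} = a_\ell$ depends only on the radical of $\ell$. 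Taking $g \to \infty$ gives the claimed limit. The main obstacle is organizing the count over $\mathcal{I}_{g+1}$ with the coprimality constraint cleanly; I expect it to follow the same lines as the proof of Lemma 2.11 and Lemma 4.2 in \cite{Bae2018}, with the coprimality to $\ell$ simply inserting an Euler-product correction, and all error terms being $o(\#\mathcal{I}_{g+1})$.
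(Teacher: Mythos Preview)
Your proposal is correct and follows essentially the same approach as the paper: for $m=\ell^2$ you reduce $\chi_u(\ell^2)$ to the coprimality indicator $(\ell,\text{denom}(u))=1$, decompose $\mathcal{I}_{g+1}$ over the $\mathcal{I}_{(r,s)}$ and then over the $\mathcal{I}_M$ with $M\in\mathcal{B}_r$, and count using Lemma~4.2 to produce the factor $\prod_{P\mid\ell}(1+|P|^{-1})^{-1}$; for $m$ not a square you invoke Proposition~4.3 directly. The only cosmetic difference is your preliminary factorisation $m=m_1m_2^2$, which the paper omits since it simply splits into the two cases $m$ square and $m$ non-square from the outset.
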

    \begin{proof}
    We start by considering the case when $m$ is a square. For $m=\square=\ell^2$ and by the definition of $\mathcal{I}$, we have that
    \begin{equation*}
        \frac{1}{\#\mathcal{I}_{g+1}}\sum_{u\in\mathcal{I}_{g+1}}\chi_u(m=\ell^2)=\frac{1}{\#\mathcal{I}_{g+1}}\sum_{r=0}^g\sum_{u\in\mathcal{I}_{(r,g+1-r)}}\chi_u(m=\ell^2).
    \end{equation*}
    Note that $\mathcal{I}_{(0,g+1)}=\mathcal{G}_{g+1}$. For $1\leq r\leq g$, we let
    \begin{equation*}
        \mathcal{I}_M=\{v+F:v\in\mathcal{F}_M,F\in\mathcal{G}_{g+1-r}\}.
    \end{equation*}
    Then $\mathcal{I}_{(r,g+1-r)}$ is a disjoint union of $\mathcal{I}_M$'s, where $M\in\mathcal{B}_r$. Hence we have 
    \begin{align*}
        &\frac{1}{\#\mathcal{I}_{g+1}}\sum_{r=0}^g\sum_{u\in\mathcal{I}_{(r,g+1-r)}}\chi_u(m=\ell^2)\\
        &=\frac{1}{\#\mathcal{I}_{g+1}}\sum_{F\in\mathcal{G}_{g+1}}\chi_F(m=\ell^2)+\frac{1}{\#\mathcal{I}_{g+1}}\sum_{r=1}^g\sum_{M\in\mathcal{B}_r}\sum_{u\in\mathcal{I}_M}\chi_u(m=\ell^2)\\
        &=\frac{1}{\#\mathcal{I}_{g+1}}\sum_{F\in\mathcal{G}_{g+1}}1+\frac{1}{\#\mathcal{I}_{g+1}}\sum_{r=1}^g\sum_{\substack{M\in\mathcal{B}_r\\(M,\ell)=1}}\sum_{u\in\mathcal{I}_M}1.
    \end{align*}
    Using Lemma 2.4 and using the fact that $\#\mathcal{G}_{g+1}=2\zeta_{\mathbb{A}}(2)^{-1}q^{g+1}$, we have
    \begin{equation*}
        \frac{1}{\#\mathcal{I}_{g+1}}\sum_{F\in\mathcal{G}_{g+1}}1=\frac{2\zeta_{\mathbb{A}}(2)^{-1}q^{g+1}}{2\zeta_{\mathbb{A}}(2)^{-1}q^{2g+1}}=q^{-g}\rightarrow 0\text{ as }g\rightarrow\infty.
    \end{equation*}
    Also, using the fact that $\#\mathcal{I}_M=2\zeta_{\mathbb{A}}(2)^{-1}q^{g+1-r}\phi(\tilde{M})$, where $M\in\mathcal{B}_r$, we have
    \begin{equation*}
        \frac{1}{\#\mathcal{I}_{g+1}}\sum_{r=1}^g\sum_{\substack{M\in\mathcal{B}_r\\(M,\ell)=1}}\sum_{u\in\mathcal{I}_M}1=q^{-g}\sum_{r=1}^gq^{-r}\sum_{\substack{\tilde{M}\in\mathbb{A}^+_r\\(\tilde{M},\ell)=1}}\phi(\tilde{M}).
    \end{equation*}
    Invoking Lemma 4.2, we have
    \begin{align*}
     q^{-g}\sum_{r=1}^gq^{-r}\sum_{\substack{\tilde{M}\in\mathbb{A}^+_r\\(\tilde{M},\ell)=1}}\phi(\tilde{M})&=\frac{q^{-g}}{\zeta_{\mathbb{A}}(2)}\prod_{P|\ell}\left(1+\frac{1}{|P|}\right)^{-1}\sum_{r=1}^gq^r+O\left(q^{-g}\sum_{r=1}^gq^{\epsilon r}\right)\\
     &=\frac{q^{-g}}{\zeta_{\mathbb{A}}(2)}\prod_{P|\ell}\left(1+\frac{1}{|P|}\right)^{-1}\frac{q}{q-1}(q^g-1)+O(q^{-g(1-\epsilon)}).
    \end{align*}
    As $g\rightarrow\infty$, the main term becomes $a_m$ and the second and error terms tend to zero. \\
    \par\noindent
    For $m$ not a perfect square, we have, by Lemma 4.3
    \begin{equation*}
        \frac{1}{\#\mathcal{I}_{g+1}}\sum_{u\in\mathcal{I}_{g+1}}\chi_u(m)\ll g2^{\frac{g}{2}}q^{-g}\rightarrow 0\text{ as }g\rightarrow\infty.
    \end{equation*}
    \end{proof}
    \par\noindent
    Using Lemma 4.6, we have that
    \begin{align*}
    \lim_{g\rightarrow\infty}\frac{1}{\#\mathcal{I}_{g+1}}\sum_{u\in\mathcal{I}_{g+1}}\sum_{\substack{n_1,\dotsc,n_k\\n_j\text{ monic}}}\frac{\chi_u(n_1\dotsc n_k)}{\prod_{j=1}^k|n_j|^{\frac{1}{2}+\epsilon_j\alpha_j}}&=\sum_{\substack{n_1,\dotsc,n_k\\n_j\text{ monic}\\n_1\dotsc n_k=m^2}}\frac{a_{m^2}}{\prod_{j=1}^k|n_j|^{\frac{1}{2}+\epsilon_j\alpha_j}}\\
    &=\sum_{m\text{ monic}}\sum_{\substack{n_1,\dotsc,n_k\\n_j\text{ monic}\\n_1\dotsc n_k=m^2}}\frac{a_{m^2}}{\prod_{j=1}^k|n_j|^{\frac{1}{2}+\epsilon_j\alpha_j}}.
    \end{align*}
    \item If we let 
    \begin{equation*}
        R_k\left(\frac{1}{2};\epsilon_1\alpha_1,\dotsc,\epsilon_k\alpha_k\right)=\sum_{m\text{ monic}}\sum_{\substack{n_1,\dotsc,n_k\\n_j\text{ monic}\\n_1\dotsc n_k=m^2}}\frac{a_{m^2}}{\prod_{j=1}^k|n_j|^{\frac{1}{2}+\epsilon_j\alpha_j}},
    \end{equation*}
    then the extended sum produced by the recipe is 
    \begin{equation*}
        M\left(\frac{1}{2};\alpha_1,\dotsc,\alpha_k\right)=\sum_{\epsilon_j=\pm 1}\prod_{j=1}^k\mathcal{X}_u\left(\frac{1}{2}+\epsilon_j\alpha_j\right)^{-\frac{1}{2}}R_k\left(\frac{1}{2};\epsilon_1\alpha_1,\dotsc,\epsilon_k\alpha_k\right).
    \end{equation*}
    \item 
    Then the conclusion is 
    \begin{equation*}
        \sum_{u\in\mathcal{I}_{g+1}}Z_L\left(\frac{1}{2};\alpha_1,\dotsc,\alpha_k\right)=\sum_{u\in\mathcal{I}_{g+1}}M\left(\frac{1}{2};\alpha_1,\dotsc,\alpha_k\right)(1+o(1)).
    \end{equation*}
\end{enumerate}
\subsection{Putting the Conjecture into a more useful form}
The conjecture is problematic in the form presented because of the individual terms have poles that cancel when summed. In this subsection, we put the conjecture into a more useful form, writing $R_k$ as an Euler product and then factoring out the appropriate $\zeta_{\mathbb{A}}(s)$-factors. \\
\par\noindent
First note that $a_m$ is multiplicative since
\begin{equation*}
    a_{mn}=a_ma_n\text{ whenever gcd}(m,n)=1,
\end{equation*}
where 
\begin{equation*}
    a_m=\prod_{P|m}\left(1+\frac{1}{|P|}\right)^{-1}.
\end{equation*}
Define
\begin{equation*}
    \psi(x):=\sum_{\substack{n_1,\dotsc,n_k\\n_j\text{ monic}\\n_1\dotsc n_k=x}}\frac{1}{|n_1|^{s+\alpha_1}\dotsc|n_k|^{s+\alpha_k}},
\end{equation*}
so that $\psi(m^2)$ is multiplicative on $m$. Therefore
\begin{align*}
    \sum_{m\text{ monic}}\sum_{\substack{n_1,\dotsc,n_k\\n_j\text{ monic}\\n_1\dotsc n_k=m^2}}\frac{a_{m^2}}{|n_1|^{s+\alpha_1}\dotsc|n_k|^{s+\alpha_k}}&=\sum_{m\text{ monic}}a_{m^2}\sum_{\substack{n_1,\dotsc,n_k\\n_j\text{ monic}\\n_1\dotsc n_k=m^2}}\frac{1}{|n_1|^{s+\alpha_1}\dotsc|n_k|^{s+\alpha_k}}\\
&=\sum_{m\text{ monic}}a_{m^2}\psi(m^2)\\
&=\prod_P\left(1+\sum_{j=1}^{\infty}a_{p^{2j}}\psi(P^{2j})\right),
\end{align*}
where 
\begin{equation}\label{eq:4.8}
        \psi(P^{2j})=\sum_{\substack{n_1,\dotsc,n_k\\n_j\text{ monic}\\n_1\dotsc n_k=P^{2j}}}\frac{1}{|n_1|^{s+\alpha_1}\dotsc|n_k|^{s+\alpha_k}}.
\end{equation}
Since we have $n_1\dotsc n_k=P^{2j}$, then for each $i=1,\dotsc,k$, write $n_i=P^{e_i}$ for some $e_i\geq 0$ and $e_1+\dotsc+e_k=2j$, (\ref{eq:4.8}) becomes
\begin{equation*}
    \psi(P^{2j})=\sum_{\substack{e_1,\dotsc,e_k\geq 0\\e_1+\dotsc+e_k=2j}}\prod_{i=1}^k\frac{1}{|P|^{e_i(s+\alpha_i)}}.
\end{equation*}
Therefore we have
\begin{align}\label{eq:4.9}
    R_k(s;\alpha_1,\dotsc,\alpha_k)&=\prod_P\left(1+\sum_{j=1}^{\infty}a_{P^{2j}}\psi(P^{2j})\right)\nonumber\\
    &=\prod_P\left(1+\sum_{j=1}^{\infty}a_{P^{2j}}\sum_{\substack{e_1,\dotsc,e_k\geq 0\\e_1+\dotsc+e_k=2j}}\prod_{i=1}^k\frac{1}{|P|^{e_i(s+\alpha_i)}}\right).
\end{align}
However, we know that 
\begin{equation*}
    a_{P^{2j}}=(1+|P|^{-1})^{-1},
\end{equation*}
thus (\ref{eq:4.9}) becomes 
\begin{align}\label{eq:4.10}
    R_k(s;\alpha_1,\dotsc,\alpha_k)&=\prod_P\left(1+(1+|P|^{-1})^{-1}\sum_{j=1}^{\infty}\sum_{\substack{e_1,\dotsc,e_k\geq 0\\e_1+\dotsc+
    e_k=2j}}\prod_{j=1}^k\frac{1}{|P|^{e_i(s+\alpha_i)}}\right)\nonumber\\
    &=\prod_PR_{k,P}.
\end{align}
Using the fact that 
\begin{equation*}
    (1+|P|^{-1})^{-1}=\sum_{\ell=0}^{\infty}\frac{(-1)^{\ell}}{|P|^{\ell}},
\end{equation*}
we have that 
\begin{equation*}
    R_{k,P}=1+\sum_{\ell=0}^{\infty}\sum_{j=1}^{\infty}\sum_{\substack{e_1,\dotsc,e_k\geq 0\\e_1+\dotsc+e_k=2j}}\prod_{i=1}^k\frac{(-1)^{\ell}}{|P|^{e_i(s+\alpha_i)+\ell}}.
\end{equation*}
When $\alpha_i=0$ and $s=\frac{1}{2}$, only terms with $e_1+\dotsc+e_k=2$ give rise to poles. Isolating the term with $\ell=0$ and $j=1$, we have
\begin{align*}
    R_{k,P}&=1+\sum_{e_1+\dotsc+e_k=2}\frac{1}{|P|^{e_i(s+\alpha_i)}}+(\text{lower order terms})\\
    &=1+\sum_{1\leq i\leq j\leq k}\frac{1}{|P|^{2s+\alpha_i+\alpha_j}}+(\text{lower order terms}).
\end{align*}
Hence we can write, for $\Re(\alpha_i)$ sufficiently small
\begin{equation*}
    R_{k,P}=1+\sum_{1\leq i\leq j\leq k}\frac{1}{|P|^{2s+\alpha_i+\alpha_j}}+O(|P|^{-1-2s+\epsilon})+O(|P|^{-3s+\epsilon}).
\end{equation*}
Expressing $R_{k,P}$ as a Euler product, we have 
\begin{equation*}
    R_{k,P}=\prod_{1\leq i\leq j\leq k}\left(1+\frac{1}{|P|^{2s+\alpha_i+\alpha_j}}\right)\left(1+O(|P|^{-1-2s+\epsilon})+O(|P|^{-3s+\epsilon})\right).
\end{equation*}
Since 
\begin{equation*}
    \prod_P\left(1+\frac{1}{|P|^{2s}}\right)=\frac{\zeta_{\mathbb{A}}(2s)}{\zeta_{\mathbb{A}}(4s)}
\end{equation*}
has a simple pole at $s=\frac{1}{2}$ and
\begin{equation*}
    \prod_P\left(1+O(|P|^{-1-2s+\epsilon})+O(|P|^{-3s+\epsilon}\right)
\end{equation*}
is analytic in $\Re(s)>\frac{1}{3}$, we see that $\prod_PR_{k,P}$ has a pole at $s=\frac{1}{2}$ of order $\frac{1}{2}k(k+1)$ if $\alpha_1=\dotsc=\alpha_k=0$. Thus we write 
\begin{equation*}
    R_k(s;\alpha_1,\dotsc,\alpha_k)=\prod_{1\leq i \leq j\leq k}\zeta_{\mathbb{A}}(2s+\alpha_i+\alpha_j)A(s;\alpha_1,\dotsc,\alpha_k),
\end{equation*}
where 
\begin{equation}\label{eq:4.11}
    A(s;\alpha_1,\dotsc,\alpha_k)=\prod_P\left(R_{k,P}(s;\alpha_1,\dotsc,\alpha_k)\prod_{1\leq i\leq j\leq k}\left(1-\frac{1}{|P|^{2s+\alpha_i+\alpha_j}}\right)\right).
\end{equation}
Here $A(s,\alpha_1,\dotsc,\alpha_k)$ defines an absolutely convergent Dirichlet series for $\Re(s)=\frac{1}{2}$ and for all $\alpha_j$'s positive. We can further write $A(s,\alpha_1,\dotsc,\alpha_k)$ by the following Lemma.
\begin{lemma}
Using the notation stated previously, we have
\begin{align}
    A\left(\frac{1}{2};z_1,\dotsc,z_k\right)&=\prod_P\prod_{1\leq i\leq j\leq k}\left(1-\frac{1}{|P|^{1+z_i+z_j}}\right)\nonumber\\
    &\times\left(\frac{1}{2}\left(\prod_{j=1}^k\left(1-\frac{1}{|P|^{\frac{1}{2}+z_j}}\right)^{-1}+\prod_{j=1}^k\left(1+\frac{1}{|P|^{\frac{1}{2}+z_j}}\right)^{-1}\right)+\frac{1}{|P|}\right)\left(1+\frac{1}{|P|}\right)^{-1}.
\end{align}
\end{lemma}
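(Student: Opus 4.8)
The plan is to start from the Euler product (\ref{eq:4.10})--(\ref{eq:4.11}) for $R_k(s;\alpha_1,\dots,\alpha_k)$, specialise to $s=\tfrac12$ and write $z_j$ in place of $\alpha_j$, and then evaluate each local factor $R_{k,P}$ in closed form. Recall that
\begin{equation*}
R_{k,P}=1+(1+|P|^{-1})^{-1}\sum_{j=1}^{\infty}\sum_{\substack{e_1,\dots,e_k\ge 0\\ e_1+\cdots+e_k=2j}}\prod_{i=1}^{k}\frac{1}{|P|^{e_i\left(\frac12+z_i\right)}}.
\end{equation*}
Putting $x_i=|P|^{-\left(\frac12+z_i\right)}$ (so $|x_i|<1$ as soon as $\Re(z_i)>-\tfrac12$, in particular in the region of the lemma), the inner double sum is precisely $\sum\prod_i x_i^{e_i}$ extended over all tuples $(e_1,\dots,e_k)$ whose sum $e_1+\cdots+e_k$ is a positive even integer.

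The first step is the standard parity device. Since $\sum_{e_1,\dots,e_k\ge0}\prod_i x_i^{e_i}=\prod_i(1-x_i)^{-1}$ converges absolutely, averaging this identity with the one obtained by replacing each $x_i$ by $-x_i$ annihilates the terms with $\sum_i e_i$ odd, giving
\begin{equation*}
\sum_{\substack{e_1,\dots,e_k\ge0\\ e_1+\cdots+e_k\ \mathrm{even}}}\prod_{i=1}^{k}x_i^{e_i}=\frac12\left(\prod_{i=1}^{k}\frac{1}{1-x_i}+\prod_{i=1}^{k}\frac{1}{1+x_i}\right).
\end{equation*}
Subtracting the single term with all $e_i=0$ (whose sum $0$ is even) removes the $j=0$ contribution, so the double sum appearing in $R_{k,P}$ equals $\frac12\bigl(\prod_i(1-x_i)^{-1}+\prod_i(1+x_i)^{-1}\bigr)-1$. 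Substituting this back and using the algebraic identity $1-(1+|P|^{-1})^{-1}=\frac{1}{|P|+1}=(1+|P|^{-1})^{-1}|P|^{-1}$ to combine the constant terms, one factors out $(1+|P|^{-1})^{-1}$ and obtains
\begin{equation*}
R_{k,P}=(1+|P|^{-1})^{-1}\left(\frac12\left(\prod_{j=1}^{k}\left(1-\frac{1}{|P|^{\frac12+z_j}}\right)^{-1}+\prod_{j=1}^{k}\left(1+\frac{1}{|P|^{\frac12+z_j}}\right)^{-1}\right)+\frac{1}{|P|}\right).
\end{equation*}
Inserting this expression into (\ref{eq:4.11}), namely $A\bigl(\tfrac12;z_1,\dots,z_k\bigr)=\prod_P R_{k,P}\prod_{1\le i\le j\le k}\bigl(1-|P|^{-1-z_i-z_j}\bigr)$, yields exactly the claimed formula.

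I do not expect a genuine obstacle here: the only points needing care are the justification of the interchange of summations and the parity average by absolute convergence (valid for $\Re(z_j)>-\tfrac12$, hence in the range of the lemma and wherever $A$ is already known to converge), and the correct handling of the ``$j\ge1$'', i.e. strictly positive even sum, constraint, which is dealt with by subtracting off the trivial term. The mild remaining bookkeeping point --- that after removing the factor $\prod_{i\le j}(1-|P|^{-1-z_i-z_j})$ the local factors are $1+O(|P|^{-1-2\Re(z)})$, so the product over $P$ converges absolutely --- has already been established in the discussion immediately preceding the lemma and may simply be cited.
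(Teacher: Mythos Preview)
Your proof is correct and follows essentially the same route as the paper's: evaluate the local factor $R_{k,P}$ by the parity-averaging identity for $\sum_{\sum e_i\ \mathrm{even}}\prod x_i^{e_i}$, handle the $j=0$ term, and then insert the result into (\ref{eq:4.11}). The only cosmetic difference is bookkeeping---the paper factors out $(1+|P|^{-1})^{-1}$ before applying the parity trick, whereas you apply the trick first and then recombine the constants via $1-(1+|P|^{-1})^{-1}=(1+|P|^{-1})^{-1}|P|^{-1}$, which is the same algebra in a different order.
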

\begin{proof}
From (\ref{eq:4.11}), we have that
\begin{equation*}
    A\left(\frac{1}{2};z_1,\dotsc,z_k\right)=\prod_P\left(R_{k,P}\left(\frac{1}{2},z_1,\dotsc,z_k\right)\prod_{1\leq i\leq j\leq k}\left(1-\frac{1}{|P|^{1+z_i+z_j}}\right)\right),
\end{equation*}
where $R_{k,P}$ is defined in (\ref{eq:4.10}). Further, we have
\begin{equation*}
    R_{k,P}=\left(1+|P|^{-1}\right)^{-1}\left(\left(1+|P|^{-1}\right)+\sum_{j=1}^{\infty}\sum_{\substack{e_1,\dotsc,e_k\geq 0\\e_1+\dotsc+e_k=2j}}\prod_{i=1}^k\frac{1}{|P|^{e_i\left(\frac{1}{2}+z_i\right)}}\right).
\end{equation*}
Thus
\begin{align*}
    1+\sum_{j=1}^{\infty}\sum_{\substack{e_1,\dotsc,e_k\geq 0\\e_1+\dotsc+e_k=2j}}\prod_{i=1}^k\frac{1}{|P|^{e_i\left(\frac{1}{2}+z_i\right)}}&=\frac{1}{2}\sum_{j=0}^{\infty}\sum_{\substack{e_1,\dotsc,e_k\geq 0\\e_1+\dotsc+e_k=2j}}2\prod_{i=1}^k\frac{1}{|P|^{e_i\left(\frac{1}{2}+z_i\right)}}\\
    &=\frac{1}{2}\left(\prod_{i=1}^k\sum_{e_i=0}^{\infty}\left(\frac{1}{|P|^{\frac{1}{2}+z_i}}\right)^{e_i}+\prod_{i=1}^k\sum_{e_i=0}^{\infty}(-1)^{e_1+\dotsc+e_k}\left(\frac{1}{|P|^{\frac{1}{2}+z_i}}\right)^{e_i}\right)\\
    &=\frac{1}{2}\left(\prod_{i=1}^k\left(1-\frac{1}{|P|^{\frac{1}{2}+z_i}}\right)^{-1}+\prod_{i=1}^k\left(1+\frac{1}{|P|^{\frac{1}{2}+z_i}}\right)^{-1}\right).
\end{align*}
\end{proof}
\par\noindent
We have 
\begin{align*}
    M\left(\frac{1}{2};\alpha_1,\dotsc,\alpha_k\right)&=\sum_{\epsilon_j=\pm 1}\prod_{j=1}^k\mathcal{X}_u\left(\frac{1}{2}+\epsilon_j\alpha_j\right)^{-\frac{1}{2}}R_k\left(\frac{1}{2};\epsilon_1\alpha_1,\dotsc,\epsilon_k\alpha_k\right)\\
    &=\sum_{\epsilon_j=\pm 1}\prod_{j=1}^k\mathcal{X}_u\left(\frac{1}{2}+\epsilon_j\alpha_j\right)^{-\frac{1}{2}}\prod_{1\leq i\leq j\leq k}\zeta_{\mathbb{A}}(1+\epsilon_i\alpha_i+\epsilon_j\alpha_j)A\left(\frac{1}{2};\epsilon_1\alpha_1,\dotsc,\epsilon_k\alpha_k\right).
\end{align*}
Therefore the conjecture takes the form
\begin{align*}
    &\sum_{u\in\mathcal{I}_{g+1}}Z_L\left(\frac{1}{2},\alpha_1,\dotsc,\alpha_k\right)\\&=\sum_{u\in\mathcal{I}_{g+1}}\sum_{\epsilon_j=\pm 1}\prod_{j=1}^k\mathcal{X}_u\left(\frac{1}{2}+\epsilon_j\alpha_j\right)^{-\frac{1}{2}}A\left(\frac{1}{2};\epsilon_1\alpha_1,\dotsc,\epsilon_k\alpha_k\right)\prod_{1\leq i\leq j\leq k}\zeta_{\mathbb{A}}(1+\epsilon_i\alpha_i+\epsilon_j\alpha_j)(1+o(1)).
\end{align*}
Using the fact that
\begin{equation*}
    \mathcal{X}_u(s)=q^{g(1-2s)}X(s),
\end{equation*}
where $X(s)=q^{-\frac{1}{2}+s}$, then 
\begin{equation*}
    \mathcal{X}_u\left(\frac{1}{2}+\epsilon_j\alpha_j\right)^{-\frac{1}{2}}=(q^{2g+1})^{\frac{\epsilon_j\alpha_j}{2}}X\left(\frac{1}{2}+\epsilon_j\alpha_j\right)^{-\frac{1}{2}}.
\end{equation*}
Thus we arrive at the following conjecture:
\begin{align}\label{eq:4.13}
    &\sum_{u\in\mathcal{I}_{g+1}}Z_L\left(\frac{1}{2}+\alpha_1,\chi_u\right)\dotsc Z_L\left(\frac{1}{2}+\alpha_k,\chi_u\right)\nonumber\\
    &=\sum_{\epsilon_j=\pm 1}\prod_{j=1}^kX\left(\frac{1}{2}+\epsilon_j\alpha_j\right)^{-\frac{1}{2}}\sum_{u\in\mathcal{I}_{g+1}}(q^{2g+1})^{\frac{1}{2}\sum_{j=1}^k\epsilon_j\alpha_j}R_k\left(\frac{1}{2};\epsilon_1\alpha_1,\dotsc,\epsilon_k\alpha_k\right)(1+o(1)).
\end{align}
\subsection{The Contour Integral Representation of the Conjecture}
The following Lemma is due to Conrey, Farmer, Keating, Rubinstein and Snaith \cite{Conrey2005}. 
\begin{lemma}
Suppose $F$ is a symmetric function in $k$ variables, regular near $(0,\dotsc,0)$ and that $f(s)$ has a simple pole of residue 1 at $s=0$ and is otherwise analytic in a neighbourhood of $s=0$ amd let
\begin{equation*}
    K(\alpha_1,\dotsc,\alpha_k)=F(\alpha_1,\dotsc,\alpha_k)\prod_{1\leq i\leq j\leq k}f(\alpha_i+\alpha_j),
\end{equation*}
or
\begin{equation*}
    K(\alpha_1,\dotsc,\alpha_k)=F(\alpha_1,\dotsc,\alpha_k)\prod_{1\leq i<j\leq k}f(\alpha_i+\alpha_j).
\end{equation*}
If $\alpha_i+\alpha_j$ are contained in the region of analyticity of $f(s)$, then 
\begin{align*}
    \sum_{\epsilon_j=\pm 1}K(\epsilon_1\alpha_1,\dotsc,\epsilon_k\alpha_k)&=\frac{(-1)^{\frac{k(k-1)}{2}}2^k}{k!}\frac{1}{(2\pi i)^k}\oint\dotsc\oint K(z_1,\dotsc,z_k)\\
    &\times\frac{\Delta(z_1^2,\dotsc,z_k^2)^2\prod_{j=1}^kz_j}{\prod_{i=1}^k\prod_{j=1}^k(z_i-\alpha_j)(z_i+\alpha_j)}dz_1\dotsc dz_k
\end{align*}
and 
\begin{align*}
    \sum_{\epsilon_j=\pm 1}\left(\prod_{j=1}^k\epsilon_j\right)K(\epsilon_1\alpha_1,\dotsc,\epsilon_k\alpha_k)&=\frac{(-1)^{\frac{k(k-1)}{2}}2^k}{k!}\frac{1}{(2\pi i)^k}\oint\dotsc\oint K(z_1,\dotsc,z_k)\\
    &\times\frac{\Delta(z_1^2,\dotsc,z_k^2)^2\prod_{j=1}^k\alpha_j}{\prod_{i=1}^k\prod_{j=1}^k(z_i-\alpha_j)(z_i+\alpha_j)}dz_1\dotsc dz_k,
\end{align*}
where the path of integration encloses the $\pm\alpha_j$'s.
\end{lemma}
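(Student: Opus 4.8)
The plan is to evaluate the $k$-fold contour integral on the right-hand side by residues, integrating in the variables $z_1,\dots,z_k$ one at a time, and then to match the resulting sum of residues against the left-hand side.

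First I would record the two structural facts that drive the computation. Since $F$ is symmetric and both $\prod_{1\le i\le j\le k}f(\alpha_i+\alpha_j)$ and $\prod_{1\le i<j\le k}f(\alpha_i+\alpha_j)$ are symmetric in $\alpha_1,\dots,\alpha_k$, the function $K$ is symmetric. Secondly, in the region bounded by the contours the integrand is holomorphic apart from simple poles along $z_i=\pm\alpha_j$: the only other potential singularities arise from the factors $f(z_i+z_j)$ (and, for the $i\le j$ product, $f(2z_i)$), but a pole of $f$ at $z_i+z_j=0$ with $i\ne j$ is cancelled by the double zero of $\Delta(z_1^2,\dots,z_k^2)^2$ along $z_i=-z_j$, and in the first displayed identity the numerator factor $\prod_j z_j$ removes the pole of $f(2z_i)$ at $z_i=0$ (for the second identity one simply takes the contours to avoid $z_i=0$).

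Next I would carry out the residue calculation. Writing $I$ for $\tfrac{1}{(2\pi i)^k}\oint\cdots\oint(\cdots)\,dz_1\cdots dz_k$ and expanding the iterated integral, $I$ is a sum, over all maps $\phi\colon\{1,\dots,k\}\to\{1,\dots,k\}$ and all $\eta\in\{\pm1\}^k$, of the residue obtained by successively setting $z_i=\eta_i\alpha_{\phi(i)}$. If $\phi$ fails to be injective, say $\phi(i)=\phi(j)$ with $i\ne j$, then in the corresponding residue $z_i^2=z_j^2$, so $\Delta(z_1^2,\dots,z_k^2)^2$ vanishes and the term is zero; hence only $\phi\in S_k$ contribute. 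For $\phi\in S_k$ the denominator $\prod_{i,j}(z_i-\alpha_j)(z_i+\alpha_j)=\prod_i\prod_j(z_i^2-\alpha_j^2)$ factors over $i$, so the residue equals the holomorphic part $K\cdot\Delta(z_1^2,\dots,z_k^2)^2\cdot\prod_j z_j$ (respectively $\cdots\prod_j\alpha_j$), evaluated at $z_i=\eta_i\alpha_{\phi(i)}$, times $\prod_i\Bigl(\tfrac{\eta_i}{2\alpha_{\phi(i)}}\prod_{m\ne\phi(i)}\tfrac{1}{\alpha_{\phi(i)}^2-\alpha_m^2}\Bigr)$. Using $(\eta_i\alpha_{\phi(i)})^2=\alpha_{\phi(i)}^2$, the bijectivity of $\phi$, the identity $\prod_{i\ne j}(\alpha_i^2-\alpha_j^2)=(-1)^{k(k-1)/2}\prod_{i<j}(\alpha_j^2-\alpha_i^2)^2$, and the symmetry of $K$, all of the Vandermonde and the $\prod_j\alpha_j$ cancel and the residue collapses to $\tfrac{(-1)^{k(k-1)/2}}{2^k}K(\epsilon_1\alpha_1,\dots,\epsilon_k\alpha_k)$ for the first identity and $\tfrac{(-1)^{k(k-1)/2}}{2^k}\bigl(\prod_j\epsilon_j\bigr)K(\epsilon_1\alpha_1,\dots,\epsilon_k\alpha_k)$ for the second, where $\epsilon_m=\eta_{\phi^{-1}(m)}$.

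Finally I would sum over $(\phi,\eta)\in S_k\times\{\pm1\}^k$: for each fixed $\phi$, as $\eta$ ranges over $\{\pm1\}^k$ so does $\epsilon$, so every $\phi$ contributes the full sum $\sum_\epsilon K(\epsilon_1\alpha_1,\dots,\epsilon_k\alpha_k)$ (respectively its signed analogue), and the $k!$ permutations produce $I=\tfrac{(-1)^{k(k-1)/2}}{2^k}\,k!\sum_\epsilon K(\epsilon_1\alpha_1,\dots,\epsilon_k\alpha_k)$, which is exactly the asserted formula after multiplying through (using $(-1)^{-k(k-1)/2}=(-1)^{k(k-1)/2}$). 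I expect the main obstacle to be purely the bookkeeping of the last two steps: verifying that no spurious residues (at $z_i=\pm z_j$ or $z_i=0$) enter, and tracking the sign $(-1)^{k(k-1)/2}$ and the power of $2$ so that the Vandermonde, the factor $\prod_j z_j$ or $\prod_j\alpha_j$, and the $2^{-k}$ coming from the simple poles combine precisely as in the statement.
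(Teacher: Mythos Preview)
Your proof is correct and is precisely the standard residue argument. However, the paper does not actually prove this lemma: it is simply quoted from Conrey, Farmer, Keating, Rubinstein and Snaith \cite{Conrey2005} (the text introduces it as ``The following Lemma is due to Conrey, Farmer, Keating, Rubinstein and Snaith'') and then used as a black box to pass to the contour-integral form of the conjecture. So there is nothing in the paper to compare against.

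Your argument matches the original CFKRS proof in outline: evaluate the iterated integral by residues at $z_i=\eta_i\alpha_{\phi(i)}$, kill the non-bijective $\phi$ via the Vandermonde, and use symmetry of $K$ together with the identity $\prod_{m\ne n}(\alpha_n^2-\alpha_m^2)=(-1)^{k(k-1)/2}\Delta(\alpha_1^2,\dots,\alpha_k^2)^2$ to collapse each surviving term to $\tfrac{(-1)^{k(k-1)/2}}{2^k}K(\epsilon_1\alpha_1,\dots,\epsilon_k\alpha_k)$ (respectively its signed version). Your treatment of the spurious poles is also right: $\Delta(z_1^2,\dots,z_k^2)^2$ has a double zero along $z_i^2=z_j^2$ which absorbs the simple pole of $f(z_i+z_j)$, and in the $i\le j$ case the factor $\prod_j z_j$ kills the pole of $f(2z_i)$ at $z_i=0$. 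The one small caveat is the pairing of the two forms of $K$ with the two identities: in practice the identity with $\prod_j z_j$ in the numerator is used with the $i\le j$ product (symplectic case) and the one with $\prod_j\alpha_j$ with the $i<j$ product (orthogonal case), which is why the $z_i=0$ issue does not arise in the second identity; your parenthetical about deforming the contour away from $0$ is a harmless workaround but slightly obscures this.
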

\par \noindent
We will use Lemma 4.8 to write Conjecture (\ref{eq:4.13}) as a contour integral. First note that
\begin{equation*}
    \sum_{u\in\mathcal{I}_{g+1}}\prod_{j=1}^kZ_L\left(\frac{1}{2}+\alpha_j,\chi_u\right)=\sum_{u\in\mathcal{I}_{g+1}}\prod_{j=1}^k\mathcal{X}_u\left(\frac{1}{2}+\alpha_j\right)^{-\frac{1}{2}}L\left(\frac{1}{2}+\alpha_j,\chi_u\right)
\end{equation*}
and we can factor out the $\mathcal{X}_u\left(\frac{1}{2}+\alpha_j\right)^{-\frac{1}{2}}$'s leaving
\begin{align*}
    &\sum_{u\in\mathcal{I}_{g+1}}\prod_{j=1}^kL\left(\frac{1}{2}+\alpha_j,\chi_u\right)\\
    &=\sum_{u\in\mathcal{I}_{g+1}}\prod_{j=1}^k\mathcal{X}_u\left(\frac{1}{2}+\alpha_j\right)^{\frac{1}{2}}\sum_{\epsilon_j=\pm 1}\mathcal{X}_u\left(\frac{1}{2}+\epsilon_j\alpha_j\right)^{-\frac{1}{2}}A\left(\frac{1}{2};\epsilon_1\alpha_1,\dotsc,\epsilon_k\alpha_k\right)\\
    &\times\prod_{1\leq i\leq j\leq k}\zeta_{\mathbb{A}}(1+\epsilon_i\alpha_i+\epsilon_j\alpha_j)(1+o(1)).
\end{align*}
Using the definition of $\mathcal{X}_u(s)$, we have
\begin{align*}
    &\sum_{u\in\mathcal{I}_{g+1}}\prod_{j=1}^kL\left(\frac{1}{2}+\alpha_j,\chi_u\right)\\
    &=\sum_{u\in\mathcal{I}_{g+1}}\prod_{j=1}^k(q^{2g+1})^{-\frac{1}{2}\sum_{j=1}^k\alpha_j}X\left(\frac{1}{2}+\alpha_j\right)^{\frac{1}{2}}\sum_{\epsilon_j=\pm 1}(q^{2g+1})^{\frac{1}{2}\sum_{j=1}^k\epsilon_j\alpha_j}X\left(\frac{1}{2}+\epsilon_j\alpha_j\right)^{-\frac{1}{2}}\\
    &\times A\left(\frac{1}{2};\epsilon_1\alpha_1,\dotsc,\epsilon_k\alpha_k\right)\prod_{1\leq i\leq j\leq k}\zeta_{\mathbb{A}}(1+\epsilon_i\alpha_i+\epsilon_j\alpha_j)(1+o(1)).
\end{align*}
Taking out a factor of $\log q $ from each term in the second product, we have
\begin{align}\label{eq:4.14}
    &\sum_{u\in\mathcal{I}_{g+1}}L\left(\frac{1}{2}+\alpha_1,\chi_u\right)\dotsc L\left(\frac{1}{2}+\alpha_k,\chi_u\right)\nonumber\\
    &=\sum_{u\in\mathcal{I}_{g+1}}\frac{\prod_{j=1}^k(q^{2g+1})^{-\frac{1}{2}\sum_{j=1}^k\alpha_j}X\left(\frac{1}{2}+\alpha_j\right)^{\frac{1}{2}}}{(\log q)^{\frac{k(k+1)}{2}}}\sum_{\epsilon_j=\pm 1}(q^{2g+1})^{\frac{1}{2}\sum_{j=1}^k\epsilon_j\alpha_j}X\left(\frac{1}{2}+\epsilon_j\alpha_j\right)^{-\frac{1}{2}}\nonumber\\
    &\times A\left(\frac{1}{2};\epsilon_1\alpha_1,\dotsc,\epsilon_k\alpha_k\right)\prod_{1\leq i\leq j\leq k}\zeta_{\mathbb{A}}(1+\epsilon_i\alpha_i+\epsilon_j\alpha_j)(\log q)(1+o(1)).
\end{align}
If we call
\begin{equation*}
    F(\alpha_1,\dotsc,\alpha_k)=\prod_{j=1}^k(q^{2g+1})^{\frac{1}{2}\sum_{j=1}^k\alpha_j}X\left(\frac{1}{2}+\alpha_j\right)^{-\frac{1}{2}}A\left(\frac{1}{2};\alpha_1,\dotsc,\alpha_k\right)
\end{equation*}
and
\begin{equation}\label{eq:4.15}
    f(s)=\zeta_{\mathbb{A}}(1+s)(\log q)\text{ and so }f(\alpha_i+\alpha_j)=\zeta_{\mathbb{A}}(1+\alpha_i+\alpha_j)(\log q),
\end{equation}
then $f(s)$ has a simple pole at $s=0$ with residue 1. Denoting
\begin{equation}\label{eq:4.16}
    K(\alpha_1,\dotsc,\alpha_k)=F(\alpha_1,\dotsc,\alpha_k)\prod_{1\leq i\leq j\leq k}f(\alpha_i+\alpha_j),
\end{equation}
then (\ref{eq:4.14}) is equal to 
\begin{equation}\label{eq:4.17}
    \left(\sum_{u\in\mathcal{I}_{g+1}}\frac{\prod_{j=1}^k(q^{2g+1})^{-\frac{1}{2}\sum_{j=1}^k\alpha_j}X\left(\frac{1}{2}+\alpha_j\right)^{\frac{1}{2}}}{(\log q)^{\frac{k(k+1)}{2}}}\sum_{\epsilon_j=\pm 1}K(\epsilon_1\alpha_1,\dotsc,\epsilon_k\alpha_k)\right)(1+o(1)).
\end{equation}
Using Lemma 4.8, (\ref{eq:4.17}) is equal to
\begin{align}\label{eq:4.18}
    &\sum_{u\in\mathcal{I}_{g+1}}\frac{(q^{2g+1})^{-\frac{1}{2}\sum_{j=1}^k\alpha_j}X\left(\frac{1}{2}+\alpha_j\right)^{\frac{1}{2}}}{(\log q)^{\frac{k(k+1)}{2}}}\frac{(-1)^{\frac{k(k-1)}{2}}2^k}{k!}\oint\dotsc\oint K(z_1,\dotsc,z_k)\nonumber\\
    &\times\frac{\Delta(z_1^2,\dotsc,z_k^2)^2\prod_{j=1}^kz_j}{\prod_{i=1}^k\prod_{j=1}^k(z_i+\alpha_j)(z_i+\alpha_j)}dz_1\dotsc dz_k+o(q^{2g+1}).
\end{align}
Using (\ref{eq:4.16}) and (\ref{eq:4.15}), we have that (\ref{eq:4.18}) is equal to
\begin{align*}
    &\sum_{u\in\mathcal{I}_{g+1}}(q^{2g+1})^{-\frac{1}{2}\sum_{j=1}^k\alpha_j}X\left(\frac{1}{2}+\alpha_j\right)^{\frac{1}{2}}\frac{(-1)^{\frac{k(k-1)}{2}}2^k}{k!}\frac{1}{(2\pi i)^k}\oint\dotsc\oint F(z_1,\dotsc,z_k)\\
    &\times \prod_{1\leq i\leq j\leq k}\zeta_{\mathbb{A}}(1+z_i+z_j)\frac{\Delta(z_1^2,\dotsc,z_k^2)^2\prod_{j=1}^kz_j}{\prod_{i=1}^k\prod_{j=1}^k(z_i-\alpha_j)(z_i+\alpha_j)}dz_1\dotsc dz_k+o(q^{2g+1}).
\end{align*}
Let
\begin{equation*}
    G(z_1,\dotsc,z_k)=\prod_{j=1}^kX\left(\frac{1}{2}+z_j\right)^{-\frac{1}{2}}A\left(\frac{1}{2};z_1,\dotsc,z_k\right)\prod_{1\leq i\leq j\leq k}\zeta_{\mathbb{A}}(1+z_i+z_j),
\end{equation*}
then we have that
\begin{align*}
   & \sum_{u\in\mathcal{I}_{g+1}}L\left(\frac{1}{2}+\alpha_1,\chi_u\right)\dotsc L\left(\frac{1}{2}+\alpha_k,\chi_u\right)\\
   &=\sum_{u\in\mathcal{I}_{g+1}}\prod_{j=1}^k(q^{2g+1})^{-\frac{1}{2}\sum_{j=1}^k\alpha_j}X\left(\frac{1}{2}+\alpha_j\right)^{\frac{1}{2}}\frac{(-1)^{\frac{k(k-1)}{2}}2^k}{k!}\frac{1}{(2\pi i)^k}\oint\dotsc\oint G(z_1,\dotsc,z_k)\\
   &\times (q^{2g+1})^{\frac{1}{2}\sum_{j=1}^kz_j}\frac{\Delta(z_1^2,\dotsc,z_k^2)^2\prod_{j=1}^kz_j}{\prod_{i=1}^k\prod_{j=1}^k(z_i-\alpha_j)(z_i+\alpha_j)}dz_1\dotsc dz_k+o(q^{2g+1}).
\end{align*}
If we let
\begin{align*}
    Q_k(x)=\frac{(-1)^{\frac{k(k-1)}{2}}2^k}{k!}\frac{1}{(2\pi i)^k}\oint\dotsc\oint G(z_1,\dotsc,z_k)q^{\frac{x}{2}\sum_{j=1}^kz_j}\frac{\Delta(z_1^2,\dotsc,z_k^2)^2\prod_{j=1}^kz_j}{\prod_{i=1}^k\prod_{j=1}^k(z_i-\alpha_j)(z_i+\alpha_j)}dz_1\dotsc dz_k,
\end{align*}
then setting $\alpha_i=0$, we obtain the formulae stated in Conjecture 3.1.
\section{Some Conjectural Formulae for Moments of L-functions in Even characteristic}
In this section, we use Conjecture 3.1 to obtain explicit conjectural formulae for the first few moments of quadratic Dirichlet L-functions in even characteristic. 
\subsection{First Moment}
We will use Conjecture 3.1 to determine the asymptotic formula for the first moment of our family of Dirichlet L-functions and compare it with (\ref{eq:2.13}). For the first moment, Conjecture 3.1 states that
\begin{equation*}
    \sum_{u\in\mathcal{I}_{g+1}}L\left(\frac{1}{2},\chi_u\right)=\sum_{u\in\mathcal{I}_{g+1}}Q_1(2g+1)(1+o(1)),
\end{equation*}
where $Q_1(x)$ is a polynomial of degree 1. From Conjecture 3.1, we have
\begin{equation}\label{eq:5.1}
    Q_1(x)=\frac{1}{\pi i}\oint\frac{G(z_1)\Delta(z_1^2)^2q^{\frac{x}{2}z_1}}{z_1}dz_1,
\end{equation}
where
\begin{equation*}
    G(z_1)=A\left(\frac{1}{2};z_1\right)X\left(\frac{1}{2}+z_1\right)^{-\frac{1}{2}}\zeta_{\mathbb{A}}(1+2z_1).
\end{equation*}
From the definition of the Vandermonde determinant and the definition of $X(s)$, we have that
\begin{equation*}
    \Delta(z_1^2)^2=1\text{ and }X\left(\frac{1}{2}+z_j\right)^{-\frac{1}{2}}=q^{-\frac{z_1}{2}}.
\end{equation*}
Therefore (\ref{eq:5.1}) becomes
\begin{equation}\label{eq:5.2}
    Q_1(x)=\frac{1}{\pi i}\oint\frac{A\left(\frac{1}{2};z_1\right)\zeta_{\mathbb{A}}(1+2z_1)q^{\frac{x}{2}z_1}q^{-\frac{z_1}{2}}}{z_1}dz_1.
\end{equation}
From Lemma 4.7, we have
\begin{equation*}
    A\left(\frac{1}{2};z_1\right)=\prod_P\left(1-\frac{1}{|P|^{1+2z_1}}\right)\left(\frac{1}{2}\left(\left(1-\frac{1}{|P|^{\frac{1}{2}+z_1}}\right)^{-1}+\left(1+\frac{1}{|P|^{\frac{1}{2}+z_1}}\right)^{-1}\right)+\frac{1}{|P|}\right)\left(1+\frac{1}{|P|}\right)^{-1}.
\end{equation*}
We want to compute the integral (\ref{eq:5.2}) where the contour is a small circle around the origin. For this we need to locate the poles of the integrand. Let
\begin{equation}\label{eq:5.3}
    f(z_1)=\frac{A\left(\frac{1}{2};z_1\right)\zeta_{\mathbb{A}}(1+2z_1)q^{\frac{x}{2}z_1}q^{-\frac{z_1}{2}}}{z_1},
\end{equation}
then $f(z_1)$ has a double pole at $z_1=0$. To compute the residue, we expand $f(z_1)$ as a Laurent series and pick up the coefficients of $z_1^{-1}$. Expanding the numerator of (\ref{eq:5.3}) around $z_1=0$, we have
\begin{equation*}
    A\left(\frac{1}{2};z_1\right)=A\left(\frac{1}{2};0\right)+A'\left(\frac{1}{2};0\right)z_1+\frac{1}{2}A''\left(\frac{1}{2};0\right)z_1^2+\dotsc,
\end{equation*}
\begin{equation*}
    \zeta_{\mathbb{A}}(1+2z_1)=\frac{1}{2\log q}\frac{1}{z_1}+\frac{1}{2}+\frac{1}{6}(\log q)z_1-\frac{1}{90}(\log q)^3z_1^3+\dotsc,
\end{equation*}
\begin{equation*}
    q^{\frac{x}{2}z_1}=1+\frac{1}{2}(\log q)xz_1+\frac{1}{8}(\log q)^2x^2z_1^2+\dotsc
\end{equation*}
and
\begin{equation*}
    q^{-\frac{z_1}{2}}=1-\frac{1}{2}(\log q)z_1+\frac{1}{8}(\log q)^2z_1^2+\dotsc. 
\end{equation*}
Thus we have
\begin{align*}
    f(z_1)&=\left(A\left(\frac{1}{2};0\right)+A'\left(\frac{1}{2};0\right)z_1+\frac{1}{2}A''\left(\frac{1}{2};0\right)z_1^2+\dotsc\right)\left(1-\frac{1}{2}(\log q)z_1+\frac{1}{8}(\log q)^2z_1^2+\dotsc\right)\\
    &\times \left(\frac{1}{2\log q}\frac{1}{z_1}+\frac{1}{2}+\frac{1}{6}(\log q)z_1-\frac{1}{90}(\log q)^3z_1^3+\dotsc\right)\left(1+\frac{1}{2}(\log q)xz_1+\frac{1}{8}(\log q)^2x^2z_1^2+\dotsc\right).
\end{align*}
Collecting the terms corresponding to $z_1^{-1}$, we see that
\begin{equation*}
    \text{Res}(f(z_1),0)=\frac{1}{4}A\left(\frac{1}{2};0\right)+\frac{1}{4}A\left(\frac{1}{2};0\right)x+\frac{1}{2\log q}A'\left(\frac{1}{2};0\right).
\end{equation*}
Using the fact that 
\begin{equation*}
    A\left(\frac{1}{2};0\right)=P(1)\text{ and }A'\left(\frac{1}{2};0\right)=2P'(1),
\end{equation*}
we have that 
\begin{equation*}
    \frac{1}{\pi i}\oint \frac{A\left(\frac{1}{2};z_1\right)\zeta_{\mathbb{A}}(1+2z_1)q^{\frac{x}{2}z_1}q^{-\frac{z_1}{2}}}{z_1}dz_1=\frac{1}{2}P(1)\left(x+1+\frac{4}{\log q}\frac{P'}{P}(1)\right). 
\end{equation*}
We therefore have
\begin{align*}
  \sum_{u\in\mathcal{I}_{g+1}}L\left(\frac{1}{2},\chi_u\right)&=\sum_{u\in\mathcal{I}_{g+1}}Q_1(2g+1)(1+o(1)))\\
  &=\sum_{u\in\mathcal{I}_{g+1}}P(1)\left(g+1+\frac{2}{\log q}\frac{P'}{P}(1)\right)(1+o(1))\\
  &=P(1)\left(g+1+\frac{2}{\log q}\frac{P'}{P}(1)\right)\sum_{u\in\mathcal{I}_{g+1}}1+o(q^{2g+1}).
\end{align*}
Using Lemma 2.4, we conclude that, for the first moment, Conjecture 3.1 predicts 
\begin{equation}\label{eq:5.4}
    \sum_{u\in\mathcal{I}_{g+1}}L\left(\frac{1}{2},\chi_u\right)=2\frac{P(1)}{\zeta_{\mathbb{A}}(2)}q^{2g+1}\left[g+1+\frac{2}{\log q}\frac{P'}{P}(1)\right]+o(q^{2g+1}).
\end{equation}
If we compare formulas (\ref{eq:5.4}) and (\ref{eq:2.13}), we note that the main term and principal lower order terms are the same. Hence Theorem 2.10 proves our conjecture with an error $O(g2^{\frac{g}{2}}q^{\frac{3g}{2}})$ when $k=1$. 
\subsection{Second Moment}
For the second moment, the conjecture predicts that
\begin{equation*}
    \sum_{u\in\mathcal{I}_{g+1}}L\left(\frac{1}{2},\chi_u\right)^2=Q_2(2g+1)(1+o(1)),
\end{equation*}
where 
\begin{equation*}
    Q_2(x)=-\frac{2}{(2\pi i)^2}\oint\oint\frac{G(z_1,z_2)\Delta(z_1^2,z_2^2)^2q^{\frac{x}{2}(z_1+z_2)}}{z_1^3z_2^3}dz_1dz_2.
\end{equation*}
Using MATHEMATICA, we have that
\begin{align*}
       Q_2(x)&=\frac{1}{24\log^3(q)}\Bigg[\left(x^3+6x^2+11x+6\right)A\left(\frac{1}{2};0,0\right)\log^3(q)+\left(3x^2+12x+11\right)\log^2(q)\\&\times\left(A_1\left(\frac{1}{2};0,0\right)+A_2\left(\frac{1}{2};0,0\right)\right)+12(2+x)A_{12}\left(\frac{1}{2};0,0\right)\\&-2\left(A_{222}\left(\frac{1}{2};0,0\right)-3A_{122}\left(\frac{1}{2};0,0\right)-3A_{112}\left(\frac{1}{2};0,0\right)+A_{111}\left(\frac{1}{2};0,0\right)\right)\Bigg],
\end{align*}
where $A_j$ denotes the partial derivative, evaluated at zero of the function $A\left(\frac{1}{2};z_1.\dotsc,z_k\right)$ with respect to the $j^{\text{th}}$ variable. Hence the leading order asymptotic for the second moment for this family of L-functions can be written conjectually as
\begin{equation*}
    \sum_{u\in\mathcal{I}_{g+1}}L\left(\frac{1}{2},\chi_u\right)^2\sim \frac{2}{3}\frac{q^{2g+1}}{\zeta_{\mathbb{A}}(2)}g^3A\left(\frac{1}{2};0,0\right),
\end{equation*}
when $g\rightarrow\infty$, where 
\begin{equation*}
    A\left(\frac{1}{2};0,0\right)=\prod_P\left(1-\frac{4|P|^2-3|P|+1}{|P|^3(|P|+1)}\right).
\end{equation*}
\subsection{Third Moment}
For the third moment, the conjecture predicts that
\begin{equation*}
    \sum_{u\in\mathcal{I}_{g+1}}L\left(\frac{1}{2},\chi_u\right)^3=\sum_{u\in\mathcal{I}_{g+1}}Q_3(2g+1)(1+o(1)),
\end{equation*}
where
\begin{equation*}
    Q_3(x)=-\frac{4}{3}\frac{1}{(2\pi i)^3}\oint\oint\oint\frac{G(z_1,z_2,z_3)\Delta(z_1^2,z_2^2,z_3^2)^2q^{\frac{x}{2}(z_1+z_2+z_3)}}{z_1^5z_2^5z_3^5}dz_1dz_2dz_3.
\end{equation*}
Using MATHEMATICA, we have that
 \begin{align*}
        Q_3(x)&=\frac{1}{8640\log^6(q)}\Bigg[3(3+x)^2(x^4+12x^3+49x^2+78x+40)A\left(\frac{1}{2};0,0,0\right)\log^6(q)\\
        &+4(3x^5+45x^4+260x^3+720x^2+949x+471)\log^5(q)\\
        &\times\left(A_1\left(\frac{1}{2};0,0,0\right)+A_2\left(\frac{1}{2};0,0,0\right)+A_3\left(\frac{1}{2};0,0,0\right)\right)+4(949+1440x+780x^2+180x^3+15x^4)\\
        &\times\log^4(q)\left(A_{23}\left(\frac{1}{2};0,0,0\right)+A_{13}\left(\frac{1}{2};0,0,0\right)+A_{12}\left(\frac{1}{2};0,0,0\right)\right)+10(24+26x+9x^2+x^3)\\
        &\times\log^3(q)\Bigg(2A_{333}\left(\frac{1}{2};0,0,0\right)-3A_{233}\left(\frac{1}{2};0,0,0\right)-3A_{223}\left(\frac{1}{2};0,0,0\right)+2A_{222}\left(\frac{1}{2};0,0,0\right)\\
        &-3A_{133}\left(\frac{1}{2};0,0,0\right)-36A_{123}\left(\frac{1}{2};0,0,0\right)-3A_{122}\left(\frac{1}{2};0,0,0\right)-3A_{113}\left(\frac{1}{2};0,0,0\right)-3A_{112}\left(\frac{1}{2};0,0,0\right)\\
        &+2A_{111}\left(\frac{1}{2};0,0,0\right)\Bigg)-20(26+18x+3x^2)\log^2(q)\Bigg(A_{2333}\left(\frac{1}{2};0,0,0\right)+A_{2223}\left(\frac{1}{2};0,0,0\right)\\
        &+A_{1333}\left(\frac{1}{2};0,0,0\right)-6A_{1233}\left(\frac{1}{2};0,0,0\right)-6A_{1223}\left(\frac{1}{2};0,0,0\right)+A_{1222}\left(\frac{1}{2};0,0,0\right)-6A_{1123}\left(\frac{1}{2};0,0,0\right)\\
        &+A_{1113}\left(\frac{1}{2};0,0,0\right)+A_{1112}\left(\frac{1}{2};0,0,0\right)\Bigg)+6(3+x)\log(q)\Bigg(2A_{33333}\left(\frac{1}{2};0,0,0\right)-5A_{23333}\left(\frac{1}{2};0,0,0\right)\\
        &-10A_{22333}\left(\frac{1}{2};0,0,0\right)-10A_{22233}\left(\frac{1}{2};0,0,0\right)-5A_{22223}\left(\frac{1}{2};0,0,0\right)+2A_{22222}\left(\frac{1}{2};0,0,0\right)\\
        &+5A_{13333}\left(\frac{1}{2};0,0,0\right)+60A_{12233}\left(\frac{1}{2};0,0,0\right)+5A_{12222}\left(\frac{1}{2};0,0,0\right)-10A_{11333}\left(\frac{1}{2};0,0,0\right)\\
        &+60A_{11233}\left(\frac{1}{2};0,0,0\right)+60A_{11223}\left(\frac{1}{2};0,0,0\right)-10A_{11222}\left(\frac{1}{2};0,0,0\right)-10A_{11133}\left(\frac{1}{2};0,0,0\right)\\
        &-10A_{11122}\left(\frac{1}{2};0,0,0\right)-5A_{11113}\left(\frac{1}{2};0,0,0\right)-A_{11112}\left(\frac{1}{2};0,0,0\right)+2A_{11111}\left(\frac{1}{2};0,0,0\right)\Bigg)\\
        &+4\Bigg(3A_{233333}\left(\frac{1}{2};0,0,0\right)-20A_{222333}\left(\frac{1}{2};0,0,0\right)+3A_{222223}\left(\frac{1}{2};0,0,0\right)+3A_{133333}\left(\frac{1}{2};0,0,0\right)\\
        &-30A_{123333}\left(\frac{1}{2};0,0,0\right)+30A_{122333}\left(\frac{1}{2};0,0,0\right)+30A_{122233}\left(\frac{1}{2};0,0,0\right)-30A_{122223}\left(\frac{1}{2};0,0,0\right)\\
        &+3A_{122222}\left(\frac{1}{2};0,0,0\right)+30A_{112333}\left(\frac{1}{2};0,0,0\right)+30A_{112223}\left(\frac{1}{2};0,0,0\right)-20A_{111333}\left(\frac{1}{2};0,0,0\right)\\
        &+30A_{111233}\left(\frac{1}{2};0,0,0\right)+30A_{111223}\left(\frac{1}{2};0,0,0\right)-20A_{111222}\left(\frac{1}{2};0,0,0\right)-30A_{111123}\left(\frac{1}{2};0,0,0\right)\\
        &+3A_{111113}\left(\frac{1}{2};0,0,0\right)+3A_{111112}\left(\frac{1}{2};0,0,0\right)\Bigg)\Bigg].
    \end{align*}
    Hence the leading order asymptotic for the third moment for this family of L-functions can be written conjectually as 
    \begin{equation*}
        \sum_{u\in\mathcal{I}_{g+1}}L\left(\frac{1}{2},\chi_u\right)^3\sim \frac{2}{45}\frac{q^{2g+1}}{\zeta_{\mathbb{A}}(2)}g^6A\left(\frac{1}{2};0,0,0\right),
    \end{equation*}
    when $g\rightarrow\infty$, where 
    \begin{equation*}
        A\left(\frac{1}{2};0,0,0\right)=\prod_P\left(1-\frac{12|P|^5-23|P|^4+23|P|^3-15|P|^2+6|P|-1}{|P|^6(|P|+1)}\right).
    \end{equation*}
    \section{A Conjecture for the leading order asymptotics for the moments of $L(s,\chi_u)$ for general $k$}
    In this section, we show how to obtain an explicit conjecture for the leading order asymptotic of the moments for a general integer $k$. We also use the conjecture to calculate the leading order of the asymptotic for the fourth and fifth moment. 
    \subsection{Leading order for general $k$}
    To obtain the main formula we need the following Lemma. 
    \begin{lemma}[\cite{Andrade2014}, Lemma 5]
    Let $F$ be a symmetric function in $k$ variables, regular near $(0,\dotsc,0)$ and $f(s)$ has a simple pole of residue $1$ at $s=0$ and analytic in a neighbourhood of $s=0$. Let
    \begin{equation*}
        K(q^{2g+1};w_1,\dotsc,w_k)=\sum_{\epsilon_j=\pm 1}e^{\frac{1}{2}\log(q^{2g+1})\sum_{j=1}^k\epsilon_jw_j}F(\epsilon_1w_1,\dotsc,\epsilon_kw_k)\prod_{1\leq i\leq j\leq k}f(\epsilon_iw_i+\epsilon_jw_j),
    \end{equation*}
    and define $I(q^{2g+1}, k,w=0)$ to be the value of $K$ when $w_1=\dotsc=w_k=0$. Then
    \begin{equation*}
        I(q^{2g+1},k,w=0)\sim\left(\frac{1}{2}\log\left(q^{2g+1}\right)\right)^{\frac{k(k+1)}{2}}F(0,\dotsc,0)2^{\frac{k(k+1)}{2}}\prod_{j=1}^k\frac{j!}{(2j)!}. 
    \end{equation*}
    \end{lemma}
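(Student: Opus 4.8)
The plan is to convert $I(q^{2g+1},k,w=0)$ into a $k$-fold contour integral by means of Lemma 4.8, and then to extract its leading behaviour as $g\to\infty$ — equivalently as $L:=\log(q^{2g+1})\to\infty$ — by rescaling the variables. First I would absorb the exponential into the symmetric part: set $\widetilde F(w_1,\dotsc,w_k)=e^{\frac{1}{2}L\sum_j w_j}F(w_1,\dotsc,w_k)$, which is again symmetric and regular near the origin, so that the summand of $K$ is $\widetilde F(\epsilon_1w_1,\dotsc,\epsilon_kw_k)\prod_{1\le i\le j\le k}f(\epsilon_iw_i+\epsilon_jw_j)$. Applying the first identity of Lemma 4.8 with $F$ replaced by $\widetilde F$ and $\alpha_j=w_j$ produces a contour integral which is analytic in the $w_j$ near the origin, so I may then let $w_1=\dotsc=w_k\to 0$; the factor $\prod_{i=1}^k\prod_{j=1}^k(z_i-w_j)(z_i+w_j)$ collapses to $\prod_{j=1}^kz_j^{2k}$ and one obtains
\begin{equation*}
I(q^{2g+1},k,w=0)=\frac{(-1)^{\frac{k(k-1)}{2}}2^k}{k!}\frac{1}{(2\pi i)^k}\oint\dotsc\oint e^{\frac{L}{2}\sum_{j=1}^k z_j}F(z_1,\dotsc,z_k)\prod_{1\le i\le j\le k}f(z_i+z_j)\,\frac{\Delta(z_1^2,\dotsc,z_k^2)^2\prod_{j=1}^k z_j}{\prod_{j=1}^k z_j^{2k}}\,dz_1\dotsc dz_k,
\end{equation*}
the contours being small circles around $0$, chosen of slightly different radii so that $z_i+z_j\ne 0$ on them and the only poles enclosed are at $z_j=0$.

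Next I would rescale $z_j=2v_j/L$, taking the $z_j$-contour to be the circle of radius $2r_j/L$ for fixed distinct small $r_j$, so the $v_j$-contours become fixed circles of radius $r_j$. On these contours every $z_i+z_j$ tends to $0$ uniformly, and since $f$ has a simple pole of residue $1$ at $0$ one has $f(z_i+z_j)=\frac{1}{z_i+z_j}(1+O(L^{-1}))=\frac{L}{2(v_i+v_j)}(1+O(L^{-1}))$ uniformly on the contours; likewise $F(z_1,\dotsc,z_k)=F(0,\dotsc,0)(1+O(L^{-1}))$, while $e^{\frac{L}{2}\sum z_j}=e^{\sum v_j}$. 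Counting degrees of homogeneity — $\Delta(z_1^2,\dotsc,z_k^2)^2$ has degree $2k(k-1)$, $\prod z_j$ degree $k$, $\prod z_j^{2k}$ degree $2k^2$, and $dz_1\dotsc dz_k$ contributes $(2/L)^k$ — together with the $\binom{k+1}{2}=\frac{1}{2}k(k+1)$ factors $f(z_i+z_j)$, all powers of $L$ combine to $L^{\frac{1}{2}k(k+1)}$, and the surviving constant gives
\begin{equation*}
I(q^{2g+1},k,w=0)\sim L^{\frac{k(k+1)}{2}}F(0,\dotsc,0)\,\frac{(-1)^{\frac{k(k-1)}{2}}}{k!\,2^{\frac{k(k-1)}{2}}}\,\frac{1}{(2\pi i)^k}\oint\dotsc\oint e^{\sum_{j=1}^k v_j}\frac{\Delta(v_1^2,\dotsc,v_k^2)^2\prod_{j=1}^k v_j}{\prod_{j=1}^k v_j^{2k}\prod_{1\le i\le j\le k}(v_i+v_j)}\,dv_1\dotsc dv_k,
\end{equation*}
the $v$-integral being over fixed small circles about $0$. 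Since $L^{\frac{k(k+1)}{2}}=\bigl(\frac12\log(q^{2g+1})\bigr)^{\frac{k(k+1)}{2}}2^{\frac{k(k+1)}{2}}$, comparing this with the claimed asymptotic reduces the lemma to the closed-form identity
\begin{equation*}
\frac{1}{(2\pi i)^k}\oint\dotsc\oint e^{\sum_{j=1}^k v_j}\frac{\Delta(v_1^2,\dotsc,v_k^2)^2\prod_{j=1}^k v_j}{\prod_{j=1}^k v_j^{2k}\prod_{1\le i\le j\le k}(v_i+v_j)}\,dv_1\dotsc dv_k=(-1)^{\frac{k(k-1)}{2}}\,k!\,2^{\frac{k(k-1)}{2}}\prod_{j=1}^k\frac{j!}{(2j)!}.
\end{equation*}

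To settle this identity I would expand $\Delta(v_1^2,\dotsc,v_k^2)^2$ by the Cauchy--Binet (Laplace) expansion as a signed double sum over permutations, expand each $(v_i+v_j)^{-1}$ geometrically about one variable, and read off the coefficient of $\prod_j v_j^{2k-1}$ supplied by $e^{\sum v_j}=\prod_j e^{v_j}$; the resulting finite combinatorial sum collapses to $\prod_{j=1}^k j!/(2j)!$, which is the known $k$-th moment constant of the characteristic polynomial of Haar-random matrices in $\mathrm{USp}(2N)$. This is exactly the symplectic residue computation of Conrey, Farmer, Keating, Rubinstein and Snaith \cite{Conrey2005}, carried over to the function field setting in \cite{Andrade2014}, so in practice one quotes it; equivalently the $v$-integral above is literally the random-matrix integral whose value is already recorded there. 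I expect this last step — identifying the combinatorial sum with the symplectic constant — to be the only substantive point: everything preceding it is bookkeeping of homogeneity degrees together with the uniform estimate $f(s)=s^{-1}+O(1)$ near $s=0$, the one technical care being to keep the $O(L^{-1})$ errors uniform, which is why the $v_j$-contours are held at fixed radius while $z_j=2v_j/L\to 0$.
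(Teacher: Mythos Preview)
The paper does not actually prove this lemma: it is quoted verbatim from \cite{Andrade2014} (their Lemma~5), and the surrounding text simply applies it. So there is no ``paper's own proof'' to compare against.

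Your proposal is the standard derivation and is correct. The reduction via Lemma~4.8 to a $k$-fold contour integral, followed by the rescaling $z_j=2v_j/L$, is exactly how this asymptotic is obtained in \cite{Conrey2005} and \cite{Andrade2014}; your homogeneity count is right (the net power of $2/L$ from $\Delta(z^2)^2\prod z_j\,dz/\prod z_j^{2k}$ is zero, leaving $(L/2)^{k(k+1)/2}$ from the $f$-factors, which combines with the prefactor $2^k$ to give your $L^{k(k+1)/2}/(k!\,2^{k(k-1)/2})$). The one point you flag as substantive --- the closed-form evaluation of the residual $v$-integral as $(-1)^{k(k-1)/2}k!\,2^{k(k-1)/2}\prod_{j=1}^k j!/(2j)!$ --- is precisely the symplectic residue identity recorded in \cite{Conrey2005}, so quoting it is in line with both that paper and \cite{Andrade2014}. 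One small remark: you should make explicit that the contour integral is analytic in the $w_j$ near $0$ (which justifies the interchange of limit and integral when you set $w_j=0$); this is immediate since for fixed small contours the integrand depends on the $w_j$ only through the rational factor $\prod(z_i^2-w_j^2)^{-1}$, but it is the reason $I(q^{2g+1},k,w=0)$ is well defined in the first place despite each $\epsilon$-summand being singular there.
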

    \par\noindent
    Recall from (\ref{eq:4.17}), we have that
    \begin{align*}
        &\sum_{u\in\mathcal{I}_{g+1}}L\left(\frac{1}{2}+\alpha_1,\chi_u\right)\dotsc L\left(\frac{1}{2}+\alpha_k,\chi_u\right)\\
        &=\left(\sum_{u\in\mathcal{I}_{g+1}}\frac{\prod_{j=1}^k(q^{2g+1})^{-\frac{1}{2}\sum_{j=1}^k\alpha_j}X\left(\frac{1}{2}+\alpha_j\right)^{\frac{1}{2}}}{(\log q)^{\frac{k(k+1)}{2}}}\sum_{\epsilon_j=\pm 1}K(\epsilon_1\alpha_1,\dotsc,\epsilon_k\alpha_k)\right)(1+o(1)),
    \end{align*}
    where
    \begin{align*}
        K(\epsilon_1\alpha_1,\dotsc,\epsilon_k\alpha_k)&=\prod_{j=1}^k(q^{2g+1})^{\frac{1}{2}\sum_{j=1}^k\epsilon_j\alpha_j}X\left(\frac{1}{2}+\epsilon_j\alpha_j\right)^{-\frac{1}{2}}\\
        &\times A\left(\frac{1}{2};\epsilon_1\alpha_1,\dotsc,\epsilon_k\alpha_k\right)\prod_{1\leq i\leq j\leq k}\zeta_{\mathbb{A}}(1+\epsilon_i\alpha_i+\epsilon_j\alpha_j)(\log q).
    \end{align*}
    Applying Lemma 6.1 with
    \begin{equation*}
        f(s)=\zeta_{\mathbb{A}}(1+s)(\log q),
    \end{equation*}
\begin{equation*}
    F(\alpha_1,\dotsc,\alpha_k)=\prod_{j=1}^kX\left(\frac{1}{2}+\alpha_j\right)^{-\frac{1}{2}}A\left(\frac{1}{2};\alpha_1,\dotsc,\alpha_k\right)
\end{equation*}
and
\begin{equation*}
    K(q^{2g+1};\alpha_1,\dotsc,\alpha_k)=\sum_{\epsilon_j=\pm 1}(q^{2g+1})^{\frac{1}{2}\sum_{j=1}^k\epsilon_j\alpha_j}F(\epsilon_1\alpha_1,\dotsc,\epsilon_k\alpha_k)\prod_{1\leq i\leq j\leq k}f(\epsilon_i\alpha_i+\epsilon_j\alpha_j),
\end{equation*}
then letting $\alpha_1,\dotsc,\alpha_k\rightarrow 0$ we obtain the following result.
\begin{thm}
Conditional on Conjecture 3.1, we have that, as $g\rightarrow \infty$, the following holds
\begin{equation*}
    \sum_{u\in\mathcal{I}_{g+1}}L\left(\frac{1}{2},\chi_u\right)^k\sim 2^{\frac{k(k+1)}{2}+1}\frac{q^{2g+1}}{\zeta_{\mathbb{A}}(2)}g^{\frac{k(k+1)}{2}}A\left(\frac{1}{2};0,\dotsc,0\right)\prod_{j=1}^k\frac{j!}{(2j)!}. 
\end{equation*}
\end{thm}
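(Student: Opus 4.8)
The plan is to recognise the shifted-moment form of Conjecture 3.1 as an instance of the quantity treated in Lemma 6.1 and then let the shifts tend to zero. Recall from (\ref{eq:4.17}) that
\begin{align*}
&\sum_{u\in\mathcal{I}_{g+1}}L\left(\tfrac12+\alpha_1,\chi_u\right)\cdots L\left(\tfrac12+\alpha_k,\chi_u\right)\\
&\quad=\left(\sum_{u\in\mathcal{I}_{g+1}}\frac{(q^{2g+1})^{-\frac12\sum_{j}\alpha_j}\prod_{j=1}^kX\left(\tfrac12+\alpha_j\right)^{\frac12}}{(\log q)^{\frac{k(k+1)}{2}}}\sum_{\epsilon_j=\pm1}K(\epsilon_1\alpha_1,\dots,\epsilon_k\alpha_k)\right)(1+o(1)),
\end{align*}
and, with $f(s)=\zeta_{\mathbb{A}}(1+s)\log q$ and $F(\alpha_1,\dots,\alpha_k)=\prod_{j=1}^kX\left(\tfrac12+\alpha_j\right)^{-\frac12}A\left(\tfrac12;\alpha_1,\dots,\alpha_k\right)$, the inner sum $\sum_{\epsilon_j=\pm1}K(\epsilon_1\alpha_1,\dots,\epsilon_k\alpha_k)$ is precisely $K(q^{2g+1};\alpha_1,\dots,\alpha_k)$ in the notation of Lemma 6.1. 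So the first step is purely bookkeeping: match the data of the conjecture to the hypotheses of that lemma.

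Next I would verify those hypotheses. Since $\zeta_{\mathbb{A}}(s)=(1-q^{1-s})^{-1}$, one has $\zeta_{\mathbb{A}}(1+s)=(1-q^{-s})^{-1}=\frac{1}{s\log q}+O(1)$ near $s=0$, so $f(s)=\zeta_{\mathbb{A}}(1+s)\log q$ has a simple pole of residue $1$ at $s=0$ and is otherwise analytic there; and $F$ is symmetric and regular near $(0,\dots,0)$ because $X\left(\tfrac12+\alpha_j\right)^{-\frac12}=q^{-\alpha_j/2}$ is entire while, by Lemma 4.7, $A\left(\tfrac12;\alpha_1,\dots,\alpha_k\right)$ is an absolutely convergent Euler product in a neighbourhood of the origin. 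Applying Lemma 6.1 with $w=0$ then gives
\begin{equation*}
I(q^{2g+1},k,w=0)\sim\left(\tfrac12\log(q^{2g+1})\right)^{\frac{k(k+1)}{2}}A\left(\tfrac12;0,\dots,0\right)2^{\frac{k(k+1)}{2}}\prod_{j=1}^k\frac{j!}{(2j)!},
\end{equation*}
using $F(0,\dots,0)=A\left(\tfrac12;0,\dots,0\right)$ since $X\left(\tfrac12\right)=q^0=1$.

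Finally I would let $\alpha_1,\dots,\alpha_k\to0$ in (\ref{eq:4.17}). The prefactor $(q^{2g+1})^{-\frac12\sum\alpha_j}\prod_jX\left(\tfrac12+\alpha_j\right)^{\frac12}\to1$, and $\tfrac12\log(q^{2g+1})=(g+\tfrac12)\log q$, so dividing by $(\log q)^{\frac{k(k+1)}{2}}$ leaves $(g+\tfrac12)^{\frac{k(k+1)}{2}}\sim g^{\frac{k(k+1)}{2}}$ as $g\to\infty$. The summand is then independent of $u$, so the sum over $\mathcal{I}_{g+1}$ contributes the factor $\#\mathcal{I}_{g+1}=2\zeta_{\mathbb{A}}(2)^{-1}q^{2g+1}$ from Lemma 2.4, and collecting the constants via $2\cdot2^{\frac{k(k+1)}{2}}=2^{\frac{k(k+1)}{2}+1}$ produces the stated asymptotic. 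The one point demanding care — the only real obstacle — is the order of limits: one must send $\alpha_j\to0$ before (or uniformly with) $g\to\infty$, and check that the $(1+o(1))$ coming from Conjecture 3.1 is uniform for small shifts so that the specialisation is legitimate; granting that, the theorem reduces to a direct substitution into Lemma 6.1.
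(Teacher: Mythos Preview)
Your proof is correct and follows essentially the same approach as the paper: you match the shifted-moment expression (\ref{eq:4.17}) to the hypotheses of Lemma 6.1 with the same choices of $f$, $F$ and $K$, apply the lemma, let the shifts tend to zero, and then multiply by $\#\mathcal{I}_{g+1}=2\zeta_{\mathbb{A}}(2)^{-1}q^{2g+1}$ from Lemma 2.4. Your write-up is in fact more explicit than the paper's on the hypothesis verification and constant tracking, and your remark about the order of limits is a fair caveat that the paper does not address.
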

\subsection{Fourth Moment}
For the fourth moment, Theorem 6.2 gives us that
\begin{equation*}
    \sum_{u\in\mathcal{I}_{g+1}}L\left(\frac{1}{2},\chi_u\right)^4\sim\frac{2}{4725}\frac{q^{2g+1}}{\zeta_{\mathbb{A}}(2)}g^{10}A\left(\frac{1}{2};0,0,0,0\right),
\end{equation*}
where
\begin{equation*}
    A\left(\frac{1}{2};0,0,0,0\right)=\prod_P\left(1-\frac{h_4(|P|)}{|P|^{10}(|P|+1)}\right)
\end{equation*}
and
\begin{equation*}
    h_4(x)=30x^9-109x^8+210x^7-274x^6+272x^5-210x^4+119x^3-45x^2+10x-1.
\end{equation*}
\subsection{Fifth Moment}
For the fifth moment, Theorem 6.2 gives us that
\begin{equation*}
    \sum_{u\in\mathcal{I}_{g+1}}L\left(\frac{1}{2},\chi_u\right)^5\sim \frac{2}{4465125}\frac{q^{2g+1}}{\zeta_{\mathbb{A}}(2)}g^{15}A\left(\frac{1}{2};0,0,0,0,0\right),
\end{equation*}
where
\begin{equation*}
    A\left(\frac{1}{2};0,0,0,0,0\right)=\prod_P\left(1-\frac{h_5(|P|)}{|P|^{15}(|{P|+1)}}\right)
\end{equation*}
and
\begin{align*}
    h_5(x)&=65x^{14}-385x^{13}+1220x^{12}-2613x^{11}+4263x^{10}-5725x^9+6540x^8\\
    &-6275x^7+4879x^6-2965x^5+1360x^4-455x^3+105x^2-15x+1. 
\end{align*}
\section{The Ratios Conjecture for Even characteristic}
In this section, we obtain a conjectural asymptotic formula for 
\begin{equation}\label{eq:7.1}
    \sum_{u\in\mathcal{I}_{g+1}}\frac{\prod_{k=1}^KL\left(\frac{1}{2}+\alpha_k,\chi_u\right)}{\prod_{q=1}^QL\left(\frac{1}{2}+\gamma_q,\chi_u\right)},
\end{equation}
where $\mathcal{I}_{g+1}$ is defined in section 2 and the family $\mathcal{U}=\{L(s,\chi_u):u\in\mathcal{I}_{g+1}\}$ is a sympletic family. We will adapt the recipe of \cite{AndradeJungShamesaldeen2018,Andrade2014,Conrey2008a} for this family of L-functions. 
\subsection{Applying the Recipe for L-functions in Even characteristic}
First note that the L-functions in the numerator of (\ref{eq:7.1}) can be written as 
\begin{equation*}
    L(s,\chi_u)=\sum_{f\in\mathbb{A}^+_{\leq g}}\frac{\chi_u(f)}{|f|^s}+\mathcal{X}_u(s)\sum_{f\in\mathbb{A}^+_{\leq g-1}}\frac{\chi_u(f)}{|f|^{1-s}},
\end{equation*}
and those in the denominator can be written as 
\begin{equation*}
    \frac{1}{L(s,\chi_u)}=\prod_P\left(1-\frac{\chi_u(P)}{|P|^s}\right)=\sum_{f\in\mathbb{A}^+}\frac{\mu(f)\chi_u(f)}{|f|^s},
\end{equation*}
where $\mu(f)$ is defined in section 2. In the numerator, we will replace $L(s,\chi_u)$ with $Z_L(s,\chi_u)$ and so the quantity we will apply the recipe to is
\begin{align*}
    &\sum_{u\in\mathcal{I}_{g+1}}\frac{\prod_{k=1}^KZ_L\left(\frac{1}{2}+\alpha_k,\chi_u\right)}{\prod_{q=1}^QL\left(\frac{1}{2}+\gamma_q,\chi_u\right)}\\
    &=\sum_{u\in\mathcal{I}_{g+1}}\prod_{k=1}^KZ_L\left(\frac{1}{2}+\alpha_k,\chi_u\right)\sum_{\substack{h_1,\dotsc,h_Q\\h_i\text{ monic}}}\frac{\mu(h_1)\dotsc\mu(h_Q)\chi_u(h_1)\dotsc\chi_u(h_Q)}{\prod_{q=1}^Q|h_q|^{\frac{1}{2}+\gamma_q}}.
\end{align*}
From (\ref{eq:4.6}), we have that
\begin{equation*}
    \prod_{k=1}^KZ_L\left(\frac{1}{2}+\alpha_k,\chi_u\right)=\sum_{\epsilon\in\{-1,1\}^K}\prod_{k=1}^K\mathcal{X}_u\left(\frac{1}{2}+\epsilon_k\alpha_k\right)^{-\frac{1}{2}}\sum_{\substack{m_1,\dotsc,m_K\\m_j\text{ monic}}}\frac{\chi_u(m_1\dotsc m_K)}{\prod_{k=1}^K|m_j|^{\frac{1}{2}+\epsilon_k\alpha_k}},
\end{equation*}
thus
\begin{align*}
    &\sum_{u\in\mathcal{I}_{g+1}}\frac{\prod_{k=1}^KZ_L\left(\frac{1}{2}+\alpha_k,\chi_u\right)}{\prod_{q=1}^QL\left(\frac{1}{2}+\gamma_q,\chi_u\right)}\\
    &=\sum_{u\in\mathcal{I}_{g+1}}\sum_{\epsilon\in\{-1,1\}^K}\prod_{k=1}^K\mathcal{X}_u\left(\frac{1}{2}+\epsilon_k\alpha_k\right)^{-\frac{1}{2}}\sum_{\substack{m_1,\dotsc,m_K\\h_1,\dotsc,h_Q\\m_j,h_i\text{ monic}}}\frac{\prod_{q=1}^Q\mu(h_q)\chi_u\left(\prod_{k=1}^Km_k\prod_{q=1}^Qh_q\right)}{\prod_{k=1}^K|m_k|^{\frac{1}{2}+\epsilon_k\alpha_k}\prod_{q=1}^Q|h_q|^{\frac{1}{2}+\gamma_q}}.
\end{align*}
Using Lemma 4.6, we have that
\begin{align*}
    &\lim_{g\rightarrow\infty}\sum_{u\in\mathcal{I}_{g+1}}\sum_{\epsilon\in\{-1,1\}^K}\prod_{k=1}^KX\left(\frac{1}{2}+\epsilon_k\alpha_k\right)^{-\frac{1}{2}}\sum_{\substack{m_1,\dotsc,m_K\\h_1,\dotsc,h_Q\\m_j,h_i\text{ monic}}}\frac{\prod_{q=1}^Q\mu(h_q)\chi_u\left(\prod_{k=1}^Km_k\prod_{q=1}^Qh_q\right)}{\prod_{k=1}^K|m_k|^{\frac{1}{2}+\epsilon_k\alpha_k}\prod_{q=1}^Q|h_q|^{\frac{1}{2}+\gamma_q}}\\
    &=\sum_{\epsilon\in\{-1,1\}^K}\prod_{k=1}^KX\left(\frac{1}{2}+\epsilon_k\alpha_k\right)^{-\frac{1}{2}}\sum_{\substack{m_1,\dotsc,m_K\\h_1,\dotsc,h_Q\\m_j,h_i\text{ monic}}}\frac{\prod_{q=1}^Q\mu(h_q)\delta\left(\prod_{k=1}^Km_k\prod_{q=1}^Qh_q\right)}{\prod_{k=1}^K|m_k|^{\frac{1}{2}+\epsilon_k\alpha_k}\prod_{q=1}^Q|h_q|^{\frac{1}{2}+\gamma_q}},
\end{align*}
where
\[
\delta(n)=
\begin{cases}
\prod_{P|n}\left(1+\frac{1}{|P|}\right)^{-1}&\text{if }n\text{ is a square},\\
0&\text{otherwise}.
\end{cases}
\]
Let
\begin{equation}
    G_{\mathcal{U}}(\alpha;\gamma)=\sum_{\substack{m_1,\dotsc,m_K\\h_1,\dotsc,h_Q\\m_j,h_i\text{ monic}}}\frac{\prod_{q=1}^Q\mu(h_q)\delta\left(\prod_{k=1}^Km_k\prod_{q=1}^Qh_q\right)}{\prod_{k=1}^K|m_k|^{\frac{1}{2}+\alpha_k}\prod_{q=1}^Q|h_q|^{\frac{1}{2}+\gamma_q}},
\end{equation}
then we can express $G_{\mathcal{U}}(\alpha;\gamma)$ as a convergent Euler product provided that $\Re(\alpha_k)>0$ and $\Re(\gamma_q)>0$. Thus
\begin{equation*}
    G_{\mathcal{U}}(\alpha;\gamma)=\prod_P\left(1+\left(1+\frac{1}{|P|}\right)^{-1}\sum_{0<\sum_ka_k+\sum_qc_q\text{ is even}}\frac{\prod_{q=1}^Q\mu(P^{c_q})}{|P|^{\sum_ka_k\left(\frac{1}{2}+\alpha_k\right)+\sum_qc_q\left(\frac{1}{2}+\gamma_q\right)}}\right).
\end{equation*}
The above expression will enable us to locate the zeros and poles. Therefore we have
\begin{align*}
    &G_{\mathcal{U}}(\alpha;\gamma)=\prod\Bigg(1+\left(1+\frac{1}{|P|}\right)^{-1}\\
    &\times \left[\sum_{1\leq j\leq k\leq K}\frac{1}{|P|^{\left(\frac{1}{2}+\alpha_j\right)+\left(\frac{1}{2}+\alpha_k\right)}}+\sum_{1\leq q<r\leq Q}\frac{\mu(P)^2}{|P|^{\left(\frac{1}{2}+\gamma_q\right)+\left(\frac{1}{2}+\gamma_r\right)}}+\sum_{k=1}^K\sum_{q=1}^Q\frac{\mu(P)}{|P|^{\left(\frac{1}{2}+\alpha_k\right)+\left(\frac{1}{2}+\gamma_q\right)}}+\dotsc\right]\Bigg),
\end{align*}
where $\dotsc$ indicates that the terms converge. We have that the terms with $\sum_{k=1}^Ka_k+\sum_{q=1}^Qc_q=2$ contribute to the poles and zeros. The poles come from when the terms with $a_j=a_k=1$ for $1\leq j\leq k\leq K$ and from when $c_q=c_r=1$ for $1\leq q<r\leq Q$. The contribution of the zeros arrive from terms with $a_k=c_q=1$ with $1\leq k\leq K$ and $1\leq q\leq Q$. The contribution of all these zeros and poles is
\begin{equation*}
    Y_{\mathcal{U}}(\alpha;\gamma)=\frac{\prod_{1\leq j\leq k\leq K}\zeta_{\mathbb{A}}(1+\alpha_j+\alpha_k)\prod_{1\leq q<r\leq Q}\zeta_{\mathbb{A}}(1+\gamma_q+\gamma_r)}{\prod_{k=1}^K\prod_{q=1}^Q\zeta_{\mathbb{A}}(1+\alpha_k+\gamma_q)}.
\end{equation*}
So when we factor out $Y_{\mathcal{U}}$ from $G_{\mathcal{U}}$, we are left with an Euler product $A_{\mathcal{U}}$, which is absolutely convergent for all the variables in the small discs around zero:
\begin{align*}
    A_{\mathcal{U}}(\alpha;\gamma)&=\prod_P\frac{\prod_{1\leq j\leq k\leq K}\left(1-\frac{1}{|P|^{1+\alpha_j+\alpha_k}}\right)\prod_{1\leq q<r\leq Q}\left(1-\frac{1}{|P|^{1+\gamma_q+\gamma_r}}\right)}{\prod_{k=1}^K\prod_{q=1}^Q\left(1-\frac{1}{|P|^{1+\alpha_k+\gamma_q}}\right)}\\
    &\times \left(1+\left(1+\frac{1}{|P|}\right)^{-1}\sum_{0<\sum_k a_k+\sum_qc_q\text{ is even}}\frac{\prod_{q=1}^Q\mu(P^{c_q})}{|P|^{\sum_ka_k\left(\frac{1}{2}+\alpha_k\right)+\sum_qc_q\left(\frac{1}{2}+\gamma_q\right)}}\right).
\end{align*}
So we conclude that
\begin{align*}
    &\sum_{u\in\mathcal{I}_{g+1}}\frac{\prod_{k=1}^KZ_L\left(\frac{1}{2}+\alpha_k,\chi_u\right)}{\prod_{q=1}^QL\left(\frac{1}{2}+\gamma_q,\chi_u\right)}\\
    &=\sum_{u\in\mathcal{I}_{g+1}}\sum_{\epsilon\in\{-1,1\}^K}\prod_{k=1}^K\mathcal{X}_u\left(\frac{1}{2}+\epsilon_k\alpha_k\right)^{-\frac{1}{2}}A_{\mathcal{U}}(\epsilon_1\alpha_1,\dotsc,\epsilon_K\alpha_K;\gamma)Y_{\mathcal{U}}(\epsilon_1\alpha_1,\dotsc,\epsilon_K\alpha_K;\gamma)+o(q^{2g+1}).
\end{align*}
Using the definition of $Z_L(s,\chi_u)$ and $\mathcal{X}_u(s)$, we have
\begin{align}\label{eq:7.3}
    &\sum_{u\in\mathcal{I}_{g+1}}\frac{\prod_{k=1}^KL\left(\frac{1}{2}+\alpha_k,\chi_u\right)}{\prod_{q=1}^QL\left(\frac{1}{2}+\gamma_q,\chi_u\right)}\nonumber\\
    &=\sum_{u\in\mathcal{I}_{g+1}}\sum_{\epsilon\in\{-1,1\}^K}(q^{2g+1})^{\frac{1}{2}\sum_{k=1}^K(\epsilon_k\alpha_k-\alpha_k)}\prod_{k=1}^KX\left(\frac{1}{2}+\alpha_k\right)^{\frac{1}{2}}X\left(\frac{1}{2}+\epsilon_k\alpha_k\right)^{-\frac{1}{2}}\nonumber\\
    &\times A_{\mathcal{U}}(\epsilon_1\alpha_1,\dotsc,\epsilon_K\alpha_K;\gamma)Y_{\mathcal{U}}(\epsilon_1\alpha_1,\dotsc,\epsilon_K\alpha_K;\gamma)+o(q^{2g+1}). 
\end{align}
\begin{lemma}
We have
\begin{equation*}
    X\left(\frac{1}{2}+\alpha_k\right)^{\frac{1}{2}}X\left(\frac{1}{2}+\epsilon_k\alpha_k\right)^{-\frac{1}{2}}=X\left(\frac{1}{2}+\frac{\alpha_k-\epsilon_k\alpha_k}{2}\right).
\end{equation*}
\end{lemma}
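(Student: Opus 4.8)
The plan is a direct computation starting from the explicit shape of $X$. Recall from Conjecture 3.1 that $X(s)=q^{-\frac12+s}$, so that for any complex number $a$ we have the clean identity $X\!\left(\tfrac12+a\right)=q^{a}$. The whole statement will reduce to this observation together with the fact that the exponential map $a\mapsto q^{a}$ turns sums into products.

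First I would rewrite each of the three factors appearing in the claim using $X\!\left(\tfrac12+a\right)=q^{a}$: the factor $X\!\left(\tfrac12+\alpha_k\right)^{1/2}$ becomes $q^{\alpha_k/2}$, the factor $X\!\left(\tfrac12+\epsilon_k\alpha_k\right)^{-1/2}$ becomes $q^{-\epsilon_k\alpha_k/2}$, and the right-hand side $X\!\left(\tfrac12+\tfrac{\alpha_k-\epsilon_k\alpha_k}{2}\right)$ becomes $q^{(\alpha_k-\epsilon_k\alpha_k)/2}$. Multiplying the first two gives $q^{\alpha_k/2}\,q^{-\epsilon_k\alpha_k/2}=q^{(\alpha_k-\epsilon_k\alpha_k)/2}$, which is exactly the right-hand side, so the identity follows. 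If one prefers to avoid fractional powers of $q$ for non-real exponents one can instead argue by the two cases $\epsilon_k=1$ and $\epsilon_k=-1$ separately: when $\epsilon_k=1$ both sides equal $X(\tfrac12)=1$, and when $\epsilon_k=-1$ both sides equal $X\!\left(\tfrac12+\alpha_k\right)=q^{\alpha_k}$, using $X\!\left(\tfrac12+\alpha_k\right)^{1/2}X\!\left(\tfrac12-\alpha_k\right)^{-1/2}=q^{\alpha_k/2}\cdot q^{\alpha_k/2}$ together with Lemma 4.4.

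There is essentially no obstacle here; the only point requiring a word of care is the choice of branch for the square roots $X(\cdot)^{\pm1/2}$, which is the same convention already fixed in the definition of $Z_L(s,\chi_u)$ in \eqref{eq:4.5} and used throughout Section 4, so consistency is automatic. I would therefore present the argument as the one-line exponent computation above, with the case split offered as an alternative, and conclude that the lemma holds.
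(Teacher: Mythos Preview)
Your proof is correct and is precisely the direct computation the paper has in mind: the paper's own proof consists of the single sentence ``The proof follows directly from the definition of $X(s)$,'' and your argument simply unwinds that by using $X\!\left(\tfrac12+a\right)=q^{a}$ and matching exponents.
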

\begin{proof}
The proof follows directly from the definition of $X(s)$.
\end{proof}
\par\noindent
Therefore, using Lemma 7.1, (\ref{eq:7.3}) gives us
\begin{align*}
    &\sum_{u\in\mathcal{I}_{g+1}}\frac{\prod_{k=1}^KL\left(\frac{1}{2}+\alpha_k,\chi_u\right)}{\prod_{q=1}^QL\left(\frac{1}{2}+\gamma_q,\chi_u\right)}\\
    &=\sum_{u\in\mathcal{I}_{g+1}}\sum_{\epsilon\in\{-1,1\}^K}(q^{2g+1})^{\frac{1}{2}\sum_{k=1}^K(\epsilon_k\alpha_k-\alpha_k)}\prod_{k=1}^KX\left(\frac{1}{2}+\frac{\alpha_k-\epsilon_k\alpha_k}{2}\right)\\
    &\times A_{\mathcal{U}}(\epsilon_1\alpha_1,\dotsc,\epsilon_K\alpha_K;\gamma)Y_{\mathcal{U}}(\epsilon_1\alpha_1,\dotsc,\epsilon_K\alpha_K;\gamma)+o(q^{2g+1}).
\end{align*}
If we let 
\begin{equation*}
    H_{\mathcal{I},\alpha,\gamma}(w)=(q^{2g+1})^{\frac{1}{2}\sum_{k=1}^Kw_k}\prod_{k=1}^KX\left(\frac{1}{2}+\frac{\alpha_k-w_k}{2}\right)A_{\mathcal{U}}(w_1,\dotsc,w_K;\gamma)Y_{\mathcal{U}}(w_1,\dotsc,w_k;\gamma),
\end{equation*}
then 
\begin{equation*}
    \sum_{u\in\mathcal{I}_{g+1}}\frac{\prod_{k=1}^KL\left(\frac{1}{2}+\alpha_k,\chi_u\right)}{\prod_{q=1}^QL\left(\frac{1}{2}+\gamma_q,\chi_u\right)}=\sum_{u\in\mathcal{I}_{g+1}}(q^{2g+1})^{-\frac{1}{2}\sum_{k=1}^K\alpha_k}\sum_{\epsilon\in\{-1,1\}^K}H_{\mathcal{I},\alpha,\gamma}(\epsilon_1\alpha_1,\dotsc,\epsilon_K\alpha_K)+o(q^{2g+1}),
\end{equation*}
which is precisely the formulae given in Conjecture 3.2.
\subsection{The Closed form Expression for $A_{\mathcal{U}}$}
In this subsection, we refine the conjecture by deriving a closed form expression for the Euler product $A_{\mathcal{U}}(\alpha;\gamma)$. Suppose that
\begin{equation*}
    f(x)=\sum_{n=1}^{\infty}a_nu^n,
\end{equation*}
then 
\begin{equation*}
    \sum_{o<n\text{ is even}}u_nx^n=\frac{1}{2}\left(f(x)+f(-x)-2\right).
\end{equation*}
Thus
\begin{align}\label{eq:7.4}
    1+\left(1+\frac{1}{|P|}\right)^{-1}\sum_{0<n\text{ is even}}u_nx^n&=1+\left(1+\frac{1}{|P|}\right)^{-1}\left(\frac{1}{2}\left(f(x)+f(-x)-2\right)\right)\nonumber\\
    &=\frac{1}{1+\frac{1}{|P|}}\left(\frac{f(x)+f(-x)}{2}+\frac{1}{|P|}\right). 
\end{align}
Now let 
\begin{align}\label{eq:7.5}
    f\left(\frac{1}{|P|}\right)&=\sum_{a_k,c_q}\frac{\prod_{q=1}^Q\mu(P^{c_q})}{|P|^{\sum_ka_k\left(\frac{1}{2}+\alpha_k\right)+\sum_qc_q\left(\frac{1}{2}+\gamma_q\right)}}\nonumber\\
    &=\sum_{a_k}\frac{1}{|P|^{\sum_ka_k\left(\frac{1}{2}+\alpha_k\right)}}\sum_{c_q}\frac{\prod_{q=1}^Q\mu(P^{c_q})}{|P|^{\sum_qc_q\left(\frac{1}{2}+\gamma_q\right)}}\nonumber\\
    &=\sum_{a_k}\prod_{k=1}^K\frac{1}{|P|^{a_k\left(\frac{1}{2}+\alpha_k\right)}}\sum_{c_q}\prod_{q=1}^Q\frac{\mu(P^{c_q})}{|P|^{c_q\left(\frac{1}{2}+\gamma_q\right)}}\nonumber\\
    &=\frac{\prod_{q=1}^Q\left(1-\frac{1}{|P|^{\frac{1}{2}+\gamma_q}}\right)}{\prod_{k=1}^K\left(1-\frac{1}{|P|^{\frac{1}{2}+\alpha_k}}\right)},
\end{align}
which leads us to the following Lemma. 
\begin{lemma}
We have
\begin{align*}
    &1+\left(1+\frac{1}{|P|}\right)^{-1}\sum_{o<\sum_ka_k+\sum_qc_q\text{ is even}}\frac{\prod_{q=1}^Q\mu(P^{c_q})}{|P|^{\sum_ka_k\left(\frac{1}{2}+\alpha_k\right)+\sum_qc_q\left(\frac{1}{2}+\gamma_q\right)}}\\
    &=\frac{1}{1+\frac{1}{|P|}}\left(\frac{1}{2}\frac{\prod_{q=1}^Q\left(1-\frac{1}{|P|^{\frac{1}{2}+\gamma_q}}\right)}{\prod_{k=1}^K\left(1-\frac{1}{|P|^{\frac{1}{2}+\alpha_k}}\right)}+\frac{1}{2}\frac{\prod_{q=1}^Q\left(1+\frac{1}{|P|^{\frac{1}{2}+\gamma_q}}\right)}{\prod_{k=1}^K\left(1+\frac{1}{|P|^{\frac{1}{2}+\alpha_k}}\right)}+\frac{1}{|P|}\right).
\end{align*}
\end{lemma}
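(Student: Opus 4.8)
The plan is to read off the left-hand side as the ``even part'' of a single-variable generating function attached to $P$, exactly as indicated by the manipulation $(7.4)$–$(7.5)$, and then to evaluate that generating function in closed form by Euler-factoring over the indices $k$ and $q$. First I would fix $P$, introduce a formal parameter $x$, and set
\begin{equation*}
    f_P(x)=\sum_{a_1,\dots,a_K\ge 0}\;\sum_{c_1,\dots,c_Q\ge 0}\frac{\prod_{q=1}^Q\mu(P^{c_q})}{|P|^{\sum_k a_k\left(\frac12+\alpha_k\right)+\sum_q c_q\left(\frac12+\gamma_q\right)}}\,x^{\sum_k a_k+\sum_q c_q}.
\end{equation*}
This series converges for $|x|\le 1$ since $\Re\left(\tfrac12+\alpha_k\right)>0$ and $\Re\left(\tfrac12+\gamma_q\right)>0$, its $a=c=0$ term is $1$, and at $x=1$ it equals the quantity written $f(1/|P|)$ in $(7.5)$. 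Tracking the exponent $n:=\sum_k a_k+\sum_q c_q$, the restriction of the sum to $0<n$ even is the standard even-part extraction $\tfrac12\bigl(f_P(1)+f_P(-1)\bigr)-1=\tfrac12\bigl(f_P(1)+f_P(-1)-2\bigr)$, so the left-hand side of the lemma is $1+\left(1+\tfrac1{|P|}\right)^{-1}\cdot\tfrac12\bigl(f_P(1)+f_P(-1)-2\bigr)$, which after using $1-\left(1+\tfrac1{|P|}\right)^{-1}=\tfrac1{|P|}\left(1+\tfrac1{|P|}\right)^{-1}$ is exactly $\left(1+\tfrac1{|P|}\right)^{-1}\bigl(\tfrac12(f_P(1)+f_P(-1))+\tfrac1{|P|}\bigr)$; this is the content of $(7.4)$ made precise.

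Next I would compute $f_P(x)$ explicitly. The summand factors as $\prod_{k}\bigl(x/|P|^{\frac12+\alpha_k}\bigr)^{a_k}\cdot\prod_q \mu(P^{c_q})\bigl(x/|P|^{\frac12+\gamma_q}\bigr)^{c_q}$, so the multiple sum splits into a product of independent one-dimensional sums, one per index. Each $a_k$-sum is a geometric series equal to $\bigl(1-x/|P|^{\frac12+\alpha_k}\bigr)^{-1}$, and each $c_q$-sum collapses to $1-x/|P|^{\frac12+\gamma_q}$ because $\mu(P^{c})=0$ for $c\ge 2$. Hence
\begin{equation*}
    f_P(x)=\frac{\prod_{q=1}^Q\left(1-\dfrac{x}{|P|^{\frac12+\gamma_q}}\right)}{\prod_{k=1}^K\left(1-\dfrac{x}{|P|^{\frac12+\alpha_k}}\right)},
\end{equation*}
which recovers $(7.5)$ at $x=1$ and gives $f_P(-1)=\prod_{q}\bigl(1+|P|^{-\frac12-\gamma_q}\bigr)\big/\prod_{k}\bigl(1+|P|^{-\frac12-\alpha_k}\bigr)$.

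Finally I would substitute these two closed forms for $f_P(1)$ and $f_P(-1)$ into the expression from the first step; the result is precisely the stated identity. No step here is a genuine obstacle: the only things to check carefully are the interchange of summation used to factor $f_P$ (justified by absolute convergence for $|x|\le 1$ in the given range of the shifts), the truncation of the $c_q$-sum coming from the vanishing of $\mu(P^{c})$ for $c\ge2$, and the elementary identity $1-\left(1+\tfrac1{|P|}\right)^{-1}=\tfrac1{|P|}\left(1+\tfrac1{|P|}\right)^{-1}$. This is entirely a generating-function bookkeeping argument, parallel to the one already carried out in passing from $(7.4)$ to $(7.5)$.
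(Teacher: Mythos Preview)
Your argument is correct and is essentially the paper's own proof, carried out more carefully. The paper's proof consists of the single sentence ``The proof follows from (\ref{eq:7.4}) and (\ref{eq:7.5}),'' where (\ref{eq:7.4}) is exactly your even-part extraction and (\ref{eq:7.5}) is exactly your closed-form evaluation of $f_P$ at $x=1$; you have simply made explicit the auxiliary parameter $x$ tracking $\sum_k a_k+\sum_q c_q$ and written down $f_P(-1)$ as well, which the paper leaves implicit.
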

\begin{proof}
The proof follows from (\ref{eq:7.4}) and (\ref{eq:7.5}). 
\end{proof}
\par\noindent
The next Corollary is immediate from Lemma 7.2.
\begin{cor}
We have
\begin{align*}
    A_{\mathcal{U}}(\alpha;\gamma)&=\prod_P\frac{\prod_{1\leq j\leq k\leq K}\left(1-\frac{1}{|P|^{1+\alpha_j+\alpha_k}}\right)\prod_{1\leq q<r\leq Q}\left(1-\frac{1}{|P|^{1+\gamma_q+\gamma_r}}\right)}{\prod_{k=1}^K\prod_{q=1}^Q\left(1-\frac{1}{|P|^{1+\alpha_k+\gamma_q}}\right)}\\
    &\times \frac{1}{1+\frac{1}{|P|}}\left(\frac{1}{2}\frac{\prod_{q=1}^Q\left(1-\frac{1}{|P|^{\frac{1}{2}+\gamma_q}}\right)}{\prod_{k=1}^K\left(1-\frac{1}{|P|^{\frac{1}{2}+\alpha_k}}\right)}+\frac{1}{2}\frac{\prod_{q=1}^Q\left(1+\frac{1}{|P|^{\frac{1}{2}+\gamma_q}}\right)}{\prod_{k=1}^K\left(1+\frac{1}{|P|^{\frac{1}{2}+\alpha_k}}\right)}+\frac{1}{|P|}\right).
\end{align*}
\end{cor}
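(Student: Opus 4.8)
The plan is to deduce Corollary 7.3 directly from Lemma 7.2 by a primewise substitution inside the Euler product defining $A_{\mathcal{U}}(\alpha;\gamma)$. Recall from the previous subsection that the local factor of $A_{\mathcal{U}}(\alpha;\gamma)$ at a monic irreducible $P$ is the product of the rational prefactor
\begin{equation*}
\frac{\prod_{1\leq j\leq k\leq K}\left(1-\frac{1}{|P|^{1+\alpha_j+\alpha_k}}\right)\prod_{1\leq q<r\leq Q}\left(1-\frac{1}{|P|^{1+\gamma_q+\gamma_r}}\right)}{\prod_{k=1}^K\prod_{q=1}^Q\left(1-\frac{1}{|P|^{1+\alpha_k+\gamma_q}}\right)}
\end{equation*}
with the correction factor
\begin{equation*}
B_P=1+\left(1+\frac{1}{|P|}\right)^{-1}\sum_{0<\sum_k a_k+\sum_q c_q\text{ is even}}\frac{\prod_{q=1}^Q\mu(P^{c_q})}{|P|^{\sum_k a_k\left(\frac{1}{2}+\alpha_k\right)+\sum_q c_q\left(\frac{1}{2}+\gamma_q\right)}}.
\end{equation*}

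First I would apply Lemma 7.2, which is precisely the identity replacing $B_P$ by the closed form
\begin{equation*}
B_P=\frac{1}{1+\frac{1}{|P|}}\left(\frac{1}{2}\frac{\prod_{q=1}^Q\left(1-\frac{1}{|P|^{\frac{1}{2}+\gamma_q}}\right)}{\prod_{k=1}^K\left(1-\frac{1}{|P|^{\frac{1}{2}+\alpha_k}}\right)}+\frac{1}{2}\frac{\prod_{q=1}^Q\left(1+\frac{1}{|P|^{\frac{1}{2}+\gamma_q}}\right)}{\prod_{k=1}^K\left(1+\frac{1}{|P|^{\frac{1}{2}+\alpha_k}}\right)}+\frac{1}{|P|}\right).
\end{equation*}
Substituting this expression for $B_P$ into the product over all monic irreducible $P$, while leaving the rational prefactor untouched, yields verbatim the formula asserted in Corollary 7.3. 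No new analytic input is required: the corollary is a purely algebraic rewriting of the local factors. The only point worth recording explicitly is that the rewritten Euler product still converges absolutely in the small discs about the origin in the variables $\alpha_k,\gamma_q$; this is checked by expanding the closed form of $B_P$ in powers of $|P|^{-1}$ and observing that $B_P=1+O(|P|^{-1-\delta})$ for some $\delta>0$ when $\Re(\alpha_k),\Re(\gamma_q)>0$ are small, while the rational prefactor is likewise $1+O(|P|^{-1-\delta})$, so absolute convergence is inherited from the form of $A_{\mathcal{U}}$ already established.

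What little genuine work there is lies in Lemma 7.2 rather than in the corollary. The key observation there is that restricting the multi-sum to indices with $\sum_k a_k+\sum_q c_q$ even corresponds to the even-part extraction $\frac{1}{2}\bigl(f(x)+f(-x)-2\bigr)$ applied to the generating function $f$ of \eqref{eq:7.4}--\eqref{eq:7.5}, and that $f(1/|P|)$ factors as a product of full geometric series in the $\alpha_k$-variables (every $a_k\geq 0$ being admissible) against a product of two-term sums in the $\gamma_q$-variables (since $\mu(P^{c_q})=0$ for $c_q\geq 2$), which is exactly identity \eqref{eq:7.5}. Granting that identity, Corollary 7.3 follows immediately by the substitution described above, and I do not expect any serious obstacle beyond careful bookkeeping of the prime-local factors.
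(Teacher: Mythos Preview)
Your proposal is correct and matches the paper's own treatment: the paper simply states that the corollary is immediate from Lemma 7.2, and your argument is exactly the primewise substitution of the closed form from Lemma 7.2 into the definition of $A_{\mathcal{U}}(\alpha;\gamma)$. The additional remarks you make about absolute convergence and the mechanism behind Lemma 7.2 are accurate but not required for the corollary itself.
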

\subsection{The Final form of the Conjecture}
In this subsection, we express the combinatorial sum as a multiple integral. We begin by stating the following Lemma from \cite{Conrey2008a}. 
\begin{lemma}
Suppose that $F(z)=F(z_1,\dotsc,z_K)$ is a function in $K$ variables, which is symmetric and regular near $(0,\dotsc,0)$. Suppose further that $f(s)$ has a simple pole of residue 1 at $s=0$ but is otherwise analytic in $|s|\leq 1$. Let either
\begin{equation*}
    H(z_1,\dotsc,z_K)=F(z_1,\dotsc,z_K)\prod_{1\leq j\leq k\leq K}f(z_j+z_k)
\end{equation*}
or 
\begin{equation*}
    H(z_1,\dotsc,z_K)=F(z_1,\dotsc,z_K)\prod_{1\leq j<k\leq K}f(z_j+z_k).
\end{equation*}
If $|\alpha_k|<1$, then 
\begin{align*}
    &\sum_{\epsilon\in\{-1,1\}^K}H(\epsilon_1\alpha_1,\dotsc,\epsilon_K\alpha_K)\\
    &=\frac{(-1)^{\frac{K(K-1)}{2}}2^K}{K!}\frac{1}{(2\pi i)^K}\int_{|z_i|=1}\frac{H(z_1,\dotsc,z_K)\Delta(z_1^2,\dotsc,z_K^2)^2\prod_{k=1}^Kz_k}{\prod_{j=1}^K\prod_{k=1}^K(z_k-\alpha_j)(z_k+\alpha_j)}dz_1\dotsc dz_K,
\end{align*}
and
\begin{align*}
    &\sum_{\epsilon\in\{-1,1\}^K}sgn(\epsilon)H(\epsilon_1\alpha_1,\dotsc,\epsilon_K\alpha_K)\\
    &=\frac{(-1)^{\frac{K(K-1)}{2}}2^K}{K!}\frac{1}{(2\pi i)^K}\int_{|z_i|=1}\frac{H(z_1,\dotsc,z_K)\Delta(z_1^2,\dotsc,z_K^2)\prod_{k=1}^K\alpha_k}{\prod_{j=1}^K\prod_{k=1}^K(z_k-\alpha_j)(z_k+\alpha_j)}dz_1\dotsc dz_K.
\end{align*}
\end{lemma}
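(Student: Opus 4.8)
The plan is to prove both identities by evaluating the right-hand side multiple contour integrals through iterated residue calculus, the key being to locate precisely the poles of the integrand inside the region bounded by the contours. We take these contours to enclose all the points $\pm\alpha_j$ and to lie inside the disc in which $f$ is holomorphic apart from its simple pole at $0$, so that $|\alpha_k|$ is correspondingly small. Write $\mathcal{J}(z_1,\dots,z_K)$ for the integrand of the first identity, namely $H(z_1,\dots,z_K)\,\Delta(z_1^2,\dots,z_K^2)^2\prod_{k=1}^K z_k$ divided by $\prod_{j=1}^K\prod_{k=1}^K(z_k-\alpha_j)(z_k+\alpha_j)$.

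First I would determine the singularities of $\mathcal{J}$. The factor $H$ contributes, a priori, simple poles along the hyperplanes $z_i+z_j=0$ (coming from $f(z_i+z_j)$) and, in the ``$1\le j\le k\le K$'' case, along $z_k=0$ (from $f(2z_k)$). However $\Delta(z_1^2,\dots,z_K^2)^2=\prod_{i<j}(z_j-z_i)^2(z_j+z_i)^2$ supplies a double zero on each hyperplane $z_i+z_j=0$, and $\prod_k z_k$ supplies a simple zero on each $z_k=0$, so these potential poles are cancelled. Consequently $\mathcal{J}$ is holomorphic throughout the region enclosed by the contours apart from simple poles along $z_k=\pm\alpha_j$, $1\le j,k\le K$. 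The second, antisymmetric identity is handled in exactly the same way, the Vandermonde factor again cancelling the $f$-poles and $\prod_k z_k$ being replaced by the constant $\prod_k\alpha_k$.

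Next I would compute $\frac{1}{(2\pi i)^K}\oint\cdots\oint\mathcal{J}$ by taking residues one variable at a time, say in the order $z_K,z_{K-1},\dots,z_1$: integrating in $z_K$ picks up the residues at $z_K=\pm\alpha_j$, $j=1,\dots,K$, and one iterates. The crucial bookkeeping point is that any term in which two of the $z$-variables are sent to $\pm\alpha_j$ for the \emph{same} index $j$ vanishes: once $z_K=\epsilon\alpha_j$ is fixed, the factor $(z_i\pm z_K)^2$ from $\Delta(z_1^2,\dots,z_K^2)^2$ becomes $(z_i\mp\epsilon\alpha_j)^2$, a zero of order two that overwhelms the order-one poles of $(z_i-\alpha_j)(z_i+\alpha_j)$ and of $f(z_i+\epsilon\alpha_j)$ there. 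Iterating, the only surviving contributions are indexed by a permutation $\sigma\in S_K$ together with a sign vector $\epsilon\in\{-1,1\}^K$, namely $\operatorname*{Res}_{z_1=\epsilon_1\alpha_{\sigma(1)}}\cdots\operatorname*{Res}_{z_K=\epsilon_K\alpha_{\sigma(K)}}\mathcal{J}$.

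Finally, since $F$ is symmetric and the factors $\prod_{1\le j\le k\le K}f(z_j+z_k)$, $\Delta(z_1^2,\dots,z_K^2)^2$, $\prod_k z_k$ and $\prod_{j,k}(z_k-\alpha_j)(z_k+\alpha_j)$ are all symmetric in $z_1,\dots,z_K$, the $K!$ permutations $\sigma$ contribute equally, cancelling the $1/K!$ in the prefactor. For $\sigma=\mathrm{id}$, each one-variable residue at $z_k=\epsilon_k\alpha_k$ evaluates the rest of the integrand there and divides by $2\epsilon_k\alpha_k\prod_{j\ne k}(\alpha_k^2-\alpha_j^2)$; the numerator's $\prod_k z_k$ then cancels $\prod_k(2\epsilon_k\alpha_k)$ up to $2^{-K}$, the surviving $\Delta(\alpha_1^2,\dots,\alpha_K^2)^2$ cancels against $\prod_k\prod_{j\ne k}(\alpha_k^2-\alpha_j^2)=(-1)^{K(K-1)/2}\Delta(\alpha_1^2,\dots,\alpha_K^2)^2$, and one is left with $\frac{(-1)^{K(K-1)/2}}{2^K}H(\epsilon_1\alpha_1,\dots,\epsilon_K\alpha_K)$. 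Multiplying by the permutation count $K!$ and by the prefactor $\frac{(-1)^{K(K-1)/2}2^K}{K!}$ recovers $\sum_{\epsilon}H(\epsilon_1\alpha_1,\dots,\epsilon_K\alpha_K)$, which is the first identity; for the second, the replacement $\prod_k z_k\mapsto\prod_k\alpha_k$ leaves an extra $\prod_k\epsilon_k=\mathrm{sgn}(\epsilon)$ in each residue, yielding $\sum_\epsilon\mathrm{sgn}(\epsilon)H(\epsilon_1\alpha_1,\dots,\epsilon_K\alpha_K)$. I expect the genuinely delicate step to be the middle one: carefully verifying the pole and zero orders so that all spurious residues — both those at $z_k=0$ and the ``collision'' ones where two variables meet the same $\pm\alpha_j$ — really do vanish, leaving only the clean permutation-indexed terms.
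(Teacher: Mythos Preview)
The paper does not supply its own proof of this lemma: it is simply quoted, with attribution, from \cite{Conrey2008a} (just as the essentially identical Lemma~4.8 is quoted from \cite{Conrey2005}), and no argument is given. Your iterated–residue outline is exactly the standard proof that appears in those original references, so there is nothing in the present paper to compare against; your approach is the expected one and is correct.

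One small sharpening of your collision bookkeeping: when $z_i$ approaches $-\epsilon\alpha_j$ (the sign opposite to the already–fixed $z_K=\epsilon\alpha_j$), the double zero from $\Delta(z_1^2,\dots,z_K^2)^2$ meets \emph{two} simple poles --- one from $(z_i-\alpha_j)(z_i+\alpha_j)$ and one from $f(z_i+\epsilon\alpha_j)$ --- so the singularity is merely removable rather than a genuine zero of positive order. This does not affect your conclusion (a removable singularity still contributes no residue), but it is worth stating precisely, since you flagged this as the delicate step.
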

\par\noindent
Now we are in a position to present the final form of the ratios conjecture for L-functions in even characteristic using the integrals introduced above. Therefore, Conjecture 3.2 can be written as follows.
\begin{conj}
Suppose that the real parts of $\alpha_k$ and $\gamma_q$ are positive. Then
\begin{align*}
    &\sum_{u\in\mathcal{I}_{g+1}}\frac{\prod_{k=1}^KL\left(\frac{1}{2}+\alpha_k,\chi_u\right)}{\prod_{q=1}^QL\left(\frac{1}{2}+\gamma_q,\chi_u\right)}\\
    &=\sum_{u\in\mathcal{I}_{g+1}}(q^{2g+1})^{-\frac{1}{2}\sum_{k=1}^K\alpha_k}\frac{(-1)^{\frac{K(K-1)}{2}}2^K}{K!}\int_{|z_i|=1}\frac{H_{\mathcal{I},\alpha,\gamma}(z_1,\dotsc,z_K)\Delta(z_1^2,\dotsc,z_K^2)^2\prod_{k=1}^Kz_k}{\prod_{j=1}^K\prod_{k=1}^K(z_k-\alpha_j)(z_k+\alpha_j)}dz_1\dotsc dz_K.
\end{align*}
\end{conj}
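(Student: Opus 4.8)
The plan is to obtain the stated reformulation as an immediate consequence of Conjecture 3.2 and the lemma of Conrey, Farmer and Zirnbauer quoted just above (Lemma 7.4). Recall that in the previous subsection Conjecture 3.2 was brought into the form
\begin{equation*}
\sum_{u\in\mathcal{I}_{g+1}}\frac{\prod_{k=1}^KL\left(\frac{1}{2}+\alpha_k,\chi_u\right)}{\prod_{q=1}^QL\left(\frac{1}{2}+\gamma_q,\chi_u\right)}=\sum_{u\in\mathcal{I}_{g+1}}(q^{2g+1})^{-\frac{1}{2}\sum_{k=1}^K\alpha_k}\sum_{\epsilon\in\{-1,1\}^K}H_{\mathcal{I},\alpha,\gamma}(\epsilon_1\alpha_1,\dotsc,\epsilon_K\alpha_K)+o(q^{2g+1}).
\end{equation*}
Since the prefactor $(q^{2g+1})^{-\frac{1}{2}\sum_k\alpha_k}$ and the outer sum over $u$ are inert, it suffices to prove that, for fixed shifts with positive real parts, $\sum_{\epsilon\in\{-1,1\}^K}H_{\mathcal{I},\alpha,\gamma}(\epsilon_1\alpha_1,\dotsc,\epsilon_K\alpha_K)$ equals the $K$-fold contour integral on the right-hand side of the conjecture. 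This is exactly the first of the two identities in Lemma 7.4, provided $H_{\mathcal{I},\alpha,\gamma}$ is first put into the shape to which that lemma applies.

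Accordingly, the first step is to write $H_{\mathcal{I},\alpha,\gamma}(z_1,\dotsc,z_K)=F(z_1,\dotsc,z_K)\prod_{1\leq j\leq k\leq K}f(z_j+z_k)$. I would take $f(s)=\zeta_{\mathbb{A}}(1+s)\log q$, which has a simple pole of residue $1$ at $s=0$ and is otherwise regular near the origin (see (4.15)), and factor the $\frac{1}{2}K(K+1)$ values $\zeta_{\mathbb{A}}(1+z_j+z_k)$ with $1\leq j\leq k\leq K$ out of $Y_{\mathcal{U}}(z;\gamma)$; this leaves
\begin{equation*}
F(z_1,\dotsc,z_K)=(\log q)^{-\frac{K(K+1)}{2}}(q^{2g+1})^{\frac{1}{2}\sum_{k=1}^Kz_k}\prod_{k=1}^KX(\tfrac{1}{2}+\tfrac{\alpha_k-z_k}{2})A_{\mathcal{U}}(z;\gamma)\frac{\prod_{1\leq q<r\leq Q}\zeta_{\mathbb{A}}(1+\gamma_q+\gamma_r)}{\prod_{k=1}^K\prod_{q=1}^Q\zeta_{\mathbb{A}}(1+z_k+\gamma_q)}.
\end{equation*}
The only genuine content of the argument is to check that $F$ is symmetric in $z_1,\dotsc,z_K$ and regular near $(0,\dotsc,0)$, which are the hypotheses of Lemma 7.4. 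Symmetry is precisely where the simple shape of the even-characteristic functional-equation factor enters: since $X(s)=q^{-\frac{1}{2}+s}$ one has $\prod_{k=1}^KX(\tfrac{1}{2}+\tfrac{\alpha_k-z_k}{2})=q^{\frac{1}{2}\sum_k(\alpha_k-z_k)}$, so that
\begin{equation*}
(q^{2g+1})^{\frac{1}{2}\sum_{k=1}^Kz_k}\prod_{k=1}^KX(\tfrac{1}{2}+\tfrac{\alpha_k-z_k}{2})=q^{\frac{1}{2}\sum_{k=1}^K\alpha_k}\,q^{g\sum_{k=1}^Kz_k},
\end{equation*}
and the apparent dependence on the individual $\alpha_k$ collapses into an overall constant, leaving a factor symmetric in the $z$'s. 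The factor $A_{\mathcal{U}}(z;\gamma)$ is symmetric by inspection of its Euler product (derived in the preceding subsections), and that Euler product converges absolutely on a polydisc about the origin; moreover $\prod_{k=1}^K\prod_{q=1}^Q\zeta_{\mathbb{A}}(1+z_k+\gamma_q)^{-1}=\prod_{k=1}^K\prod_{q=1}^Q(1-q^{-z_k-\gamma_q})$ is entire, and the constant $\prod_{1\leq q<r\leq Q}\zeta_{\mathbb{A}}(1+\gamma_q+\gamma_r)$ is finite because $\Re(\gamma_q)>0$. Hence $F$ is symmetric and holomorphic in a neighbourhood of the origin.

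With these hypotheses in place, I would apply the first identity of Lemma 7.4 --- the one without the $\mathrm{sgn}(\epsilon)$ weight, which is the relevant case here because $\mathcal{U}$ is a symplectic family and the sign of its functional equation is always $+1$, so that no factor $\prod_j\epsilon_j$ appears in the $\epsilon$-sum. This rewrites $\sum_{\epsilon}H_{\mathcal{I},\alpha,\gamma}(\epsilon_1\alpha_1,\dotsc,\epsilon_K\alpha_K)$ as the $K$-fold contour integral of the conjecture, the contours being small circles about the origin that enclose each $\pm\alpha_j$ and lie inside the region where $A_{\mathcal{U}}$ converges and where $f(z_j+z_k)$ has no singularity other than the simple pole at $z_j+z_k=0$; one may shrink the $\alpha_j$ and $\gamma_q$ first, which is harmless since the final statement only concerns a neighbourhood of the origin. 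Summing over $u\in\mathcal{I}_{g+1}$ and restoring the factor $(q^{2g+1})^{-\frac{1}{2}\sum_k\alpha_k}$ then yields exactly the formula of the conjecture. I do not expect any real obstacle beyond the symmetry-and-holomorphy verification of $F$ in the previous paragraph; everything else is a direct invocation of Lemma 7.4 and of the derivation of Conjecture 3.2 already carried out above.
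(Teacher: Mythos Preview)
Your proposal is correct and follows exactly the route the paper takes: the paper simply states that Conjecture 7.5 is the reformulation of Conjecture 3.2 obtained by feeding $H_{\mathcal{I},\alpha,\gamma}$ through Lemma 7.4, with no further argument. Your write-up is in fact more careful than the paper's, since you explicitly verify the symmetry and regularity hypotheses of Lemma 7.4 --- in particular the observation that $\prod_{k}X\bigl(\tfrac{1}{2}+\tfrac{\alpha_k-z_k}{2}\bigr)=q^{\frac{1}{2}\sum_k(\alpha_k-z_k)}$ collapses the apparent $\alpha_k$--$z_k$ pairing into a symmetric factor is a point the paper leaves implicit.
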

\section{Applications of the Ratios Conjecture}
In this section we present an application of the Ratios Conjecture 3.2. Specifically we derive a formula for the one-level density. 
\subsection{One-level Density}
Our goal is to consider
\begin{equation*}
    R_{\mathcal{U}}(\alpha;\gamma)=\sum_{u\in\mathcal{I}_{g+1}}\frac{L\left(\frac{1}{2}+\alpha,\chi_u\right)}{L\left(\frac{1}{2}+\gamma,\chi_u\right)}.
\end{equation*}
In this case, the conjecture is.
\begin{conj}
With $-\frac{1}{4}<\Re(\alpha)<\frac{1}{4}, \frac{1}{\log(q^{2g+1})}\ll \Re(\gamma)<\frac{1}{4}$ and $\Im(\alpha), \Im(\gamma)\ll (q^{2g+1})^{1-\epsilon}$ for every $\epsilon>0$, we have
\begin{align}
    R_{\mathcal{U}}(\alpha;\gamma)&=\sum_{u\in\mathcal{I}_{g+1}}\frac{L\left(\frac{1}{2}+\alpha,\chi_u\right)}{L\left(\frac{1}{2}+\gamma,\chi_u\right)}\nonumber\\
    &=\sum_{u\in\mathcal{I}_{g+1}}\left(A_{\mathcal{U}}(\alpha;\gamma)\frac{\zeta_{\mathbb{A}}(1+2\alpha)}{\zeta_{\mathbb{A}}(1+\alpha+\gamma)}+(q^{2g+1})^{-\alpha}X\left(\frac{1}{2}+\alpha\right)A_{\mathcal{U}}(-\alpha;\gamma)\frac{\zeta_{\mathbb{A}}(1-2\alpha)}{\zeta_{\mathbb{A}}(1-\alpha+\gamma)}\right)\nonumber\\
    &+o(q^{2g+1}),
\end{align}
where
\begin{equation*}
    A_{\mathcal{U}}(\alpha;\gamma)=\prod_P\left(1-\frac{1}{|P|^{1+\alpha+\gamma}}\right)^{-1}\left(1-\frac{1}{|P|^{1+2\alpha}(|P|+1)}-\frac{1}{|P|^{\alpha+\gamma}(|P|+1)}\right).
\end{equation*}
\end{conj}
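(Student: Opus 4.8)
The plan is to derive Conjecture 8.1 as the special case $K=Q=1$ of the general ratios conjecture (Conjecture 3.2, equivalently its integral form Conjecture 7.2), together with an explicit evaluation in closed form of the arithmetic factor $A_{\mathcal{U}}(\alpha;\gamma)$ in this case.

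First I would set $K=1$ and $Q=1$ in Conjecture 3.2. The sum over $\epsilon\in\{-1,1\}^K$ then consists of exactly two terms. For $\epsilon_1=+1$ the prefactor $(q^{2g+1})^{\frac12(\alpha-\alpha)}$ equals $1$, the gamma-type factor is $X\left(\tfrac12+\tfrac{\alpha-\alpha}{2}\right)=X\left(\tfrac12\right)=q^{-\frac12+\frac12}=1$, the product $\prod_{1\le j\le k\le K}$ reduces to the single factor corresponding to $(j,k)=(1,1)$ while $\prod_{1\le q<r\le Q}$ is empty, so that $Y_{\mathcal{U}}(\alpha;\gamma)=\zeta_{\mathbb{A}}(1+2\alpha)/\zeta_{\mathbb{A}}(1+\alpha+\gamma)$. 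For $\epsilon_1=-1$ the prefactor is $(q^{2g+1})^{\frac12(-\alpha-\alpha)}=(q^{2g+1})^{-\alpha}$, the gamma-type factor is $X\left(\tfrac12+\alpha\right)$ because $\tfrac{\alpha_1-\epsilon_1\alpha_1}{2}=\alpha$, and $Y_{\mathcal{U}}(-\alpha;\gamma)=\zeta_{\mathbb{A}}(1-2\alpha)/\zeta_{\mathbb{A}}(1-\alpha+\gamma)$. Adding the two contributions reproduces exactly the right-hand side displayed in Conjecture 8.1, with $A_{\mathcal{U}}(\pm\alpha;\gamma)$ still written as an Euler product.

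It remains to put $A_{\mathcal{U}}(\alpha;\gamma)$ into the claimed closed form when $K=Q=1$. For this I would use Corollary 7.3: the numerator $\prod_{1\le j\le k\le1}(1-|P|^{-(1+2\alpha)})$ is the single factor $(1-|P|^{-(1+2\alpha)})$, the $q<r$ product is empty, the denominator is $(1-|P|^{-(1+\alpha+\gamma)})$, and the bracketed local term reads $\frac{1}{1+|P|^{-1}}\left(\frac12\,\frac{1-|P|^{-1/2-\gamma}}{1-|P|^{-1/2-\alpha}}+\frac12\,\frac{1+|P|^{-1/2-\gamma}}{1+|P|^{-1/2-\alpha}}+|P|^{-1}\right)$. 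Setting $x=|P|^{-1/2-\alpha}$ and $y=|P|^{-1/2-\gamma}$, the half-sum of the two fractions collapses to $(1-xy)/(1-x^2)$, which cancels the factor $(1-|P|^{-(1+2\alpha)})=1-x^2$; then, using $\frac{1}{1+|P|^{-1}}=\frac{|P|}{|P|+1}$ together with $x^2=|P|^{-(1+2\alpha)}$ and $|P|\,xy=|P|^{-\alpha-\gamma}$, a short manipulation turns the local factor into $1-\frac{1}{|P|^{1+2\alpha}(|P|+1)}-\frac{1}{|P|^{\alpha+\gamma}(|P|+1)}$. Multiplying over $P$ and keeping the factor $(1-|P|^{-(1+\alpha+\gamma)})^{-1}$ produces precisely the stated expression for $A_{\mathcal{U}}(\alpha;\gamma)$, which one checks is absolutely convergent on the region in question.

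Finally I would justify the stated ranges of the parameters. The recipe underlying Conjecture 3.2 yields an expression that is meromorphic in $\alpha,\gamma$ and, a priori, is derived under $\Re(\alpha),\Re(\gamma)>0$; exactly as in the work of Conrey and Snaith in the number field setting, the resulting identity is then expected to persist after analytic continuation throughout $-\tfrac14<\Re(\alpha)<\tfrac14$, while the lower bound $\tfrac{1}{\log(q^{2g+1})}\ll\Re(\gamma)$ and the upper bound $\Re(\gamma)<\tfrac14$ keep the factors $\zeta_{\mathbb{A}}(1\pm\alpha+\gamma)^{-1}$ away from the pole of $\zeta_{\mathbb{A}}$ and make the error term $o(q^{2g+1})$ uniform in $g$, and $\Im(\alpha),\Im(\gamma)\ll(q^{2g+1})^{1-\epsilon}$ is the analogue of the classical restriction relative to the conductor $q^{2g+1}$. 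There is no genuinely difficult step here: the entire content is the $K=Q=1$ bookkeeping of Conjecture 3.2 and the Euler-product identity above, with the only point demanding care being the cancellation of $(1-|P|^{-(1+2\alpha)})$ and the absolute convergence of the remaining product.
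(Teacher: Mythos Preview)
Your proposal is correct and matches the paper's approach: the paper simply presents Conjecture~8.1 as the $K=Q=1$ specialization of Conjecture~3.2 (using the closed form of $A_{\mathcal{U}}$ from Corollary~7.3), without spelling out the bookkeeping or the Euler-product simplification you carry out. Your explicit verification that the local factor collapses to $1-\tfrac{1}{|P|^{1+2\alpha}(|P|+1)}-\tfrac{1}{|P|^{\alpha+\gamma}(|P|+1)}$ is exactly the computation the paper leaves to the reader.
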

\par\noindent
To obtain the one-level density, we note that
\begin{equation*}
    \sum_{u\in\mathcal{I}_{g+1}}\frac{L'\left(\frac{1}{2}+r,\chi_u\right)}{L\left(\frac{1}{2}+r,\chi_u\right)}=\frac{\partial}{\partial \alpha}R_{\mathcal{U}}(\alpha;\gamma)\Bigg|_{\alpha=\gamma=r}.
\end{equation*}
A direct calculation gives us that
\begin{equation*}
    \frac{\partial}{\partial \alpha}\frac{\zeta_{\mathbb{A}}(1+2\alpha)}{\zeta_{\mathbb{A}}(1+\alpha+\gamma)}\Bigg|_{\alpha=\gamma=r}=\frac{\zeta_{\mathbb{A}}'(1+2r)}{\zeta_{\mathbb{A}}(1+2r)}A_{\mathcal{U}}(r;r)+A_{\mathcal{U}}'(r;r)
\end{equation*}
and
\begin{align*}
    &\frac{\partial}{\partial\alpha}\left((q^{2g+1})^{-\alpha}X\left(\frac{1}{2}+\alpha\right)\frac{\zeta_{\mathbb{A}}(1-2\alpha)}{\zeta_{\mathbb{A}}(1-\alpha+\gamma)}A_{\mathcal{U}}(-\alpha;\gamma)\right)\Bigg|_{\alpha=\gamma=r}\\
    &=-(\log q)(q^{2g+1})^{-r}X\left(\frac{1}{2}+r\right)\zeta_{\mathbb{A}}(1-2r)A_{\mathcal{U}}(-r;r),
\end{align*}
where
\begin{equation*}
    A_{\mathcal{U}}(r;r)=1,
\end{equation*}
\begin{equation*}
    A_{\mathcal{U}}(-r;r)=\prod_P\left(1-\frac{1}{|P|}\right)^{-1}\left(1-\frac{1}{(|P|+1)|P|^{1-2r}}-\frac{1}{|P|+1}\right)
\end{equation*}
and
\begin{equation*}
    A_{\mathcal{U}}'(r,r)=\sum_P\frac{\log|P|}{(|P|^{1+2r}-1)(|P|+1)}.
\end{equation*}
Therefore the ratios conjecture implies the following holds.
\begin{thm}
Assuming Conjecture 8.1, $\frac{1}{\log(q^{2g+1})}\ll\Re(r)<\frac{1}{4}$ and $\Im(r)\ll_{\epsilon}(q^{2g+1})^{1-\epsilon}$, we have
\begin{align*}
    &\sum_{u\in\mathcal{I}_{g+1}}\frac{L'\left(\frac{1}{2}+r,\chi_u\right)}{L\left(\frac{1}{2}+r,\chi_u\right)}\\
    &=\sum_{u\in\mathcal{I}_{g+1}}\left(\frac{\zeta_{\mathbb{A}}'(1+2r)}{\zeta_{\mathbb{A}}(1+2r)}A_{\mathcal{U}}'(r;r)-(\log q)(q^{2g+1})^{-r}X\left(\frac{1}{2}+r\right)\zeta_{\mathbb{A}}(1-2r)A_{\mathcal{U}}(-r;r)\right)+o(q^{2g+1}).
\end{align*}
\end{thm}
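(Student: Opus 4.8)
The plan is to follow the approach of Conrey and Snaith \cite{Conrey2007}: express the sum of logarithmic derivatives as a partial derivative of the ratio sum $R_{\mathcal{U}}(\alpha;\gamma)$ and then differentiate the conjectural formula supplied by Conjecture 8.1. First I would record the elementary identity
\[
\frac{L'\left(\tfrac{1}{2}+r,\chi_u\right)}{L\left(\tfrac{1}{2}+r,\chi_u\right)}
=\left.\frac{\partial}{\partial\alpha}\frac{L\left(\tfrac{1}{2}+\alpha,\chi_u\right)}{L\left(\tfrac{1}{2}+\gamma,\chi_u\right)}\right|_{\alpha=\gamma=r},
\]
which, upon summing over $u\in\mathcal{I}_{g+1}$, gives $\sum_{u\in\mathcal{I}_{g+1}}\frac{L'}{L}\left(\tfrac{1}{2}+r,\chi_u\right)=\frac{\partial}{\partial\alpha}R_{\mathcal{U}}(\alpha;\gamma)\big|_{\alpha=\gamma=r}$. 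Substituting the right-hand side of Conjecture 8.1 and differentiating in $\alpha$ then reduces the statement to a calculation with the two summands of the conjectured main term.

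For the first summand, $A_{\mathcal{U}}(\alpha;\gamma)\,\zeta_{\mathbb{A}}(1+2\alpha)/\zeta_{\mathbb{A}}(1+\alpha+\gamma)$, I would apply the product rule together with the fact that at $\alpha=\gamma=r$ both the zeta-ratio and $A_{\mathcal{U}}(r;r)$ equal $1$; the latter because the local factor $(1-|P|^{-1-2r})^{-1}(1-\tfrac{1}{|P|^{1+2r}(|P|+1)}-\tfrac{1}{|P|^{2r}(|P|+1)})$ collapses to $1$ via the identity $\tfrac{1}{|P|^{1+2r}(|P|+1)}+\tfrac{1}{|P|^{2r}(|P|+1)}=|P|^{-1-2r}$. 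A direct computation of $\partial_\alpha\bigl(\zeta_{\mathbb{A}}(1+2\alpha)/\zeta_{\mathbb{A}}(1+\alpha+\gamma)\bigr)$ at $\alpha=\gamma=r$ yields $\zeta_{\mathbb{A}}'(1+2r)/\zeta_{\mathbb{A}}(1+2r)$, so this summand contributes $\zeta_{\mathbb{A}}'(1+2r)/\zeta_{\mathbb{A}}(1+2r)+A_{\mathcal{U}}'(r;r)$.

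For the second summand, $(q^{2g+1})^{-\alpha}X(\tfrac{1}{2}+\alpha)\,\zeta_{\mathbb{A}}(1-2\alpha)\,A_{\mathcal{U}}(-\alpha;\gamma)/\zeta_{\mathbb{A}}(1-\alpha+\gamma)$, the crucial point is that $\zeta_{\mathbb{A}}$ has its pole at $s=1$, so $1/\zeta_{\mathbb{A}}(1-\alpha+\gamma)=1-q^{\alpha-\gamma}$ vanishes at $\alpha=\gamma$. Hence, when differentiating the product, every contribution in which $\partial_\alpha$ does not fall on this factor is annihilated once we set $\alpha=\gamma=r$, and the single surviving term carries the factor $\partial_\alpha(1-q^{\alpha-\gamma})|_{\alpha=\gamma}=-\log q$; this produces $-(\log q)(q^{2g+1})^{-r}X(\tfrac{1}{2}+r)\zeta_{\mathbb{A}}(1-2r)A_{\mathcal{U}}(-r;r)$. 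I would then record the explicit Euler products needed: $A_{\mathcal{U}}(r;r)=1$ as above, $A_{\mathcal{U}}(-r;r)=\prod_P(1-|P|^{-1})^{-1}(1-\tfrac{1}{(|P|+1)|P|^{1-2r}}-\tfrac{1}{|P|+1})$ by setting $\alpha=-r$, $\gamma=r$ in Conjecture 8.1, and $A_{\mathcal{U}}'(r;r)=\sum_P\tfrac{\log|P|}{(|P|^{1+2r}-1)(|P|+1)}$ by logarithmically differentiating that Euler product factor by factor, each local factor of the logarithmic derivative simplifying to $\tfrac{\log|P|}{(|P|^{1+2r}-1)(|P|+1)}$. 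Summing the two contributions over $u\in\mathcal{I}_{g+1}$ gives the claimed formula.

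The one genuinely analytic point — and the step I expect to be the main obstacle — is to justify that the error term $o(q^{2g+1})$ in Conjecture 8.1 may be differentiated, i.e.\ that $\partial_\alpha$ commutes with the conjectural asymptotic. This is handled as in \cite{Conrey2007}: one invokes the hypotheses $\frac{1}{\log(q^{2g+1})}\ll\Re(r)<\frac14$ and $\Im(r)\ll_\epsilon(q^{2g+1})^{1-\epsilon}$ and applies Cauchy's integral formula for the derivative over a small circle about $\alpha=r$ contained in the region of validity of Conjecture 8.1, on which the error bound persists, so that the derivative of the error is again $o(q^{2g+1})$. Everything else is routine bookkeeping with $X(s)=q^{-1/2+s}$ and the two one-variable Euler products above.
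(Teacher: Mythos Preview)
Your proposal is correct and follows essentially the same route as the paper: both differentiate the conjectural formula for $R_{\mathcal{U}}(\alpha;\gamma)$ at $\alpha=\gamma=r$, use $A_{\mathcal{U}}(r;r)=1$ and the vanishing of $1/\zeta_{\mathbb{A}}(1-\alpha+\gamma)$ at $\alpha=\gamma$ to isolate the surviving terms, and record the same explicit Euler products for $A_{\mathcal{U}}(-r;r)$ and $A_{\mathcal{U}}'(r;r)$. Your write-up is in fact more detailed than the paper's (you justify the local-factor collapse and address via Cauchy's formula the differentiation of the $o(q^{2g+1})$ error, which the paper passes over); note also that the theorem as stated has a typographical slip, and your computed first contribution $\zeta_{\mathbb{A}}'(1+2r)/\zeta_{\mathbb{A}}(1+2r)+A_{\mathcal{U}}'(r;r)$ is the intended expression, consistent with the paper's derivation and with its subsequent use in Theorem~8.3.
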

\par\noindent
Now we are in a position to derive a formula for the one-level density for the zeros of quadratic Dirichlet L-functions in even characteristic, complete with lower order terms. Let $\chi_u$ denote the ordinate of a generic zero of $L(s,\chi_u)$ on the half-line. As $L(s,\chi_u)$ is a function in $q^{-s}$ and so is periodic with period $\frac{2\pi i}{\log q}$, we can confine our analysis of the zeros to $-\frac{\pi i}{\log q}\leq \Im(s)\leq \frac{\pi i}{\log q}$. So we consider the one-level density
\begin{equation*}
    S_1(f)=\sum_{u\in\mathcal{I}_{g+1}}\sum_{\gamma_u}f(\gamma_u),
\end{equation*}
where $f$ is a $\frac{2\pi}{\log q}$ periodic even test function and holomorphic. By Cauchy's argument principal, we have that
\begin{equation*}
    S_1(f)=\sum_{u\in\mathcal{I}_{g+1}}\left(\int_{(c)}-\int_{(1-c)}\right)\frac{L'(s,\chi_u)}{L(s,\chi_u)}f\left(-i\left(s-\frac{1}{2}\right)\right)ds,
\end{equation*}
where $(c)$ denotes a vertical line from $c-\frac{\pi i}{\log q}$ to $c+\frac{\pi i}{\log q}$ and $\frac{1}{2}+\frac{1}{\log(q^{2g+1})}<c<\frac{3}{4}$. The integral along the $(c)$-line is equal to
\begin{equation*}
    \frac{1}{2\pi}\int_{-\frac{\pi}{\log q}}^{\frac{\pi}{\log q}}f\left(-i\left(c+it-\frac{1}{2}\right)\right)\sum_{u\in\mathcal{I}_{g+1}}\frac{L'(c+it,\chi_u)}{L(c+it,\chi_u)}dt.
\end{equation*}
Moving the path of integration to $c=\frac{1}{2}$ as the integral is regular at $t=0$ and using Theorem 8.2, we get that the integral along the $(c)$-line is equal to
\begin{align}\label{eq:8.2}
    \frac{1}{2\pi}\int_{-\frac{\pi }{\log q}}^{\frac{\pi}{\log q}}f(t)\sum_{u\in\mathcal{I}_{g+1}}&\Bigg(\frac{\zeta_{\mathbb{A}}'(1+2it)}{\zeta_{\mathbb{A}}(1+2it)}+A_{\mathcal{U}}'(it;it)\nonumber\\
    &-(\log q)(q^{2g+1})^{-it}X\left(\frac{1}{2}+it\right)\zeta_{\mathbb{A}}(1-2it)A_{\mathcal{U}}(-it;it)\Bigg)dt.
    \end{align}
    For the integral along the $(1-c)$-line, we use the change of substitution $s\rightarrow 1-s$ and we use the functional equation
    \begin{equation*}
        \frac{L'(1-s,\chi_u)}{L(1-s,\chi_u)}=\frac{\mathcal{X}'_u(s)}{\mathcal{X}_u(s)}-\frac{L'(s,\chi_u)}{L(s,\chi_u)},
    \end{equation*}
    where
    \begin{equation*}
        \frac{\mathcal{X}'_u(s)}{\mathcal{X}_u(s)}=-\log(q^{2g+1})+\frac{X'(s)}{X(s)}.
    \end{equation*}
    Thus the integration along the $(1-c)$-line is equal to 
    \begin{align}\label{eq:8.3}
        &\frac{1}{2\pi}\int_{-\frac{\pi}{\log q}}^{\frac{\pi}{\log q}}f(t)\sum_{u\in\mathcal{I}_{g+1}}\Bigg(\log(q^{2g+1})+\frac{X'}{X}\left(\frac{1}{2}-it\right)\nonumber\\
        &+\left(\frac{\zeta_{\mathbb{A}}'(1+2it)}{\zeta_{\mathbb{A}}(1+2it)}+A_{\mathcal{U}}'(it;it)-(\log q)(q^{2g+1})^{-it}X\left(\frac{1}{2}+it\right)\zeta_{\mathbb{A}}(1-2it)A_{\mathcal{U}}(-it,it)\right)\Bigg)+o(q^{2g+1}). 
    \end{align}
    Combining (\ref{eq:8.2}) and (\ref{eq:8.3}), we obtain the following Theorem.
    \begin{thm}
    Assuming the ratios conjecture 8.1, the one-level density of quadratic Dirichlet L-functions in even characteristic is given by
    \begin{align}
        S_1(f)&=\sum_{u\in\mathcal{I}_{g+1}}\sum_{\gamma_u}f(\gamma_u)\nonumber\\
        &=\frac{1}{2\pi}\int_{-\frac{\pi}{\log q}}^{\frac{\pi}{\log q}}f(t)\sum_{u\in\mathcal{I}_{g+1}}\Bigg(\log(q^{2g+1})+\frac{X'}{X}\left(\frac{1}{2}-it\right)\nonumber\\
        &+2\left(\frac{\zeta_{\mathbb{A}}'(1+2it)}{\zeta_{\mathbb{A}}(1+2it)}+A_{\mathcal{U}}'(it;it)-(\log q)(q^{2g+1})^{-it}X\left(\frac{1}{2}+it\right)\zeta_{\mathbb{A}}(1-2it)A_{\mathcal{U}}(-it;it)\right)\Bigg)+o(q^{2g+1}),
    \end{align}
    where $\gamma_u$ is a generic zero of $L(s,\chi_u)$ and $f$ is a periodic and nice test function. 
    \end{thm}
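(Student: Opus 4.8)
\section*{Proof proposal for Theorem 8.3}

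The plan is to follow the approach of Conrey and Snaith \cite{Conrey2007}, using Theorem 8.2 as the arithmetic input and the functional equation as the source of the symmetry. First I would note that for $u\in\mathcal{I}_{g+1}$ the field $K_u$ is ramified imaginary, so the completed $L$-function coincides with $\mathcal{L}(z,\chi_u)$ itself, a polynomial of degree $2g$ in $z=q^{-s}$ with no trivial zeros; hence $\frac{L'(s,\chi_u)}{L(s,\chi_u)}$ has simple poles precisely at the $2g$ nontrivial zeros, with residue equal to the multiplicity, and these zeros lie on $\Re(s)=\tfrac12$ by the Riemann Hypothesis for function fields. Since $\frac{L'}{L}$ and $f$ are both periodic of period $\frac{2\pi i}{\log q}$, Cauchy's argument principle applied to a period rectangle (whose horizontal sides cancel by periodicity) gives
\[
\sum_{\gamma_u}f(\gamma_u)=\left(\int_{(c)}-\int_{(1-c)}\right)\frac{L'(s,\chi_u)}{L(s,\chi_u)}\,f\!\left(-i\left(s-\tfrac12\right)\right)ds,
\]
where $(c)$ is the vertical segment of length $\frac{2\pi}{\log q}$ through $c$ and $\tfrac12+\frac{1}{\log(q^{2g+1})}<c<\tfrac34$. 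Summing over the finite set $\mathcal{I}_{g+1}$ and exchanging sum with integral reduces the problem to evaluating $\sum_{u}\frac{L'}{L}$ on each of the two lines.

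On the $(c)$-line I would write $s=c+it$ and then shift the contour to $c=\tfrac12$. This is permissible because, by Theorem 8.2, $\sum_{u}\frac{L'(\frac12+it,\chi_u)}{L(\frac12+it,\chi_u)}$ equals, up to $o(q^{2g+1})$, a fixed meromorphic function of $t$ whose only apparent singularity in the period is at $t=0$, where the pole of $\frac{\zeta_{\mathbb{A}}'(1+2it)}{\zeta_{\mathbb{A}}(1+2it)}$ is exactly cancelled by that of $(\log q)(q^{2g+1})^{-it}X(\tfrac12+it)\zeta_{\mathbb{A}}(1-2it)A_{\mathcal{U}}(-it;it)$ (using $A_{\mathcal{U}}(0;0)=1$), so the function is in fact regular and no residue is picked up; the horizontal ends contribute nothing by the holomorphy and decay of $f$. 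After the shift, $f(-i(s-\tfrac12))=f(t)$, and substituting Theorem 8.2 gives (8.2). On the $(1-c)$-line I would substitute $s\mapsto 1-s$ and apply the functional equation $L(s,\chi_u)=\mathcal{X}_u(s)L(1-s,\chi_u)$ in logarithmic-derivative form,
\[
\frac{L'(1-s,\chi_u)}{L(1-s,\chi_u)}=\frac{\mathcal{X}'_u(s)}{\mathcal{X}_u(s)}-\frac{L'(s,\chi_u)}{L(s,\chi_u)},\qquad \frac{\mathcal{X}'_u(s)}{\mathcal{X}_u(s)}=-\log(q^{2g+1})+\frac{X'(s)}{X(s)},
\]
then shift to $c=\tfrac12$ and invoke Theorem 8.2 once more, using that $f$ is even so $f(-t)=f(t)$; this produces (8.3). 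Adding (8.2) and (8.3), the bracket $\frac{\zeta_{\mathbb{A}}'(1+2it)}{\zeta_{\mathbb{A}}(1+2it)}+A'_{\mathcal{U}}(it;it)-(\log q)(q^{2g+1})^{-it}X(\tfrac12+it)\zeta_{\mathbb{A}}(1-2it)A_{\mathcal{U}}(-it;it)$ appears once from each line, which is the origin of the factor $2$, while the terms $\log(q^{2g+1})$ and $\frac{X'}{X}(\tfrac12-it)$ come only from the $(1-c)$-line; this is exactly the claimed formula.

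The main obstacle is not any single manipulation but the uniform control of error terms needed to carry out the contour shift and the final integration: one must check that the $o(q^{2g+1})$ in Conjecture 8.1 (hence in Theorem 8.2) is uniform for the parameters in the relevant range — which is precisely why the hypotheses $\frac{1}{\log(q^{2g+1})}\ll\Re(r)<\tfrac14$ and $\Im(r)\ll_\epsilon(q^{2g+1})^{1-\epsilon}$ are imposed — so that after integrating against $f$ over the fixed-length period $\big[-\frac{\pi}{\log q},\frac{\pi}{\log q}\big]$ the error is still $o(q^{2g+1})$. The remaining points — the cancellation of the pole at $t=0$, the absence of trivial zeros in the argument-principle count (guaranteed since $K_u$ is ramified imaginary), and the legitimacy of moving the line of integration from $c$ to $\tfrac12$, which follows from the regularity of $f$ — are routine, so the substance of the argument is simply the combination of Theorem 8.2 with the functional equation.
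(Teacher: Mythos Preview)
Your proposal is correct and follows essentially the same approach as the paper: Cauchy's argument principle over a period rectangle, Theorem~8.2 on the $(c)$-line after shifting to $c=\tfrac12$, the functional equation on the $(1-c)$-line, and then adding (8.2) and (8.3). In fact you supply more justification than the paper does---the absence of trivial zeros for ramified imaginary $K_u$, the explicit cancellation of the pole at $t=0$, and the uniformity of the $o(q^{2g+1})$ error---all of which the paper either states without explanation or passes over silently.
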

\subsection{Scaled One-level density}
Defining 
\begin{equation*}
    f(t)=h\left(\frac{t(2g\log q)}{2\pi}\right)
\end{equation*}
and scale the variable $t$ form Theorem 8.3 as 
\begin{equation*}
    \tau=\frac{t(2g\log q)}{2\pi}.
\end{equation*}
We have that
\begin{align}\label{eq:8.5}
    &\sum_{u\in\mathcal{I}_{g+1}}\sum_{\gamma_u}f\left(\gamma_u\frac{2g\log q}{2\pi}\right)\nonumber\\
    &=\frac{1}{2g\log q}\int_{-g}^gh(\tau)\sum_{u\in\mathcal{I}_{g+1}}\Bigg(\log(q^{2g+1})+\frac{X'}{X}\left(\frac{1}{2}-\frac{2\pi i\tau}{2g\log q}\right)\nonumber\\
    &+2\Bigg(\frac{\zeta_{\mathbb{A}}'\left(1+\frac{4\pi i\tau}{2g\log q}\right)}{\zeta_{\mathbb{A}}\left(1+\frac{4\pi i\tau}{2g\log q}\right)}+A'_{\mathcal{U}}\left(\frac{2\pi i\tau}{2g\log q};\frac{2\pi i\tau}{2g\log q}\right)-(\log q)e^{-\frac{2\pi i\tau}{2g\log q}\log(q^{2g+1})}\nonumber\\
    &\times X\left(\frac{1}{2}+\frac{2\pi i\tau}{2g\log q}\right)\zeta_{\mathbb{A}}\left(1-\frac{4\pi i\tau}{2g\log q}\right)A_{\mathcal{U}}\left(-\frac{2\pi i\tau}{2g\log q};\frac{2\pi i\tau}{2g\log q}\right)\Bigg)\Bigg)d\tau+o(q^{2g+1}).
\end{align}
Writing
\begin{equation*}
    \zeta_{\mathbb{A}}(1+s)=\frac{1}{\log q}\frac{1}{s}+\frac{1}{2}+\frac{1}{12}(\log q)s+O(s^2),
\end{equation*}
we have
\begin{equation*}
    \frac{\zeta_{\mathbb{A}}'(1+s)}{\zeta_{\mathbb{A}}(1+s)}=-\frac{1}{s}+\frac{1}{2}\log q-\frac{1}{12}(\log q)^2s+O(s^3).
\end{equation*}
Therefore as $g\rightarrow\infty$, only $\log(q^{2g+1})$ term, the $\frac{\zeta'_{\mathbb{A}}}{\zeta_{\mathbb{A}}}$ term and the final term in the integral (\ref{eq:8.5}) contribute, yielding the asymptotic 
\begin{align*}
    &\sum_{u\in\mathcal{I}_{g+1}}\sum_{\gamma_u}f\left(\gamma_u\frac{2g\log q}{2\pi}\right)\\
    &\sim \frac{1}{2g\log q}\int_{-\infty}^{\infty}h(\tau)\left((\#\mathcal{I}_{g+1})\log(q^{2g+1})-(\#\mathcal{I}_{g+1})\frac{2g\log q}{2\pi i\tau}+(\#\mathcal{I}_{g+1})\frac{2g\log q}{2\pi i\tau}e^{-2\pi i \tau}\right)d\tau.
\end{align*}
Since $h$ is an even function, the middle term drops out and the last term can be duplicated with a change of sign, leaving
\begin{align*}
    &\lim_{g\rightarrow\infty}\frac{1}{\#\mathcal{I}_{g+1}}\sum_{u\in\mathcal{I}_{g+1}}\sum_{\gamma_u}f\left(\gamma_u\frac{2g\log q}{2\pi}\right)\\
    &=\int_{-\infty}^{\infty}h(\tau)\left(1+\frac{e^{-2\pi i\tau}}{4\pi i\tau}-\frac{e^{2\pi i \tau}}{4\pi i\tau}\right)d\tau\\
    &=\int_{-\infty}^{\infty}h(\tau)\left(1+\frac{1}{4\pi i\tau}\left(-(\cos(2\pi \tau)+i\sin(2\pi \tau))+(\cos(2\pi \tau)-i\sin(2\pi \tau))\right)\right)d\tau\\
    &=\int_{-\infty}^{\infty}h(\tau)\left(1-\frac{\sin(2\pi\tau)}{2\pi \tau}\right)d\tau.
    \end{align*}
    \textbf{Acknowledgement:} The author is grateful to the Leverhulme Trust (RPG-2017-320) for the support given during this research through a PhD studentship. The author would also like to thank Dr. Julio Andrade for suggesting this problem to me, as well as his useful advice during the course of the research.
\bibliographystyle{plain}
\bibliography{evencharacteristic}
Department of Mathematics, University of Exeter, Exeter, EX4 4QF, UK\\
\textit{E-mail Address:}jm1015@exeter.ac.uk
\end{document}